\newtheorem{theorem}{Theorem}[section]
\newtheorem{lemma}[theorem]{Lemma}
\newtheorem{corollary}[theorem]{Corollary}
\newtheorem{Definition}[theorem]{Definition}
\newtheorem{Remark}[theorem]{Remark}
\newtheorem{Example}[theorem]{Example}
\newcommand{{\ind}}{\operatorname{ind}}
\newcommand{{\ad}}{\operatorname{ad}}
\newcommand{{\card}}{\operatorname{Card}}
\newcommand{{\rk}}{\operatorname{rk}}
\newcommand{{\id}}{\operatorname{id}}
\newcommand{{\trp}}{\mathfrak{p}_{\pi', E}}
\newcommand{{\liea}}{\mathfrak{a}}
\newcommand{{\lieg}}{\mathfrak{g}}
\newcommand{{\liep}}{\mathfrak{p}}
\newcommand{{\lieh}}{\mathfrak{h}}
\newcommand{{\lien}}{\mathfrak{n}}
\newcommand{{\liel}}{\mathfrak{l}}
\newcommand{{\lieb}}{\mathfrak{b}}
\newcommand{{\liesl}}{\mathfrak{sl}}
\newcommand{{\bbN}}{\mathbb{N}}
\newcommand{{\bbC}}{\mathbb{C}}
\newcommand{{\ch}}{\operatorname{ch}}
\begin{document}

\keywords{Poisson centre, parabolic subalgebras, polynomiality, adapted pairs}

\title[Poisson centre of maximal parabolic subalgebras]
{Polynomiality for the Poisson centre of truncated maximal parabolic subalgebras.}

\author{Florence Fauquant-Millet, Polyxeni Lamprou }


\date{}

\address{Florence Fauquant-Millet\\
Universit\'e de Lyon\\
 Universit\'e Jean Monnet\\
 Saint-Etienne\\
 ICJ UMR 5208\\
  F-42023, Saint-Etienne\\
florence.millet@univ-st-etienne.fr
}
\address{Polyxeni Lamprou\\
polyxeni.lamprou@googlemail.com}

\def\a{\mathfrak a}
\def\g{\mathfrak g}
\def\h{\mathfrak h}
\def\p{\mathfrak p}
\def\n{\mathfrak n}
\def\b{\mathfrak b}
\def\l{\mathfrak l}
\def\o{\mathfrak o}
\def\ep{\varepsilon}
\def\al{\alpha}
\def\s{\mathfrak s}
\def\l{\mathfrak l}
\def\ep{\varepsilon}
\def\q{\mathfrak q}
\def\m{\mathfrak m}

\begin{abstract}
We study the Poisson centre of  truncated maximal parabolic subalgebras of a simple Lie algebra of type B, D or ${\rm E}_6$. In particular we show that this centre is a polynomial algebra and compute the degrees of its generators.
In roughly half of the cases the polynomiality of the Poisson centre was already known by a completely different method.
For the rest of the cases, our approach is to construct an {\it algebraic slice} in the sense of Kostant given by an {\it adapted pair} and the computation of an improved upper bound for the Poisson centre. 

\end{abstract}

\maketitle

\section{Introduction.}\label{intro}~~

The base field $k$ is assumed to be algebraically closed of characteristic zero.

In this paper we continue our study on the Poisson semicentre of maximal parabolic subalgebras of a simple Lie algebra over $k$, that we initiated in~\cite{FL}.

Let $\p$ be a parabolic subalgebra of a semisimple Lie algebra $\g$ over $k$.
Recall that the semicentre $Sy(\p)$ of the symmetric algebra $S(\p)$ of $\p$ is the vector space generated by the semi-invariants of $S(\p)$ under the adjoint action of $\p$. 
When $S(\p)$ is equipped with its natural Poisson structure, the semicentre $Sy(\p)$ of $\p$ coincides with the Poisson semicentre of $S(\p)$ (of $\p$ for short).
The algebra of invariants $S(\p)^{\p}$ of $S(\p)$ under the adjoint action of $\p$ will be denoted by $Y(\p)$. Again the algebra $Y(\p)$ coincides with the Poisson centre of $S(\p)$ (of $\p$ for short) when $S(\p)$ is equipped with its natural Poisson structure.
 By a result of \cite[Satz 6.1]{BGR}, since $\p$ is algebraic, there is a canonically defined algebraic subalgebra $\p_{\Lambda}$ of $\p$, called  the canonical truncation of $\p$, such that $Sy(\p)=Y(\p_{\Lambda}):=S(\p_{\Lambda})^{\p_{\Lambda}}$.
Actually $\p_{\Lambda}$ is the largest subalgebra of $\p$ which vanishes on the weights of $Sy(\p)$. One has also trivially that $Sy(\p_{\Lambda})=Y(\p_{\Lambda})$.

Recall that the Poisson centre $Y(\p)$ of $\p$ is reduced to $k$, when $\p$ is not equal to $\g$ and $\g$ simple (see for example~\cite[7.9]{J6} or~\cite[Chap. I, Sec. B, 8.2 (iv)]{F}), whereas the Poisson semicentre $Sy(\p)$ of $\p$ is never reduced to scalars by~\cite{D1}.

By~\cite{FJ2} - see also~\cite{J6},~\cite{J7} in the more general case of biparabolic (seaweed) subalgebras - we know that $Sy(\p)$
is lower and upper bounded, up to gradations, by polynomial algebras $\mathcal A$ and $\mathcal B$ respectively, having the same number of generators. The weight of each generator of $\mathcal A$  may either be equal or  be  the double of the weight of the corresponding generator of $\mathcal B$. Moreover, it was shown that the coincidence of the formal characters ${\rm ch}\,\mathcal A$ and ${\rm ch}\,\mathcal B$  of these bounds is a sufficient condition for the polynomiality of $Sy(\p)$. The coincidence of ${\rm ch}\,\mathcal A$ and ${\rm ch}\,\mathcal B$  occurs often, for instance when $\g$ is simple of type ${\rm A}$ or ${\rm C}$ and $\p$ is any parabolic subalgebra of $\g$.

However, the coincidence of ${\rm ch}\,\mathcal A$ and ${\rm ch}\,\mathcal B$  is not a necessary condition for the polynomiality of the Poisson semicentre and indeed there are examples where they do not coincide but the Poisson semicentre is polynomial, for example in the Borel case~\cite{J1}.

Since $Sy(\p_{\Lambda})=Y(\p_{\Lambda})$, the field $C(\p_{\Lambda}):=({\rm Fract}\, S(\p_{\Lambda}))^{\p_{\Lambda}}$ of invariant fractions of $S(\p_{\Lambda})$ is equal to the field of fractions ${\rm Fract}\,(Y(\p_{\Lambda}))$ of $Y(\p_{\Lambda})$,  as each semi-invariant of ${\rm Fract}\,S(\a)$ is a quotient of two semi-invariants of $S(\a)$, for any finite dimensional Lie algebra $\a$ by~\cite{D1} or~\cite[Chap. I, Sec. B, 5.11, 5.12]{F}. Hence the polynomiality of $Sy(\p)=Y(\p_{\Lambda})$ implies that the field of invariant fractions $C(\p_{\Lambda})$ is a purely transcendental extension of the base field $k$ and by~\cite[Thm. 66]{O} so is the field of invariant fractions $C(\p)$, since there exists a set of algebraically independent generators of $Sy(\p)$ formed by weight vectors, that is by semi-invariants of $S(\p)$. This allows us to answer positively Dixmier's fourth problem for such parabolic subalgebras, namely whether the field of invariant fractions is a purely transcendental extension of the base field, for any finite dimensional Lie algebra. However the polynomiality of the Poisson centre $Y(\p_{\Lambda})$ is a much stronger result.

Recently, several authors have been interested in the question of polynomiality of the Poisson centre of non-reductive algebraic Lie algebras; parabolic and biparabolic (seaweed) subalgebras of a simple Lie algebra $\g$ over $k$ were studied in~\cite{FJ2},~\cite{FJ3},~\cite{J6},~\cite{J7} and some particular semi-direct products were studied in~\cite{PPY1}, \cite{PPY2}, \cite{PPY3}, \cite{Y1}, \cite{Y2}, where polynomiality of the Poisson centre was shown. In~\cite{O} the author gives necessary and sufficient conditions for the Poisson centre or semicentre of certain finite dimensional Lie algebra  to be polynomial.

So far, only one counterexample to the polynomiality of the Poisson semicentre of a biparabolic subalgebra $\p$ is known, namely when $\g$ is of type ${\rm E}_8$ and $\p$ is the maximal parabolic subalgebra of $\g$, whose canonical truncation coincides with the centralizer of the highest root vector of $\g$~\cite{Y}.

In~\cite{FL} we studied $Sy(\p)$ for $\p$ a maximal parabolic subalgebra of a simple Lie algebra $\g$, when the lower and upper bounds ${\rm ch}\,\mathcal A$ and ${\rm ch}\,\mathcal B$ coincide (hence $Sy(\p)$ is polynomial) and we constructed slices for the coadjoint action, extending the Kostant Slice Theorem~\cite[Thm. 0.10]{K}.

In this paper we study the remaining cases for $\g$ simple of type ${\rm B,\, D}$ and ${\rm E_6}$ and we deduce the polynomiality of the Poisson semicentre $Sy(\p)$ by constructing slices for the coadjoint action and computing an ``improved upper bound'' (see below).

The slices we constructed in~\cite{FL} were given by {\it adapted pairs} (see Section~\ref{standardnotation}) for the canonical truncations $\p_{\Lambda}$ of the parabolic subalgebras $\p$ that we studied. 
In this paper we construct adapted pairs for the remaining cases mentioned above.

Adapted pairs play the role of principal $\mathfrak{sl}_2$-triples in the non-reductive case and were introduced in~\cite{JL}.
They give an improved upper bound $\mathcal{B}'$ for the character of $Sy(\mathfrak p)=Y(\p_{\Lambda})$ \cite{J6bis}. When this bound is attained, in particular when it coincides with the character of the lower bound $\mathcal{A}$ mentioned above, polynomiality of $Sy(\p)$ follows and the adapted pair gives an algebraic slice (in the sense of~\cite[7.6]{J8}) also called a Weierstrass section in~\cite{FJ4}, extending the Kostant Slice Theorem~\cite[Thm. 0.10]{K} to non-reductive Lie algebras.
By~\cite{FJ4}, this Weierstrass section is also an affine slice for the coadjoint action (in the sense of~\cite[7.3]{J8}).

Some particular cases had already been studied by other authors and different methods. For example, it was shown in~\cite{PPY} that for all maximal parabolic subalgebras $\p$ whose canonical truncation is the centralizer of the highest root vector of the simple Lie algebra (except in type ${\rm E}_8$, where we have Yakimova's counterexample), the Poisson semicentre $Sy(\p)$ is a polynomial algebra over $k$.

Furthermore, Heckenberger \cite{H} showed by computer calculations that in type ${\rm B}_n$, $2\le n\le 4$, the Poisson semicentre $Sy(\p)$ is polynomial for all parabolic subalgebras $\p$.

In~\cite{TY} an affine slice for the coadjoint action of $\p$ was constructed for some non truncated biparabolic subalgebras $\p$ of a simple Lie algebra, which gave a positive answer to Dixmier's fourth problem for $C(\p)$. These biparabolic subalgebras $\p$ do not coincide with the maximal parabolic subalgebras we are interested in.

Below, labeling of simple roots follows Bourbaki \cite[Planches I-IX]{BOU}.

Adapted pairs need not exist for all truncated parabolic subalgebras and are very hard to construct in general. One may hope to construct such pairs when the truncated Cartan subalgebra - that is, the subalgebra of the Cartan subalgebra, which is contained in the canonical truncation of the parabolic subalgebra we consider - is large enough, as it happens when $\g$ is of type $\rm A$ or when the parabolic subalgebra $\p$ is maximal; however, we showed that even in these favourable cases adapted pairs may not exist, as it happens for example when $\g$ is of type ${\rm F}_4$ and $\p$ is the maximal parabolic subalgebra corresponding to $\pi'=\{\alpha_1,\, \alpha_2,\,\alpha_4\}$ \cite[Sect. 10]{FL}. In type ${\rm A}$ adapted pairs were constructed for all truncated biparabolic subalgebras in~\cite{J5}.

When the parabolic subalgebra $\p$ is maximal associated to $\pi'=\pi\setminus\{\alpha_s\}$ where $\pi$ is a set of simple roots $\alpha_i,\, 1\le i\le n,$ in $\g$ and $\g$ is simple of type ${\rm B}_n$ or ${\rm D}_n$, the bounds ${\rm ch}\,\mathcal A$ and ${\rm ch}\,\mathcal B$ for $Sy(\p)$ coincide exactly when $s$ is odd (in type ${\rm D}_n$, $n\ge 4$, under the restriction $s\neq n-1$; additionally, when $s=n-1$ and $s$ even, and finally in type ${\rm D}_4$ for all $s$ except for $s=2$; in type ${\rm B}_n$, $n\ge 2$, also for $n=s=2$ and $n=s=4$).

In this paper we give an adapted pair for the rest of the truncated maximal parabolic subalgebras in type ${\rm B}$ and ${\rm D}$. In particular, we prove a lemma of non-degeneracy (Lemma~\ref{non-degeneracy}) which is a non-obvious generalization of ~\cite[Lemma 5]{FL}.

From the case ${\rm D}_6$, $s=6$, we also deduce in Section~\ref{E7} an adapted pair for the truncated maximal parabolic subalgebra of $\g$ of type ${\rm E}_7$ corresponding to $\pi'=\pi\setminus\{\alpha_3\}$.

Finally we construct in Section~\ref{secE6} an adapted pair for the truncated maximal parabolic subalgebras $\p_{\Lambda}$ in a simple Lie algebra of type ${\rm E}_6$, when the bounds $\mathcal A$ and $\mathcal B$ do not coincide, that is when $s=1,\,6$ (for $s=2$ an adapted pair was already constructed in~\cite{J6bis}).

Then we compute the improved upper bound $\mathcal B'$ (Lemmas~\ref{improvboundB},\,\ref{improvboundD},~\ref{improvboundDe} and Sections~\ref{E7} and~\ref{secE6}) and we show that it is attained and hence the Poisson centre $Y(\p_{\Lambda})$ of $\p_{\Lambda}$ is polynomial (Theorems~\ref{conB}, \ref{conD},~\ref{conDe},~\ref{conE7} and \ref{conE6}). We deduce that for all such maximal parabolic subalgebras $\p$, Dixmier's fourth problem is true for $C(\p)$.
Furthermore, as in~\cite{FL} we obtain an algebraic and an affine slice for the dual of $\p_{\Lambda}$.
\smallskip

{\bf Acknowledgements.}  We would  like to thank A. Joseph for many fruitful discussions on adapted pairs and for his interest in our work. We are also grateful to A. Ooms for enlightening exchange of ideas on the polynomiality of the Poisson semicentre. Part of these results were presented by the first author in the Seminar at the Weizmann Institute of Science in Israel in April 2016 and in the Conference ``Algebraic Modes of Representations and Nilpotent Orbits : the Canicular Days'', celebrating A. Joseph's 75th birthday, in Israel in July 2017 and by the second author in the Conference ``Representation Theory in Samos'' in Greece in July 2016.

\section{Preliminaries.}\label{standardnotation}

Let $\mathfrak{g}$ be a finite dimensional
semisimple Lie algebra over $k$ and $\mathfrak{h}$ a fixed
Cartan subalgebra of $\mathfrak{g}$.

Let $\Delta$ be the root system of
$\mathfrak{g}$ with respect to $\mathfrak{h}$, $\pi$ a chosen set of
simple roots, $\Delta^+$ (resp. $\Delta^-$)  the set of positive (resp. negative) roots. We adopt the labeling of \cite[Planches I-IX]{BOU} for the simple roots in $\pi$.

For any $\alpha\in \Delta$, let $\mathfrak g_\alpha$ denote the corresponding root space of $\mathfrak g$ and fix a nonzero vector $x_{\alpha}$ in $\g_{\alpha}$. Then $\mathfrak{g}
= \mathfrak{n}\oplus \mathfrak{h}\oplus \mathfrak{n}^-$, where
$\mathfrak{n}=\bigoplus_{\alpha\in \Delta^+}\mathfrak g_\alpha$ and $\mathfrak{n}^-=\bigoplus_{\alpha\in \Delta^-}\mathfrak g_\alpha$.
For all $\alpha\in\pi$, denote by $\alpha^\vee$ the corresponding coroot.
For any subset $A$ of $\Delta$, set $\g_A=\bigoplus_{\alpha\in A}\g_{\alpha}$.

For any subset $\pi^{\prime}$ of $\pi$, let
$\Delta_{\pi'}$ be the subset of roots in $\Delta$ generated by $\pi'$ and $\Delta_{\pi'}^+, \, \Delta_{\pi'}^-$ the sets of positive and negative roots in $\Delta_{\pi'}$ respectively.

One defines the standard parabolic
subalgebra $\mathfrak{p}_{\pi^{\prime}}$ associated to $\pi'$ to be the algebra $\mathfrak{p}_{\pi'} = \lien \oplus
\lieh \oplus \lien_{\pi'}^-$ where $\lien^-_{\pi'}=\bigoplus_{\alpha\in \Delta^-_{\pi'}}\g_\alpha$. Its opposed algebra then is $\mathfrak{p}_{\pi'}^- = \lien^- \oplus
\lieh \oplus \lien_{\pi'}$, with $\lien_{\pi'}$ defined similarly. The dual space $\mathfrak p_{\pi'}^*$ identifies with $\liep_{\pi'}^-$ via the Killing form $K$ on $\g$.

We denote by $W_{\pi'}$ the Weyl group associated to $\pi'$ and by $r_{\gamma}$, for $\gamma\in\Delta_{\pi'}$ the reflection with respect to $\gamma$. Then $W_{\pi'}$ is the subgroup of the Weyl group $W$ of $(\g,\,\h)$, generated by $r_{\gamma}$, for all $\gamma\in\Delta_{\pi'}$.

Let $\a$ be a finite dimensional Lie algebra over $k$. The semicentre $Sy(\a)$ of its symmetric algebra $S(\a)$ (of $\a$ for short) is defined to be the vector space spanned by the semi-invariants under the adjoint action of $\a$ that is, $Sy(\a)=\bigoplus_{\lambda\in\a^*}S(\a)_{\lambda}$ where $S(\a)_{\lambda}=\{s\in S(\a)\mid\forall x\in\a,\,(\ad x) s=\lambda(x) s\}$. It is a subalgebra of $S(\a)$. When $S(\a)_{\lambda}\neq\{0\}$, $\lambda$ is called a weight of the semicentre $Sy(\a)$. Let $\Lambda(\a)$ denote the set of weights of $Sy(\a)$. 

When $\a=\p_{\pi'}$, the set $\Lambda(\p_{\pi'})$ of weights of $Sy(\p_{\pi'})$ may be identified with a subset of $\h^*$ and we have also that $Sy(\p_{\pi'})$ is equal to the algebra of invariants $S(\p_{\pi'})^{\p_{\pi'}'}$ of $S(\p_{\pi'})$ under the adjoint action of the derived algebra $\p'_{\pi'}$ of $\p_{\pi'}$.
 
Equip $S(\a)$ with its natural Poisson structure coming from the Lie bracket on $\a$. The Poisson centre $Y(\a)$ of $\a$ is the centre of $S(\a)$ for this structure and it is also the set of the invariants in $S(\a)$ under the adjoint action of $\a$, that is $Y(\a)=S(\a)_0$. It is an algebra contained in the semicentre $Sy(\a)$ of $S(\a)$. Again $Sy(\a)$ is also the Poisson semicentre of $S(\a)$ for its natural Poisson structure.

If $\a$ is algebraic, there is an algebraic subalgebra of $\a$, called the canonical truncation of $\a$, $\a_{\Lambda}=\cap_{\lambda\in\Lambda(\a)} {\rm ker}\, \lambda$,  such that
$Sy(\a)=Sy(\a_{\Lambda})=Y(\a_{\Lambda})$ ~\cite[Satz 6.1]{BGR}. The algebra $\a_{\Lambda}$ is an ideal of $\a$ containing the derived subalgebra of $\a$.

The index of $\a$, denoted by $\ind{\a}$, is the minimal dimension of a stabilizer $\a^f$ for $f\in\a^*$. When $\a$ is algebraic, the index of $\a$ is also equal to the minimal codimension of a coadjoint orbit in $\a^*$
~\cite[1.11.3]{D}.

An element $y\in\a^*$ is called regular in $\a^*$ if  its stabilizer $\a^y$ is of minimal dimension (equal to $\ind\a$).

Let $\pi'\subset \pi$. Since $\p_{\pi'}$ is algebraic, the canonical truncation  $\p_{\pi',\,\Lambda}$ of $\p_{\pi'}$, which we recall is defined to be the largest subalgebra of $\p_{\pi'}$ that vanishes on the weights of $Sy(\p_{\pi'})$,
has the property that the Poisson centre $Y(\p_{\pi',\,\Lambda})$ is equal to the Poisson semicentre $Sy(\p_{\pi',\,\Lambda})$ and also equal to $Sy(\p_{\pi'})$.

The canonical truncation of $\p_{\pi'}$ was given explicitly in \cite{FJ3}. It is of the form $\liep_{\pi', \Lambda} = \n\oplus \h_{\Lambda}\oplus \n^-_{\pi'}$ where $\h_{\Lambda}$ is a subalgebra of $\h$ called the truncated Cartan subalgebra (this is the largest subalgebra of $\h$ which vanishes on the set of weights $\Lambda(\p_{\pi'})$ of $Sy(\p_{\pi'})$).

The Gelfand-Kirillov dimension of $Y(\p_{\pi',\,\Lambda})$ is equal to the index of $\p_{\pi',\,\Lambda}$. For more details, see~\cite[2.4, 2.5, B.2]{FJ3}.

Let $\h^{\prime}\subset\h$ be the Cartan subalgebra of the Levi factor of $\p_{\pi'}$. When $\pi'=\pi\setminus\{\alpha_s\}$, then $\h_{\Lambda}=\h^{\prime}$ that is, $\h_{\Lambda}$ is the vector space over $k$
generated by all $\alpha^{\vee}$ with $\alpha\in\pi'$.

For convenience, we replace the truncated parabolic subalgebra $\p_{\pi',\,\Lambda}$ by its opposed algebra $\p^-_{\pi',\,\Lambda}$ (that is, the canonical truncation of the opposed algebra $\p_{\pi'}^-$).
From now on, we denote it simply by $\p$.

For any $\h$-module $M=\bigoplus_{\nu\in\h^*} M_{\nu}$ with finite dimensional weight spaces $M_{\nu}:=\{m\in M\,\mid\,\forall\, h\in \h,\,h.m=\nu(h) m\}$, we may define its formal character by $\ch M=\sum_{\nu\in\h^*}\dim M_{\nu} e^{\nu}.$ Given two such  $\h$-modules $M$ and $M'$
write $\ch M\le \ch M'$ if $\dim\,M_{\nu}\le\dim\,M'_{\nu}$ for all $\nu\in\h^*$~\cite[2.8]{J7}.

Here we recall the formal characters $\ch\mathcal A$ and $\ch\mathcal B$ of the lower and the upper bounds  mentioned in the introduction for $\ch Y(\p)$ given in~\cite[Thm. 6.7]{J7}.

Let $E(\pi')$ be the set of $\langle {\bf ij}\rangle$-orbits of $\pi$, where ${\bf i}$ and ${\bf j}$ are the involutions of $\pi$ defined  for example in~\cite[2.2]{FL}. 
For the reader's convenience, we give below their definition.

Let $w_0$ be the longest element of the Weyl group $W$ of $(\g,\,\h)$ and $w_0'$ the longest element of the Weyl group $W_{\pi'}$.

For all $\alpha\in\pi$, we set ${\bf j}(\alpha)=-w_0(\alpha)$ and for all $\alpha\in\pi'$, we set ${\bf i}(\alpha)=-w_0'(\alpha)$. For $\alpha\in\pi\setminus\pi'$,  let $r\in\mathbb N$ be the smallest integer such that ${\bf j}({\bf i j})^r(\alpha)\not\in\pi'$. We set ${\bf i}(\alpha)={\bf j}({\bf i j})^r(\alpha)$.

By \cite[3.2]{FJ1} we have that ${\rm GKdim }\,Y(\p)=\ind{\p}=\lvert E(\pi')\rvert$.

Denote by $\{\varpi_{\alpha}\}_{\alpha\in\pi}$ (resp. $\{\varpi'_{\alpha}\}_{\alpha\in\pi'}$) the set of fundamental weights associated to $\pi$ (resp. to $\pi'$); the same sets sometimes are denoted by $\{\varpi_i\}_{\alpha_i\in\pi}$ and $\{\varpi'_i\}_{\alpha_i\in\pi'}$ respectively. Let $\mathcal B_{\pi}$ (resp. $\mathcal B_{\pi'}$) be the set of weights of the Poisson semicentre of $S(\n\oplus\h)$ (resp. $S(\n_{\pi'}\oplus\h')$): the weights of the generators of the Poisson semicentre of a Borel are listed in~\cite[Tables I and II]{J1} and \cite[Table]{FJ2} for an erratum.

For all $\Gamma\in E(\pi')$, set 
$\delta_{\Gamma}=-\sum_{\gamma\in\Gamma}\varpi_{\gamma}-\sum_{\gamma\in {\bf j}(\Gamma)}\varpi_{\gamma}+\sum_{\gamma\in\Gamma\cap\pi'}\varpi'_{\gamma}+\sum_{\gamma\in {\bf i}(\Gamma\cap\pi')}\varpi'_{\gamma}$

and $\varepsilon_{\Gamma}=\begin{cases}
1/2&{\rm if}\; \Gamma={\bf j}(\Gamma), \;{\rm and}\; \sum_{\gamma\in\Gamma}\varpi_{\gamma}\in\mathcal B_{\pi}, \;{\rm and}\;\sum_{\gamma\in\Gamma\cap\pi'}\varpi'_{\gamma}\in\mathcal B_{\pi'}.\\
 1& \;{\rm otherwise.}\\
\end{cases}$
It is shown in~\cite[Thm. 6.7]{J7} that

$${\rm ch}\,\mathcal A=\prod_{\Gamma\in E(\pi')}(1-e^{\delta_{\Gamma}})^{-1}\le \ch Y(\p)\le\prod_{\Gamma\in E(\pi')}(1-e^{\varepsilon_{\Gamma}\delta_{\Gamma}})^{-1}={\rm ch}\,\mathcal B.$$

In particular, if for all $\Gamma\in E(\pi')$, $\varepsilon_{\Gamma}=1$, the above inequalities are equalities and $Y(\p)$ is a polynomial algebra over $k$ by~\cite{FJ2}.

An adapted pair for $\p$ is a pair $(h,\,y)\in\h_{\Lambda}\times\p^*$ such that $y$ is regular in $\p^*$, and $(\ad h)\,y=-y$ where $\ad$ denotes the coadjoint action of $\p$ on $\p^*$.

Assume that there exists an adapted pair $(h,\,y)\in\h_{\Lambda}\times\p^*$ for $\p$. One may choose subsets $S,\,T\subset \Delta^+\sqcup\Delta^-_{\pi'}$ such that $y=\sum_{\gamma\in S}a_{\gamma}x_\gamma$, with $a_\gamma\in k\setminus \{0\}$ for all $\gamma\in S$, and $\p^*=(\ad\p)\,y\oplus\g_T$. Note that we may choose $T$ such that $\lvert T\rvert=\ind\p$. Assume further that $S_{\mid\h_{\Lambda}}$ is a basis for $\h_{\Lambda}^*$.
Then for each $\gamma\in T$ there exists a unique $t(\gamma)\in\mathbb Q S$ such that $\gamma+t(\gamma)$ vanishes on $\h_{\Lambda}$.

By~\cite[Lem. 6.11]{J6bis}
$$\ch Y(\p)\le\prod_{\gamma\in T} (1-e^{-(\gamma+t(\gamma))})^{-1}=\mathcal B'$$
and we will call the right hand side $\mathcal B'$ an ``improved upper bound'' for $\ch Y(\p)$; in this work it is indeed always an improvement of the upper bound ${\rm ch}\,\mathcal B$ mentioned above.

Moreover by~\cite[Lem. 6.11]{J6bis} if the above lower bound  ${\rm ch}\,\mathcal A$ and this improved upper bound $\mathcal B'$ coincide then the restriction map gives an isomorphism of algebras $Y(\p)\simeq R[y+\g_T]$, where $R[y+\g_T]$ is the ring of polynomial functions on $y+\g_T$, isomorphic to
$S(\g_T^*)$. Hence $Y(\p)$ is a polynomial algebra over $k$ and $y+\g_T$ is an algebraic slice in the sense of~\cite[7.6]{J8}, also called a Weierstrass section in~\cite{FJ4} and by~\cite{FJ4} it is also an affine slice in the sense of~\cite[7.3]{J8} for the coadjoint action of the adjoint group of $\p$ on $\p^*$.

Assume that there exists an adapted pair $(h,\,y)$ for $\p$ and denote by $V$ an $h$-stable complement of $(ad\,\p)\,y$ in $\p^*$.
Assume further that $Y(\p)$ is a polynomial algebra and let $f_1,\ldots,\,f_l$ be homogeneous generators for $Y(\p)$ ($l=\ind\p$). Then by~\cite[Cor. 2.3]{JS}
if $m_1,\ldots,\,m_l$ are the eigenvalues of $h$ on an $h$-stable basis of $V$, one has that $\deg\,f_i=m_i+1$ for all $1\le i\le l$, up to a permutation of indices.

\section{A lemma of regularity.}\label{regularity}

Keep the notations of the previous section and let $f\in\g$ and $\Phi_f:\;\g\times\g\longrightarrow k$ be the skew-symmetric bilinear form on $\g$ defined by $\Phi_f(x,\,x')=K(f,\,[x,\,x'])$.
Here we recall (\cite[Def. 2]{FL}) the definition of a Heisenberg set, of centre $\gamma\in\Delta$. It is a subset $\Gamma_{\gamma}$ of $\Delta$ such that $\gamma\in\Gamma_{\gamma}$ and for all $\alpha\in\Gamma_{\gamma}\setminus\{\gamma\}$, there exists a (unique) $\alpha'\in\Gamma_{\gamma}\setminus\{\gamma\}$ such that $\alpha+\alpha'=\gamma$.

\begin{Example}\label{MaxHeisenbergset}\rm

Set $\Delta=\sqcup\Delta_i$ where $\Delta_i$ is an irreducible root system and let $\beta_i$ be the unique highest root of $\Delta_i$.

Take $(\Delta_i)_{\beta_i}:=\{\alpha\in \Delta_i\,|\, (\alpha, \beta_i)=0\}$ and decompose it into irreducible root systems $\Delta_{ij}$ with highest roots $\beta_{ij}$.

Continuing we obtain a set of strongly orthogonal positive roots $\beta_K$, called the {\it Kostant cascade}, indexed by elements $K\in \mathbb N\cup \mathbb N^2\cup\cdots.$

The sets $H_{\beta_K}:=\{\alpha\in \Delta_K\,|\, (\alpha, \beta_K)>0\}$ are Heisenberg sets of centre $\beta_K$.
They are maximal Heisenberg sets, among the Heisenberg sets which are included in $\Delta^+$.

\end{Example}

Let $\p_{\pi',\,\Lambda}=\n\oplus\h_{\Lambda}\oplus\n^-_{\pi'}\simeq(\p_{\pi',\,\Lambda}^-)^*$ be the truncated parabolic subalgebra of $\g$ associated to $\pi'\subset \pi$. Let $S$ be a subset of $\Delta^+\sqcup\Delta^-_{\pi'}$ and for all $\gamma\in S$ choose a Heisenberg set $\Gamma_{\gamma}$ of centre $\gamma$ in $\Delta^+\sqcup\Delta^-_{\pi'}$. Assume that the sets $\Gamma_{\gamma}$ are disjoint and set $\Gamma=\bigsqcup_{\gamma\in S}\Gamma_{\gamma}$ and
$y=\sum_{\gamma\in S}a_{\gamma}x_{\gamma}\in\p_{\pi',\,\Lambda}$, where $a_{\gamma}\in k\setminus\{0\}$ for all $\gamma\in S$. Set $O=\bigsqcup_{\gamma\in S}\Gamma_{\gamma}^0$, with $\Gamma_{\gamma}^0=\Gamma_{\gamma}\setminus\{\gamma\}$, and $\o=\g_{-O}$.

The lemma below follows exactly like~\cite[Lem. 6]{FL}.\smallskip

\begin{lemma}\label{refineregular} Retain the above notations and hypotheses and assume further that
\begin{enumerate}
\item[(i)] The restriction of $\Phi_y$ to $\o\times\o$ is non-degenerate.
\item[(ii)] $S_{\mid\h_{\Lambda}}$ is a basis for $\h_{\Lambda}^*$.
\item[(iii)] $\lvert T\rvert=\ind\p_{\pi',\,\Lambda}$, where $T=(\Delta^+\sqcup\Delta^-_{\pi'})\setminus\Gamma$.
\end{enumerate}
Then $\p_{\pi',\,\Lambda}=(\ad \p_{\pi',\,\Lambda}^-)\,y\oplus\g_T$, where $\ad$ denotes the coadjoint action. In particular, $y$ is regular in $\p_{\pi',\,\Lambda}$. Moreover, if we uniquely define $h\in \h_{\Lambda}$ by the relations $\gamma(h)=-1$ for all $\gamma\in S$, then $(h,\,y)$ is an adapted pair for $\p_{\pi',\,\Lambda}^-$.
\end{lemma}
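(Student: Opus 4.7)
The plan is to follow the argument of~\cite[Lem.~6]{FL} almost verbatim. Write $\p:=\p_{\pi',\,\Lambda}^-$ and identify $\p^*\simeq\p_{\pi',\,\Lambda}$ via the Killing form $K$. Consider the coadjoint map $\varphi\colon\p\to\p^*$, $x\mapsto(\ad x)\,y$, whose kernel is the stabilizer $\p^y$ of $y$ and which, via the identification, realises the radical of the bilinear form $\Phi_y$ on $\p\times\p$. Decompose $\p=\o\oplus\g_{-S}\oplus\h_{\Lambda}\oplus\g_{-T}$ and set $M=\o\oplus\g_{-S}\oplus\h_{\Lambda}$; by construction $M$ is exactly the annihilator of $\g_T\subset\p^*$ under the natural pairing between $\p$ and $\p^*$. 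The strategy is to establish the triviality of $M\cap\p^y$, deduce $(\ad\p)\,y+\g_T=\p^*$ by duality, and then appeal to the dimensional bound from~(iii).

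The central step is to prove $M\cap\p^y=\{0\}$, equivalently that $\Phi_y|_{M\times M}$ is non-degenerate. Let $x=x_{\o}+x_{-S}+x_{\h}\in M$ satisfy $\Phi_y(x,z)=0$ for all $z\in M$. Testing with $z=h'\in\h_{\Lambda}$: $[x,h']\in\g_{-O}\oplus\g_{-S}$, and only the $\g_{-S}$-part pairs non-trivially with $y\in\g_S$ (since $O\cap S=\emptyset$); writing $x_{-S}=\sum_{\gamma\in S}c_{\gamma}x_{-\gamma}$, the vanishing reduces to $\sum_{\gamma\in S}a_{\gamma}c_{\gamma}K(x_{\gamma},x_{-\gamma})\,\gamma(h')=0$ for every $h'\in\h_{\Lambda}$, and hypothesis~(ii) forces $x_{-S}=0$. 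Testing next with $z=x_{-\alpha}$ for $\alpha\in O$: the $x_{\h}$-contribution $[x_{\h},x_{-\alpha}]\in\g_{-\alpha}$ vanishes under $K(y,\cdot)$ because $\alpha\notin S$, leaving $\Phi_y(x_{\o},x_{-\alpha})=0$ for every $\alpha\in O$; hypothesis~(i) then yields $x_{\o}=0$. Finally, testing with $z=x_{-\gamma'}$ for $\gamma'\in S$ reduces the condition to $\gamma'(x_{\h})\,a_{\gamma'}K(x_{\gamma'},x_{-\gamma'})=0$ for every $\gamma'\in S$, and hypothesis~(ii) once more gives $x_{\h}=0$.

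From $M\cap\p^y=\{0\}$ and the duality between $M$ and $\g_T$, one deduces $(\ad\p)\,y+\g_T=\p^*$; combined with the general inequality $\dim(\ad\p)\,y\le\dim\p-\ind\p$ and hypothesis~(iii), this gives $\dim(\ad\p)\,y=\dim\p-|T|$, so $y$ is regular and $\p^*=(\ad\p)\,y\oplus\g_T$. Hypothesis~(ii) makes $h\in\h_{\Lambda}$ with $\gamma(h)=-1$ for every $\gamma\in S$ well defined, and $(\ad h)\,y=\sum_{\gamma\in S}\gamma(h)\,a_{\gamma}x_{\gamma}=-y$, so $(h,y)$ is an adapted pair for $\p$. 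The main obstacle is the careful bookkeeping in the central step: the test vectors must be chosen in the right order so that contributions of the other components of $x$ vanish automatically, which works here only because the sets $O$, $S$ and $T$ are pairwise disjoint in $\Delta^+\sqcup\Delta_{\pi'}^-$ and the support of $y$ lies entirely in $S$; this is essentially the same pattern as in~\cite[Lem.~6]{FL}.
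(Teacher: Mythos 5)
Your proof is correct, and it is in substance the argument the paper invokes by simply citing \cite[Lem.~6]{FL}: the three pairings you compute (against $\h_{\Lambda}$, against $\g_{-O}$, against $\g_{-S}$), the use of (ii) twice and of (i) once, and the final dimension count with $\lvert T\rvert=\ind\p_{\pi',\,\Lambda}$ are exactly the ingredients of that proof. The only difference is one of packaging: you argue dually, showing that the radical of $\Phi_y$ on $M\times M$ (with $M=\o\oplus\g_{-S}\oplus\h_{\Lambda}$ the Killing-annihilator of $\g_T$) is zero and then deducing $(\ad\p_{\pi',\,\Lambda}^-)y+\g_T=\p_{\pi',\,\Lambda}$ by annihilators, whereas the paper's own version of this argument (see its proof of Lemma~\ref{regularE}, the refinement used in type ${\rm E}_6$) proceeds directly, showing $\g_O\subset(\ad\o)y+\g_S+\g_T$, $\g_S=(\ad\h_{\Lambda})y$ and $\h_{\Lambda}\subset(\ad\g_{-S})y+\g_O+\g_S+\g_T$ before the same dimension count. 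The two formulations are equivalent via the duality you state ($((\ad\p)y)^{\perp}=\p^{y}$ and $\g_T^{\perp}=M$), and your ordering of the test vectors correctly avoids the cross-terms (e.g.\ $[\g_{-S},\g_{-O}]$ possibly meeting $\g_{-S}$), which is the one place where care is genuinely needed.
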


\begin{Remark}\rm
In \cite{FL} adapted pairs for maximal parabolic subalgebras when both bounds ${\rm ch}\,\mathcal A$ and ${\rm ch}\,\mathcal B$ coincide were essentially obtained by taking part of the Kostant cascade for the set $S$  and for all $\gamma\in S\cap \Delta^+$, resp. $\gamma\in S\cap \Delta^-_{\pi'}$, the corresponding maximal Heisenberg set (\ref{MaxHeisenbergset}) in $\Delta^+$, resp. in $\Delta^+_{\pi'}$, for the Heisenberg set $\Gamma_{\gamma}$, resp. $-\Gamma_{-\gamma}$. Unfortunately  in the case of maximal parabolic subalgebras when the bounds ${\rm ch}\,\mathcal A$ and ${\rm ch}\,\mathcal B$ do not coincide, the restriction to $\h_{\Lambda}$ of the Kostant cascade does no more give a basis for $\h_{\Lambda}^*$.

\end{Remark}

\section{Stationary  roots.}\label{sr}
Keep the notations of Sections~\ref{standardnotation} and~\ref{regularity}.
Given $\gamma\in S$, for all $\alpha\in\Gamma_{\gamma}^0$ denote by $\alpha'$ the unique root in $\Gamma_{\gamma}^0$ such that $\alpha+\alpha'=\gamma$ and let $\theta_{\gamma}$ be the involution in $\Gamma_{\gamma}^0$ mapping $\alpha\in\Gamma_{\gamma}^0$ to $\alpha'$. Denote by $\theta$ the involution in $O$ induced by all $\theta_{\gamma},\, \gamma\in S$.

Clearly, the non-degeneracy of the restriction of $\Phi_y$ to $\o\times\o$ is immediate if, for all $\alpha\in O$, the only root $\beta$ in $O$ such that $\alpha+\beta\in S$ is $\beta=\theta(\alpha)$.

Unfortunately this will not be the case in general but  Lemma~\ref{non-degeneracy} below will give sufficient conditions for the non-degeneracy of the restriction of $\Phi_y$ to $\o\times\o$. To state this lemma, we need further notations. In particular
 for each root $\alpha\in O$, we set $S_{\alpha}=\{\beta\in O\mid\alpha+\beta\in S\}$ and for all $n\ge 1,\, O_n=\{\alpha\in O\,\mid\, \lvert S_{\alpha}\rvert =n\}$. Note that $O_1=\{\alpha\in O\mid \forall\, \beta\in O,\,\alpha+\beta\in S\Longrightarrow\beta=\theta(\alpha)\}$.

 Let $\alpha\in O$. Set $\alpha^0=\alpha$ and for all $i\in\mathbb N$ define $\alpha^{i}\in O$ inductively as follows. If $\theta(\alpha^{i})\in O_1$, set $\alpha^{i+1}=\alpha^{i}$. Otherwise, let $\alpha^{i+1}\neq \alpha^{i}$ be a root in $O$ such that $\alpha^{i+1}+\theta(\alpha^{i})\in S$. For all $i\in \mathbb N$, set $\alpha^{(i)}=\theta(\alpha)^i$. Note that, if $\alpha\in O_2$, then $\alpha^{(1)}$ is the only root in $O$ distinct from $\theta(\alpha)$ such that $\alpha^{(1)}+\alpha\in S$. Similarly if $\theta(\alpha)\in O_2$, then $\alpha^1$ is the only root in $O$ distinct from $\alpha$ such that $\alpha^1+\theta(\alpha)\in S$. Observe that $\theta(\alpha)^{(i)}=\alpha^i$.

We will say that $(\alpha^i)_{i\in\mathbb N}$ is {\it a} sequence of roots in $O$ constructed from $\alpha$; such a sequence always exists but in general is not unique. If for all $i\in\mathbb N$, $\theta(\alpha^i)\in O_1\sqcup O_2$, then $(\alpha^i)_{i\in\mathbb N}$ will be called {\it the} sequence of roots in $O$ constructed from $\alpha$, since in this case, $\alpha^i$ is uniquely defined, for all $i\in\mathbb N$. Note that if $\theta(\alpha^i)\in O_1$ for some $i\in \mathbb N$, then $\alpha^j=\alpha^i$ for all $j\ge i$. Conversely, if $\alpha^i=\alpha^{i+1}$, then $\theta(\alpha^i)\in O_1$ and $\alpha^j=\alpha^i$, for all $j\ge i$. We call a minimal such $i$ the rank of the sequence $(\alpha^j)_{j\in\mathbb N}$ and we say that the sequence is {\it stationary at rank $i$}. Note that if $\theta(\alpha^i)\not\in O_1$ then $\alpha^{i+1}\not\in O_1$.

Let $\alpha\in O$ and
set $A_{\alpha}=\{\alpha^i,\,\theta(\alpha^i)\mid i\in\mathbb N\}$ for a sequence $(\alpha^i)_{i\in\mathbb N}$ of roots in $O$ constructed from $\alpha$.

\begin{Remark} \rm \label{rem1}

Let $\alpha\in O$ and assume  that $A_{\alpha}\cup A_{\theta(\alpha)}\subset O_1\sqcup O_2$.

Let $i,\ j\in\mathbb N$.

(1) One has that $(\alpha^i)^j=\alpha^{i+j}$ and $(\alpha^{(i)})^j=\alpha^{(i+j)}$.\par

(2) Assume that $i\ge 1$ and that $\theta(\alpha^{i-1})\in O_2$. If $j\le i$, then $(\alpha^i)^{(j)}=\theta(\alpha^{i-j})$ and if $j\ge i$, then $(\alpha^i)^{(j)}=\alpha^{(j-i)}$.\par

(3) Assume that $i\ge 1$ and that $\theta(\alpha^{(i-1)})\in O_2$. If $j\le i$, then $(\alpha^{(i)})^{(j)}=\theta(\alpha^{(i-j)})$ and if $j\ge i$, then $(\alpha^{(i)})^{(j)}=\alpha^{j-i}$.\par

\end{Remark}

\begin{proof}
The definition of the roots $\alpha^i$ and $\alpha^{(i)}$ and an induction 
on $j$, noting that $\alpha^{i+1}=(\alpha^i)^1$ and that $\alpha^{(i+1)}=(\alpha^{(i)})^1$, give the assertions.
\end{proof}

\begin{Definition}\label{defsr}\rm
Let $\alpha\in O$.
We say that $\alpha$ is a {\it stationary root} if $A_{\alpha}\cup A_{\theta(\alpha)}\subset O_1\sqcup O_2$ and if the sequences $(\alpha^i)_{i\in\mathbb N}$ and $(\alpha^{(i)})_{i\in\mathbb N}$ are stationary.

The set of stationary roots in $O$ will be denoted by $O_{st}$.
\end{Definition}

\begin{Remark}\label{rem 2} \rm

 Let $\alpha\in O$.
 If $\alpha\in O_{st}$ then $A_{\alpha}\cup A_{\theta(\alpha)}\subset O_{st}$ and conversely if  $A_{\alpha}\cup A_{\theta(\alpha)}\subset O_1\sqcup O_2$ and if there exists $i_0\in\mathbb N$ such that $\alpha^{i_0}$ or $\alpha^{(i_0)}$ belongs to $O_{st}$, then $\alpha\in O_{st}$. 
\end{Remark}

\begin{proof}
This easily follows from (1), (2) and (3) of Remark~\ref{rem1}.
\end{proof}

\begin{lemma}\label{stat}
Let $\alpha\in O_{st}$. Let $\vartheta:O\to O$ be a permutation such that, for all $\gamma\in O$, $\gamma+\vartheta(\gamma)\in S$. Then the restriction of $\vartheta$ to $A_{\alpha}\cup A_{\theta(\alpha)}$ coincides with the involution $\theta$.
\end{lemma}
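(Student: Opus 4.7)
The plan is to encode the problem graph-theoretically. Let $G$ be the graph on vertex set $O$ in which $\beta$ and $\gamma$ are joined whenever $\beta+\gamma\in S$; then $\lvert S_\gamma\rvert=\deg_G(\gamma)$, so $O_j$ is the set of degree-$j$ vertices of $G$, the involution $\theta$ is a perfect matching of $G$, and the permutation $\vartheta$ sends each vertex to one of its $G$-neighbours. I will identify the induced subgraph $G|_A$ on $A:=A_\alpha\cup A_{\theta(\alpha)}$ explicitly and then read off $\vartheta|_A=\theta|_A$ from its shape.

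Because $\alpha\in O_{st}$, every vertex of $A$ has $G$-degree at most $2$. Using Remark~\ref{rem1}(2)--(3), I can pin down $S_\gamma$ for each $\gamma\in A$, and show that $S_\gamma$ itself lies in $A$: for $i\ge 1$, $S_{\alpha^i}=\{\theta(\alpha^i),\theta(\alpha^{i-1})\}$ when $\alpha^i\in O_2$ (since $(\alpha^i)^{(1)}=\theta(\alpha^{i-1})$), and $S_{\theta(\alpha^i)}=\{\alpha^i,\alpha^{i+1}\}$ when $\theta(\alpha^i)\in O_2$; the $\alpha^{(i)}$-side is symmetric; at $i=0$ the two chains meet via $S_\alpha=\{\theta(\alpha),\alpha^{(1)}\}$. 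Since the sequences $(\alpha^i)$ and $(\alpha^{(i)})$ are stationary at ranks $n$ and $m$ respectively, the ``far'' vertices $\theta(\alpha^n)$ and $\theta(\alpha^{(m)})$ lie in $O_1$. Assembling these pieces, $G|_A$ is a single path
\[\theta(\alpha^n)-\alpha^n-\theta(\alpha^{n-1})-\cdots-\theta(\alpha)-\alpha-\alpha^{(1)}-\theta(\alpha^{(1)})-\cdots-\alpha^{(m)}-\theta(\alpha^{(m)}),\]
and $\theta$ is the perfect matching that pairs each consecutive couple $(w_{2i},w_{2i+1})$ along this path, where I write $w_0,w_1,\ldots,w_{2k-1}$ for the vertices of the path in order, $w_0=\theta(\alpha^n)$.

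Since $\vartheta(\gamma)\in S_\gamma\subseteq A$ for every $\gamma\in A$ and $A$ is finite, $\vartheta$ restricts to a permutation of $A$. The endpoint $w_0\in O_1$ forces $\vartheta(w_0)=w_1$; the only alternatives for $\vartheta(w_1)$ are $w_0$ or $w_2$. I will then run a ``domino'' argument: if $\vartheta(w_1)=w_2$, injectivity of $\vartheta$ propagates the shift $\vartheta(w_i)=w_{i+1}$ along the whole path (at each step, the other candidate $w_{i-1}$ is already in the image of $w_{i-2}$), and this eventually collides with the forced value $\vartheta(w_{2k-1})=w_{2k-2}$ at the opposite $O_1$-endpoint, contradicting injectivity. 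Hence $\vartheta(w_1)=w_0$; iterating the same argument on the residual path $w_2-\cdots-w_{2k-1}$ gives $\vartheta|_A=\theta|_A$. The main obstacle I foresee is verifying the path structure rigorously, in particular ruling out that coincidences among the $\alpha^i$'s and $\alpha^{(j)}$'s collapse $G|_A$ into a shorter cycle or introduce extra edges; this should be excluded by the stationarity of both sequences (which yields genuine degree-$1$ endpoints and forbids cycles) together with the $O_1\sqcup O_2$ hypothesis (which bounds degrees), after which the domino induction is essentially automatic.
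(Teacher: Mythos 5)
Your proposal is, at its core, the paper's argument in graph-theoretic dress: both proofs pin $\vartheta$ down at the degree-one vertices $\theta(\alpha^{n_0}),\,\theta(\alpha^{(n_1)})\in O_1$ and then propagate along the two sequences using injectivity of $\vartheta$ together with the exact description of $S_\gamma$ for $\gamma\in A_\alpha\cup A_{\theta(\alpha)}$ coming from the definitions and Remark~\ref{rem1}. The one substantive difference is the global claim you interpose, namely that the induced graph on $A_\alpha\cup A_{\theta(\alpha)}$ is a simple path with pairwise distinct vertices; you flag it as the main obstacle and do not prove it, and your domino argument as phrased does lean on it. Two comments. First, the claim is true and can be closed exactly along the lines you indicate: the concatenated walk $\theta(\alpha^{n_0}),\alpha^{n_0},\theta(\alpha^{n_0-1}),\dots,\theta(\alpha),\alpha,\alpha^{(1)},\theta(\alpha^{(1)}),\dots,\theta(\alpha^{(n_1)})$ is non-backtracking (consecutive terms of each sequence differ below the rank, and $\alpha^{(1)}\neq\theta(\alpha)$, $\alpha^1\neq\alpha$ by definition), all its vertices lie in $O_1\sqcup O_2$, and a non-backtracking walk through vertices of degree at most two can only revisit a vertex by closing a cycle through its initial vertex, which is impossible here because the initial vertex $\theta(\alpha^{n_0})$ has degree one.

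Second, and this is what the paper exploits, the global distinctness is not actually needed. Organizing the propagation as the paper does (a decreasing induction giving $\vartheta(\theta(\alpha^i))=\alpha^i$ from $i=n_0$ down to $0$, then an increasing induction giving $\vartheta(\alpha^i)=\theta(\alpha^i)$, and the same for the primed sequence), at every step the unique wrong candidate for $\vartheta$ is a value already assigned to the adjacent term of the same sequence, so injectivity only requires $\alpha^i\neq\alpha^{i+1}$ (resp. $\alpha^{(i)}\neq\alpha^{(i+1)}$) for $i$ strictly below the rank, which is immediate from the definition of the rank. In fact, if you inspect your own domino, each exclusion only ever compares $w_i$ with $w_{i-2}$, and these distance-two pairs are exactly consecutive terms of one of the sequences (or the junction pairs $(\theta(\alpha),\alpha^{(1)})$ and $(\alpha,\theta(\alpha^{(1)}))$, distinct by definition). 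So you may either add the walk argument above or simply drop the unproven path-structure claim and run the local bookkeeping; with either fix your proof is complete, and essentially the same as the paper's.
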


\begin{proof}
Denote by $n_0$ (resp. $n_1$) the rank of the stationary sequence $(\alpha^i)_{i\in\mathbb N}$ (resp. $(\alpha^{(i)})_{i\in\mathbb N}$).

Since $\theta(\alpha^{n_0})\in O_1$ (resp. $\theta(\alpha^{(n_1)})\in O_1$) the map $\vartheta$ necessarily sends $\theta(\alpha^{n_0})$ (resp. $\theta(\alpha^{(n_1)})$) to $\alpha^{n_0}$ (resp. $\alpha^{(n_1)}$).

Then a decreasing induction on $i$ gives that, for all $0\le i\le n_0$,  we have $\vartheta(\theta(\alpha^i))=\alpha^i$.
Similarly we obtain that, for all $0\le i\le n_1$, $\vartheta(\theta(\alpha^{(i)}))=\alpha^{(i)}$.
An increasing induction on $i$ proves then that $\vartheta(\alpha^i)=\theta(\alpha^i)$ for all $0\le i\le n_0$
and that $\vartheta(\alpha^{(i)})=\theta(\alpha^{(i)})$ for all $0\le i\le n_1$. 
\end{proof}

\section{Cyclic roots}\label{Seccyc}

We also need to define what we have called a {\it cyclic root}. Recall the notations and hypotheses of Section~\ref{sr}.

\begin{Definition}\label{cyc}\rm

Let $\alpha\in O$. We say that $\alpha$ is {\it a cyclic root}  if there exist $\beta,\,\gamma\in O$ such that the following conditions are satisfied:
\begin{enumerate}
\item[(i)] $\theta(\alpha)+\gamma=\beta+\theta(\beta)$
\item[(ii)] $\theta(\gamma)+\beta=\alpha+\theta(\alpha)$
\item[(iii)] $\theta(\beta)+\alpha=\gamma+\theta(\gamma)$
\item[(iv)] $\{\alpha,\,\beta,\,\gamma,\,\theta(\alpha),\,\theta(\beta),\,\theta(\gamma)\}\subset O_2\sqcup O_3$
\item[(v)] $\lvert\{\alpha,\,\beta,\,\gamma,\,\theta(\alpha),\,\theta(\beta),\,\theta(\gamma)\}\rvert=6$
\item[(vi)] If $\delta\in\{\alpha,\,\beta,\,\gamma,\,\theta(\alpha),\,\theta(\beta),\,\theta(\gamma)\}\cap O_3$, then there exists  $\tilde \delta\in S_{\delta}$  such that $\tilde\delta\in O_2$ and $\theta(\tilde\delta)\in O_1$.
\end{enumerate}

The set of cyclic roots in $O$ is denoted by $O_{cyc}$.
\end{Definition}

\noindent For $\alpha\in O_{cyc}$, set $C_{\alpha}=\{\alpha,\,\beta,\,\gamma,\,\theta(\alpha),\,\theta(\beta),\,\theta(\gamma)\}$. Note that
for $\delta\in C_{\alpha}\cap O_3$ then, with the above notations, $\tilde\delta$ is unique and $S_\delta\setminus S_\delta\cap C_\alpha=\{\tilde\delta\}$.

\smallskip

\begin{Remark}\label{remcyc} \rm (1) If $\alpha\in O_{cyc}$ then all roots in $C_{\alpha}$ are cyclic roots.\par
(2) Suppose that $\alpha\in O_{cyc}$ and that $C_{\alpha}\subset O_2$. Then $\alpha\not\in O_{st}$. Indeed the cyclic relations (i), (ii) and (iii) imply that $\alpha^1=\gamma$, $\alpha^2=\beta$ and $\alpha^3=\alpha$, hence the sequence $(\alpha^i)_{i\in\mathbb N}$ is not stationary. \par
(3) Only two of conditions (i), (ii), (iii) above are necessary. Indeed any two of them imply the third one.\par
(4) With the above notations, let $\alpha\in O$ such that $\alpha=\tilde\beta$, with $\beta\in O_{cyc}\cap O_3$. Then $\alpha\not\in O_{st}$. Indeed since $\theta(\tilde\beta)\in O_1$, we have that $\alpha^1=\alpha$, but
since $\tilde\beta\in S_{\beta}$, one has that $\alpha^{(1)}=\beta$.

\end{Remark}
\smallskip

\begin{lemma}\label{permcyc}
Let $\vartheta:O\to O$ be a permutation such that for all $\gamma\in O$, $\gamma+\vartheta(\gamma)\in S$. Then $\vartheta$ exchanges $\tilde\delta$ and $\theta(\tilde\delta)$, where $\tilde\delta$ is the unique root in $S_{\delta}$ given by condition (vi) of Definition~\ref{cyc}, for any $\delta\in O_{cyc}\cap O_3$.
\end{lemma}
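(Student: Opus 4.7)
The plan is to exploit the fact that condition (vi) of Definition~\ref{cyc} forces $\theta(\tilde\delta)\in O_1$, which heavily constrains how any such permutation $\vartheta$ can act on $\theta(\tilde\delta)$. All other hypotheses in (vi), as well as the existence of the cyclic partners $\alpha,\beta,\gamma$, are irrelevant for this particular statement; the argument is a short bijection argument.

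The first step is to determine $S_{\theta(\tilde\delta)}$. By the definition of $O_1$ recalled in Section~\ref{sr}, a root $\mu\in O_1$ satisfies the property that the only $\nu\in O$ with $\mu+\nu\in S$ is $\nu=\theta(\mu)$. Applied to $\mu=\theta(\tilde\delta)$ and using that $\theta$ is an involution, this yields $S_{\theta(\tilde\delta)}=\{\tilde\delta\}$. Since $\vartheta(\theta(\tilde\delta))\in S_{\theta(\tilde\delta)}$ by the defining property of $\vartheta$, it follows that $\vartheta(\theta(\tilde\delta))=\tilde\delta$.

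The second step uses that $\vartheta$ is a bijection of the finite set $O$, hence has a unique preimage of every element. Let $\eta\in O$ be the unique element with $\vartheta(\eta)=\theta(\tilde\delta)$. Then $\eta+\theta(\tilde\delta)=\eta+\vartheta(\eta)\in S$, so $\eta\in S_{\theta(\tilde\delta)}=\{\tilde\delta\}$. Therefore $\eta=\tilde\delta$, and so $\vartheta(\tilde\delta)=\theta(\tilde\delta)$. Combined with the first step, this says that $\vartheta$ exchanges $\tilde\delta$ and $\theta(\tilde\delta)$, as required.

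There is no real obstacle: the only thing to be careful about is the identification $S_{\theta(\tilde\delta)}=\{\tilde\delta\}$, which follows directly from the definition of $O_1$ in Section~\ref{sr} and the fact that $\theta$ is an involution. The extra content of condition (vi), namely that $\tilde\delta\in O_2$, is not needed here (it will presumably be useful elsewhere, in order to identify the second element of $S_{\tilde\delta}$ and track the action of $\vartheta$ on the full cycle $C_\alpha$); for the present lemma only the one-element set $S_{\theta(\tilde\delta)}$ matters.
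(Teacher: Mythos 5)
Your proof is correct, and it takes a genuinely shorter route than the paper's in the second half. Both arguments begin identically: since $\theta(\tilde\delta)\in O_1$, any admissible image of $\theta(\tilde\delta)$ lies in $S_{\theta(\tilde\delta)}=\{\tilde\delta\}$, so $\vartheta(\theta(\tilde\delta))=\tilde\delta$. Where you diverge is in pinning down $\vartheta(\tilde\delta)$: the paper uses that $\tilde\delta\in O_2$ (so $\vartheta(\tilde\delta)\in\{\theta(\tilde\delta),\delta\}$) and then rules out $\vartheta(\tilde\delta)=\delta$ by chasing the constraint around the cyclic relations (i)--(iii), which forces $\delta$ to acquire two preimages; you instead invoke surjectivity directly: the unique preimage $\eta$ of $\theta(\tilde\delta)$ satisfies $\eta\in S_{\theta(\tilde\delta)}=\{\tilde\delta\}$, hence $\vartheta(\tilde\delta)=\theta(\tilde\delta)$. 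Your argument is sound (the identification $S_{\theta(\tilde\delta)}=\{\tilde\delta\}$ is exactly the definition of $O_1$ together with $\theta$ being an involution), it avoids both $\tilde\delta\in O_2$ and the cyclic relations, and it in fact establishes the more general statement that $\vartheta$ exchanges $\mu$ and $\theta(\mu)$ for every $\mu\in O_1$ — an observation that also streamlines the base step in the proof of Lemma~\ref{stat}. What the paper's longer route buys is only an explicit illustration of how the cyclic structure constrains $\vartheta$, which is the mechanism it exploits elsewhere (Lemma~\ref{non-degeneracy}); for the statement of Lemma~\ref{permcyc} itself your bijection argument suffices.
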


\begin{proof}

Denote by $\{\delta_1,\,\delta_2,\,\delta_3\}$ the set of cyclic roots $\{\alpha,\,\beta,\gamma\}$ in $O$ which satisfy the cyclic relations (i), (ii) and (iii) of Definition~\ref{cyc}. Assume that $\delta=\delta_1\in O_3$ and denote by $\tilde\delta=\tilde\delta_1$ the root in $O$ satisfying condition (vi) of Definition~\ref{cyc}.

Since $\theta(\tilde\delta)\in O_1$, we have necessarily that $\vartheta(\theta(\tilde\delta))=\tilde\delta$. Now since $\tilde\delta\in O_2$, we have that $\vartheta(\tilde\delta)=\theta(\tilde\delta)$ or $\delta$, since moreover $\tilde\delta\in S_{\delta}$.

Assume that $\vartheta(\tilde\delta)=\delta$. Then necessarily $\vartheta(\theta(\delta))=\delta_3$ by condition (i) of Definition~\ref{cyc}. Then $\vartheta(\theta(\delta_3))=\delta_2$ by condition (ii) of Definition~\ref{cyc}. But condition (iii) of Definition~\ref{cyc} implies then that $\vartheta(\theta(\delta_2))=\delta_1=\delta$ which is not possible, since $\delta$ has already a preimage by $\vartheta$.
\end{proof}

\section{A lemma of non-degeneracy.}

Let $S^+$ (resp. $S^-$) be the subset of $S$ containing those $\gamma\in S$ for which $\Gamma_\gamma\subset \Delta^+$ (resp. $\Gamma_\gamma\subset \Delta^-_{\pi'}$).

Let $S^m$ be the subset of $S$ containing those $\gamma\in S$, for which the Heisenberg set $\Gamma_{\gamma}$ contains {\bf both} positive and negative roots in $\Delta^+\sqcup\Delta^-_{\pi'}$.

We have $S=S^+\sqcup S^-\sqcup S^m$ and we set
$\Gamma^\pm=\bigsqcup_{\gamma\in S^\pm}\Gamma_{\gamma},\, \Gamma^m=\bigsqcup_{\gamma\in S^m}\Gamma_{\gamma}$; then $\Gamma=\Gamma^+\sqcup\Gamma^-\sqcup\Gamma^m$.

For all $\gamma\in S$, recall that $\Gamma_{\gamma}^0=\Gamma_{\gamma}\setminus\{\gamma\}$,
and set $O^\pm=\bigsqcup_{\gamma\in S^\pm}\Gamma_{\gamma}^0,\,O^m=\bigsqcup_{\gamma\in S^m}\Gamma_{\gamma}^0$; we have $O=O^+\sqcup O^-\sqcup O^m$.

Set also $\o^\pm=\g_{-O^\pm}$ and $\o^m=\g_{-O^m}$ so that $\o=\g_{-O}=\o^+\oplus\o^-\oplus\o^m$.

\begin{lemma}\label{non-degeneracy}

Assume  that:
\begin{enumerate}
\item $S_{\mid\h_{\Lambda}}$ is a basis for $\h_{\Lambda}^*$.
\item If $\alpha\in \Gamma_{\gamma}^0$, with $\gamma\in\ S^+$, then $S_{\alpha}\cap O^+=\{\theta(\alpha)\}$.

\item  If $\alpha\in \Gamma_{\gamma}^0$, with $\gamma\in\ S^-$, then $S_{\alpha}\cap O^-=\{\theta(\alpha)\}$.

 \item If $\alpha\in O$, with $S_{\alpha}\cap O^m\neq\emptyset$, then $\alpha\in O_{st}$ or $\alpha\in O_{cyc}$ or there exists $\beta\in O_{cyc}\cap O_3$ such that $\alpha=\tilde\beta$ or $\theta(\alpha)=\tilde\beta$, where $\tilde\beta$ is the unique root in $S_{\beta}$ such that $\tilde\beta\in O_2$ and $\theta(\tilde\beta)\in O_1$.
 \end{enumerate}
Then the restriction of the bilinear form $\Phi_y$ to $\o\times\o$ is non-degenerate.
\end{lemma}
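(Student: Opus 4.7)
\bigskip

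The plan is to reduce the non-degeneracy of $\Phi_y$ restricted to $\o\times\o$ to the invertibility of its Gram matrix in the root-vector basis $(x_{-\alpha})_{\alpha\in O}$, and then to a combinatorial analysis of permutations of $O$. For $\alpha,\beta\in O$ one has
$\Phi_y(x_{-\alpha},x_{-\beta})=K(y,[x_{-\alpha},x_{-\beta}])$, which vanishes unless $\alpha+\beta\in S$ and otherwise is a nonzero scalar multiple of $a_{\alpha+\beta}$, coming from the structure constants $N_{-\alpha,-\beta}$ and the Killing form pairing on $\g_{\alpha+\beta}\times\g_{-\alpha-\beta}$. Thus the $(\alpha,\beta)$-sparsity of the Gram matrix $M$ is governed exactly by the sets $S_\alpha$. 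Expanding $\det M$ (or Pfaffian, since $\Phi_y$ is skew-symmetric), the nonzero contributions come from the permutations $\vartheta$ of $O$ with $\vartheta(\alpha)\in S_\alpha$ for every $\alpha\in O$. Non-degeneracy reduces to verifying that the signed sum of these contributions is nonzero.

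The next step is to classify admissible permutations $\vartheta$ using the hypotheses. Conditions $(2)$ and $(3)$ say that within $O^+$ (resp. $O^-$) the only admissible pairing is $\alpha\leftrightarrow\theta(\alpha)$, so no extra permutation lives inside the pure sectors. Any deviation from $\theta$ must involve roots $\alpha$ with $S_\alpha\cap O^m\neq\emptyset$, and condition $(4)$ tightly restricts such $\alpha$: it lies in $O_{st}$, in $O_{cyc}$, or is of the form $\tilde\beta$ or $\theta(\tilde\beta)$ for some cyclic $\beta\in O_{cyc}\cap O_3$. Lemma~\ref{stat} then forces $\vartheta$ to agree with $\theta$ on every set $A_\alpha\cup A_{\theta(\alpha)}$ of stationary roots; Lemma~\ref{permcyc} forces $\vartheta$ to exchange $\tilde\delta$ and $\theta(\tilde\delta)$ for each $\delta\in O_{cyc}\cap O_3$. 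Consequently the pairing graph splits into connected components of three types: isolated $\theta$-edges $\{\alpha,\theta(\alpha)\}$ (from $O_1$ roots, stationary chains, and the forced $\tilde\delta$-pairings), and the cyclic hexagons $C_\alpha$ on which there remains more than one admissible matching.

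With this decomposition, the determinant factorizes over connected components. On each $\theta$-edge component the local contribution is a single nonzero monomial in the $a_\gamma$'s. The serious issue, which is the main technical obstacle, concerns each cyclic hexagon $C_\alpha$: the cyclic relations $(i)$, $(ii)$, $(iii)$ of Definition~\ref{cyc} permit exactly two perfect matchings on $C_\alpha$, namely the $\theta$-matching $\{(\alpha,\theta(\alpha)),(\beta,\theta(\beta)),(\gamma,\theta(\gamma))\}$ and the rotated matching $\{(\alpha,\theta(\beta)),(\gamma,\theta(\alpha)),(\beta,\theta(\gamma))\}$. A direct check shows both matchings yield the same monomial $a_{\delta_\alpha}a_{\delta_\beta}a_{\delta_\gamma}$ (with $\delta_\alpha=\alpha+\theta(\alpha)$, etc.), so their contributions differ only by signs and by the relevant products of structure constants $N_{-\mu,-\nu}$ of $\g$. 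One must therefore show that these two contributions do not cancel.

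The final step is precisely to verify this non-cancellation. The two matchings on $C_\alpha$ differ by a 6-cycle on the indexing set, so their Pfaffian signs are opposite, and the remaining task is to show that the two structure-constant products are not equal. This is where I expect the main difficulty to lie: one has to exploit the Jacobi identity together with the specific geometric configuration of $C_\alpha$ (conditions $(iv)$, $(v)$, $(vi)$) in the underlying root system of $\g$, much in the spirit of \cite[Lem. 5]{FL}, whose generalization this lemma is. Once the non-cancellation is established for each cyclic hexagon, the total $\det M$ is a nonzero monomial (times a nonzero scalar) in the $a_\gamma$'s, all of which are nonzero, so $\Phi_y|_{\o\times\o}$ is non-degenerate, completing the proof.
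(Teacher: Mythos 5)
Your reduction to admissible permutations and your use of Lemmas~\ref{stat} and~\ref{permcyc} to rigidify the pairings touching $O^m$ do match the paper's strategy, but there are two genuine gaps. First, your claim that ``any deviation from $\theta$ must involve roots $\alpha$ with $S_\alpha\cap O^m\neq\emptyset$'' is false: hypotheses (2) and (3) only control $S_\alpha\cap O^+$ for $\alpha\in O^+$ and $S_\alpha\cap O^-$ for $\alpha\in O^-$, and nothing in (2)--(4) forbids an admissible permutation pairing $\alpha\in O^+$ with some $\beta\in S_\alpha\cap O^-$ (no $O^m$ root is involved, so (4) is silent). Hence your decomposition of the pairing graph into $\theta$-edges and cyclic hexagons is unjustified, and these mixed $O^+/O^-$ pairings are exactly the terms the paper still has to kill. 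It does so with the one hypothesis you never use, namely (1): setting $z(t)=\sum_{\gamma\in S}t^{\lvert\rho(\gamma)\rvert}a_\gamma x_\gamma$, the linear independence of $S_{\mid\h_\Lambda}$ places all $z(t)$, $t\neq 0$, in a single $H_\Lambda$-orbit, so $d(t)=\det\bigl(\Phi_{z(t)\mid\o\times\o}\bigr)$ is a multiple of a single power of $t$; since a mixed pairing contributes $t^{\lvert\rho(\alpha+\beta)\rvert}$ with $\lvert\rho(\alpha+\beta)\rvert<\lvert\rho(\alpha)\rvert+\lvert\rho(\beta)\rvert$, while $\theta$-pairings are $\rho$-additive, such terms cannot occur in the monomial $d(t)$, whose top-degree coefficient is nonzero (cf.\ \cite[Lemma 8.5]{J5}). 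Without this (or some substitute) your signed sum over permutations is simply not under control.

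Second, the hexagon step you explicitly leave open (``one has to exploit the Jacobi identity\dots'') is not a peripheral verification but the remaining content of your argument at that point, so the proposal is incomplete there; the paper instead handles it by writing the $6\times 6$ block $\begin{pmatrix}0&B\\-B^{T}&0\end{pmatrix}$ explicitly and computing its determinant to be a nonzero multiple of $t^{2(\lvert\rho(s_1)\rvert+\lvert\rho(s_2)\rvert+\lvert\rho(s_3)\rvert)}$, the permutations outside $C_\alpha$ being pinned down by Lemma~\ref{permcyc} and condition (vi). Note also that your sign analysis is wrong: the two perfect matchings of $C_\alpha$ enter the Pfaffian with the \emph{same} sign ($\det B$ is the sum, not the difference, of the two matching monomials), so there is no sign-cancellation mechanism of the kind you describe, and the non-cancellation issue is resolved by the explicit computation (together with the $t$-homogeneity), not by a Jacobi-identity argument.
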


\begin{proof}

Let $\rho$ be the linear form on $\h^*$ defined by $\rho(\alpha)=1$ for all $\alpha\in\pi$
and define $z(t)=\sum_{\gamma\in S}t^{\lvert\rho(\gamma)\rvert}a_{\gamma}x_{\gamma}$ for $t\in k$.

Set $d(t)=\det({\Phi_{z(t)}}_{\mid\o\times\o})$ which is a polynomial in $t$ and let $H_{\Lambda}$ denote the adjoint group of $\h_{\Lambda}$.

Since by hypothesis (1) the elements of $S_{\mid\h_{\Lambda}}$ are linearly independent it follows that $z(ct_0)$ and $z(t_0)$ are in the same $H_{\Lambda}$-coadjoint orbit for $t_0\in k$ and for all $c\in k\setminus\{0\}$. Moreover $\o\times\o$ is stable under the adjoint action of $H_{\Lambda}$.
Then the degeneracy of the restriction of the bilinear form $\Phi_{z(t_0)}$ on $\o\times\o$ is equivalent to the degeneracy of the restriction of the bilinear form $\Phi_{z(ct_0)}$ on $\o\times\o$ for all $c\in k\setminus\{0\}$, that is, $d(t_0)=0$ is equivalent to $d(ct_0)=0$ for all $c\in k\setminus\{0\}$. It follows that either $d(t)$ is identically zero or it annihilates only at $t=0$. Hence $d(t)$ is a multiple of a single power of $t$ (see also \cite[Rem. 8.4]{J5}).

Let $\alpha\in O$ be such that $S_{\alpha}\cap O^m\neq\emptyset$.

Assume first that $\alpha\in O_{st}$. Then by lemma~\ref{stat}, the only factor involving $\alpha$  in $\det(\Phi_{z(t)_{\mid\o\times\o}})$ is $t^{2\lvert\rho(\alpha+\theta(\alpha))\rvert}$.

Assume now that $\alpha=\tilde\beta$, or $\theta(\alpha)=\tilde\beta$ with $\tilde\beta\in S_{\beta}$, $\beta\in O_{cyc}\cap O_3$, satisfying condition (vi) of Definition~\ref{cyc}. Then, by Lemma~\ref{permcyc}, in the expansion of the determinant of ${\Phi_{z(t)}}_{\mid\o\times \o}$ the only factor involving $\alpha$  is $t^{2\lvert\rho(\alpha+\theta(\alpha))\rvert}$.

Assume  that $\alpha\in O_{cyc}$ and consider $C_{\alpha}=\{\alpha,\,\beta,\,\gamma,\,\theta(\alpha),\,\theta(\beta),\,\theta(\gamma)\}$ verifying conditions (i)-(vi) of Definition~\ref{cyc}.
Then the matrix of $\Phi_{z(t)_{\mid{\g_{-C_{\alpha}}\times\g_{-C_{\alpha}}}}}$ is, up to a nonzero scalar, of the form

$$\begin{pmatrix}
0&0&0&t^{\lvert\rho(s_1)\rvert}&t^{\lvert\rho(s_3)\rvert}&0\\
0&0&0&0&t^{\lvert\rho(s_2)\rvert}&t^{\lvert\rho(s_1)\rvert}\\
0&0&0&t^{\lvert\rho(s_2)\rvert}&0&t^{\lvert\rho(s_3)\rvert}\\
-t^{\lvert\rho(s_1)\rvert}&0&-t^{\lvert\rho(s_2)\rvert}&0&0&0\\
-t^{\lvert\rho(s_3)\rvert}&-t^{\lvert\rho(s_2)\rvert}&0&0&0&0\\
0&-t^{\lvert\rho(s_1)\rvert}&-t^{\lvert\rho(s_3)\rvert}&0&0&0
\end{pmatrix}$$
where $s_1=\alpha+\theta(\alpha)$, $s_2=\beta+\theta(\beta)$ and $s_3=\gamma+\theta(\gamma)$.

Hence up to a nonzero scalar, $$\det(\Phi_{z(t)_{\mid{\g_{-C_{\alpha}}\times\g_{-C_{\alpha}}}}})=t^{2(\lvert\rho(s_1)\rvert+\lvert\rho(s_2)\rvert+\lvert\rho(s_3)\rvert)}=t^{2(\lvert\rho(\alpha+\theta(\alpha))\rvert+\lvert\rho(\beta+\theta(\beta))\rvert+\lvert\rho(\gamma+\theta(\gamma))\rvert)}$$
and by Lemma~\ref{permcyc}, it follows that the only factor involving $\alpha$  in $\det(\Phi_{z(t)_{\mid\o\times\o}})$ is $t^{2\lvert\rho(\alpha+\theta(\alpha))\rvert}$.

Let now $\alpha\in O^{\pm}$ such that $S_{\alpha}\cap O^{\mp}\neq\emptyset$ and $\beta\in S_{\alpha}\cap O^{\mp}$. By the above, if there is a factor in $\det(\Phi_{z(t)_{\mid\o\times\o}})$ which involves $\alpha$ and $\beta$, that is, if $t^{\lvert\rho(\alpha+\beta)\rvert}$ appears as a factor in $\det(\Phi_{z(t)_{\mid\o\times\o}})$, then necessarily
$S_{\alpha}\cap O^m=\emptyset$ and $S_{\theta(\alpha)}\cap O^m=\emptyset$.

Then observe that $\lvert\rho(\alpha+\beta)\rvert<\lvert\rho(\alpha)\rvert+\lvert\rho(\beta)\rvert,$ \par\noindent
${\rm whilst }\;\lvert\rho(\alpha+\theta(\alpha))\rvert=\lvert\rho(\alpha)\rvert+\lvert\rho(\theta(\alpha))\rvert\;{\rm and}\;\;\lvert\rho(\beta+\theta(\beta))\rvert=\lvert\rho(\beta)\rvert+\lvert\rho(\theta(\beta))\rvert.$

Since $d(t)$ is a multiple of a single power of $t$, the above observations and conditions (2) and (3) imply that $t^{\lvert\rho(\alpha+\beta)\rvert}$ cannot appear as a factor in $\det(\Phi_{z(t)_{\mid\o\times\o}})$. (See also the proof of~\cite[Lemma 8.5]{J5}).

Denote by $\widetilde {O}$ a choice of representatives in $O$ modulo the involution $\theta$. Then, up to a nonzero scalar,

$$\begin{array}{rll}
d(t)=\det(\Phi_{z(t)\mid\o\times\o})=
\displaystyle\prod_{\alpha\in\widetilde{ O}}t^{2\lvert\rho(\alpha+\theta(\alpha))\rvert}\\
\end{array}$$

Thus $\det(\Phi_{z(t)\mid\o\times\o})\neq 0$ for $t\neq 0$ and the assertion of the lemma follows.
\end{proof}

\begin{Remark} \rm If $S^m=\emptyset$ then condition (4) is empty and the above lemma is~\cite[Lemma 5]{FL}. \end{Remark}

By Lemmata~\ref{refineregular} and \ref{non-degeneracy} we obtain the following corollary.

\begin{corollary}\label{conc}
Assume that the hypotheses of the previous lemma hold and that $\lvert T\rvert=\ind\p_{\pi',\, \Lambda}$, where $T=(\Delta^+\sqcup\Delta^-_{\pi'})\setminus\Gamma$. Recall that $y=\sum_{\gamma\in S}a_{\gamma}x_{\gamma}$ and define $h\in\h_{\Lambda}$ by $\gamma(h)=-1$ for all $\gamma\in S$. Then $(h,\,y)$ is an adapted pair for $\p^-_{\pi',\,\Lambda}$.
\end{corollary}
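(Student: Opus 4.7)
The plan is to observe that the corollary is a direct synthesis of Lemma~\ref{refineregular} and Lemma~\ref{non-degeneracy}: the three hypotheses of Lemma~\ref{refineregular} are individually supplied by the assumptions of the corollary, and once they are checked the conclusion is immediate.

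First I would verify hypothesis (i) of Lemma~\ref{refineregular}, namely that the restriction of $\Phi_y$ to $\o\times\o$ is non-degenerate. This is exactly the conclusion of Lemma~\ref{non-degeneracy}. The four hypotheses of that lemma are precisely those postulated in the corollary: condition (1) on $S_{\mid\h_{\Lambda}}$ being a basis for $\h_{\Lambda}^*$, conditions (2) and (3) describing how $S_\alpha$ behaves on $O^+$ and $O^-$, and the structural condition (4) identifying elements of $O$ whose $S_\alpha$ meets $O^m$ with stationary roots, cyclic roots, or the distinguished roots $\tilde\beta$ from Definition~\ref{cyc}(vi). All of these are stipulated in the statement through the phrase ``hypotheses of the previous lemma hold'', so Lemma~\ref{non-degeneracy} applies directly and delivers the non-degeneracy of $\Phi_y\mid_{\o\times\o}$.

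Next I would check the remaining two hypotheses of Lemma~\ref{refineregular}. Hypothesis (ii), that $S_{\mid \h_{\Lambda}}$ is a basis of $\h_{\Lambda}^*$, is condition (1) of Lemma~\ref{non-degeneracy} and is thus again among the assumptions carried over. Hypothesis (iii), $\lvert T\rvert = \ind \p_{\pi',\Lambda}$ with $T = (\Delta^+\sqcup\Delta^-_{\pi'})\setminus \Gamma$, is the additional assumption stated explicitly in the corollary.

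With all three hypotheses in place, Lemma~\ref{refineregular} applies and yields both the decomposition $\p_{\pi',\Lambda}=(\ad \p^-_{\pi',\Lambda})\,y\oplus\g_T$ and the assertion that, for $h\in\h_\Lambda$ uniquely determined by $\gamma(h)=-1$ for all $\gamma\in S$, the pair $(h,y)$ is an adapted pair for $\p^-_{\pi',\Lambda}$. This is exactly the statement of the corollary. There is no substantive obstacle to overcome: all the real work has been done in Lemmata~\ref{refineregular} and \ref{non-degeneracy}, and the corollary merely packages them together for use in the subsequent sections where adapted pairs are constructed for the various maximal parabolics in types ${\rm B}$, ${\rm D}$, ${\rm E}_6$ and ${\rm E}_7$.
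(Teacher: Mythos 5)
Your proposal is correct and matches the paper exactly: the corollary is obtained precisely by feeding the conclusion of Lemma~\ref{non-degeneracy} into hypothesis (i) of Lemma~\ref{refineregular}, with the basis condition and the cardinality condition $\lvert T\rvert=\ind\p_{\pi',\,\Lambda}$ supplying hypotheses (ii) and (iii). Nothing further is needed.
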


In what follows, we construct adapted pairs for the truncated maximal parabolic subalgebras $\p$ in type ${\rm B}$ where the lower and upper bounds ${\rm ch}\,\mathcal A$ and ${\rm ch}\,\mathcal B$ of Section~\ref{standardnotation} do not coincide; $\p$ is associated to the subsystem $\pi'$ of $\pi$ obtained by suppressing a root of even index. The construction of an adapted pair in these cases is much more involved than in~\cite{FL}.

\section{Type B}\label{SecB}
In this section, $\g$ is a simple Lie algebra of type ${\rm B}_n$ ($n\ge 2$) and $\p=\p^-_{\pi',\,\Lambda}$ is the truncated maximal parabolic subalgebra associated to the subset $\pi'=\pi\setminus\{\alpha_s\}$ of $\pi$ with $s$ even ($2\le s\le n$). In this case the lower and the upper bounds for $\ch Y(\p)$, ${\rm ch}\,\mathcal A$ and ${\rm ch}\,\mathcal B$ of Section~\ref{standardnotation}, do not coincide~\cite[4.1]{FL} (except when $n=s=2$ or $n=s=4$, cases that we will however also consider in the following).

We will construct an adapted pair $(h,\,y)$ for $\p$, a slice for its coadjoint action and show that $Y(\p)$ is polynomial in $\ind\,\p$ generators. It will follow by the discussion in the introduction that the field $C(\p^-_{\pi'})$ of invariant fractions is a purely transcendental extension of $k$.

As we said above, it is enough to find sets $S,\,T$ that satisfy the conditions of Lemma \ref{non-degeneracy} and of Corollary~\ref{conc}.

Recall that the truncated Cartan subalgebra of $\p$ is the Cartan subalgebra of the Levi factor, namely $\h_{\Lambda}=\h'=\bigoplus_{1\le i\le n, i\neq s} k\,{\alpha_i}^{\vee}$.

Denote by $\{\varepsilon_i\,|\,1\le i\le n\}$ an orthonormal basis of $\mathbb R^n$ according to which
the simple roots $\alpha_i$ ($1\le i\le n$) of $\g$ are expanded as in~\cite[Planche II]{BOU}.
Then the Kostant cascade formed by the strongly orthogonal positive roots  $\beta_i$ and $\beta_{i'}$ is given in~\cite[Table I]{FL} or in~\cite[Table II]{J1}. Recall that $\beta_i=\varepsilon_{2i-1}+\varepsilon_{2i}$ for all $1\le i\le[n/2]$,  and for $n$ odd, $\beta_{(n+1)/2}=\varepsilon_n=\alpha_n$ and $\beta_{i'}=\alpha_{2i'-1}=\ep_{2i'-1}-\ep_{2i'}$ for all $1\le i'\le [n/2]$.
\smallskip

Set $S=S^+\sqcup S^-\sqcup S^m$ with the following subsets $S^\pm$ and $S^m$:
\begin{itemize}

\item If $n=s$, $S^+=\{\beta_i\mid 1\le i\le s/2-1\}$.
 \item If $n>s$, $S^+=\{\beta_i,\,\varepsilon_{s-1}+\varepsilon_{s+1},\,\varepsilon_{2j}+\varepsilon_{2j+1}\,\mid 1\le i\le s/2-1,\,s/2+1\le j\le [(n-1)/2]\}.$ 
 \item
 $S^-=\displaystyle\{\varepsilon_{s-i}-\varepsilon_i,\,-\beta_j=-\varepsilon_{2j-1}-\varepsilon_{2j}\,\mid 1\le i\le s/2-1,\,s/2+1\le j\le [n/2] \}.$
 \item
$S^m=\{\ep_s\}$. 
 \end{itemize}

Clearly, $S\subset\Delta^+\sqcup\Delta^-_{\pi'}$ and $\lvert S\rvert=n-1=\dim\h_{\Lambda}$. We first show below that condition $(1)$ of Lemma~\ref{non-degeneracy} holds.

\begin{lemma}
$S_{\mid\h_{\Lambda}}$ is a basis for $\h_{\Lambda}^*$.
\end{lemma}

\begin{proof}
It is sufficient to show that if $S=\{s_1,\ldots, s_{n-1}\}$ and $\{h_1,\ldots, h_{n-1}\}$ is a basis of $\h_{\Lambda}$, then $\det(s_i(h_j))_{i,j}\neq 0$. We will prove this statement by induction on $n$. We choose $\{\alpha_i^\vee\,|\,1\le i\le n,\, i\ne s\}$ as a basis of $\h_{\Lambda}$.

Add temporarily a lower subscript $n$ to $S^{\pm},\, \pi,\, \h'=\h_\Lambda$ to emphasize that they are defined for type ${\rm B}_n$ and observe that $S^m$ does not depend on $n$.

Identify an element $(x_1,x_2,\ldots,x_n)\in\mathbb R^n$ with the element $(x_1, x_2, \ldots, x_n,0)\in\mathbb R^{n+1}$. Observe that $S^+_{s+1}=S^+_{s}\sqcup\{\varepsilon_{s-1}+\varepsilon_{s+1}\}$, whereas for $n$ even and $n\ge s+2$ we have $S^+_{n+1}=S^+_{n}\sqcup\{\varepsilon_n+\varepsilon_{n+1}\}$ and for $n$ odd we have $S^+_{n+1}=S^+_{n}$.

Similarly for $n$ even, $S_{n+1}^-=S_n^-$ and for $n$ odd, $S^-_{n+1}=S^-_n\sqcup\{-\varepsilon_n-\varepsilon_{n+1}\}$. Finally set $S_n=S_n^+\sqcup S_n^-\sqcup S^m$.

We first consider the case $n=s$.

If $n=s=2$ then $S=\{s_1=\varepsilon_2=\alpha_2\}$ and  $\det\,(s_1(\alpha_1^\vee))=-1\neq 0$.

Assume now that $n\ge 4$ and $n=s$. Then $S=\{\varepsilon_n,\,\varepsilon_{n-i}-\varepsilon_i,\, \beta_j\,\mid\, 1\le i,\,j\le n/2-1\}$.
Recall that $\{\varpi_{i}\}_{1\le i\le n}$ is the set of fundamental weights of $\g$. One has that for all $i$, with $1\le i\le n/2-1$, $\beta_i=\varpi_{2i}-\varpi_{2i-2}$ (where we have set $\varpi_0=0$) and $\varepsilon_n=-\varpi_{n-1}+2\varpi_n$. Also, for all $i$, with $1\le i\le n/2-1$, $\varepsilon_{n-i}-\varepsilon_i=-\varpi_i+\varpi_{i-1}-\varpi_{n-1-i}+\varpi_{n-i}$.

Then, by ordering the basis of $\h_{\Lambda}$ as \par
$\{\alpha_2^{\vee},\,\alpha_4^{\vee},\ldots,\alpha_{n-2}^{\vee},\,\alpha_{n-1}^{\vee},\,\alpha_1^{\vee},\,\alpha_{n-3}^{\vee},\,\alpha_3^{\vee},\ldots, \alpha^\vee_{n/2+1},\alpha^\vee_{n/2-1}\}$ if $n/2$ is even,\par
and as
$\{\alpha_2^{\vee},\,\alpha_4^{\vee},\ldots,\alpha_{n-2}^{\vee},\,\alpha_{n-1}^{\vee},\,\alpha_1^{\vee},\,\alpha_{n-3}^{\vee},\,\alpha_3^{\vee},\ldots, \alpha^\vee_{n/2-2},\alpha^\vee_{n/2}\}$
if $n/2$ is odd, and by ordering elements of $S$ as $\{\beta_1,\,\beta_2,\ldots,\beta_{n/2-1},\varepsilon_n,\,\varepsilon_{n-1}-\varepsilon_1,\varepsilon_{n-2}-\varepsilon_2,\,\ldots,\varepsilon_{n/2+1}-\varepsilon_{n/2-1}\},$ we have that $(s_i(h_j))_{ij}=\begin{pmatrix} A&0\\
C&D\end{pmatrix}$
where $A$ is an $(n/2)\times (n/2)$ lower triangular matrix with $1$ everywhere on the diagonal, except the last element which is equal to $-1$ and $D$ is a $(n/2-1)\times (n/2-1)$ lower triangular matrix with $-1$ everywhere on the diagonal. Hence $\det(s_i(h_j))_{ij}=(-1)^{n/2}\neq 0$.

For every $n\ge s$, let $\{h_1,\,\ldots, h_{n-1},\,h_n\}$ be a basis for the truncated Cartan $\h'_{n+1}$ of the truncated parabolic associated to $\pi_{n+1}\setminus\{\alpha_s\}$ in type $B_{n+1}$, such that
$\{h_1,\,\ldots,\,h_{n-1}\}$ is a basis of the truncated Cartan $\h'_n$ for the truncated parabolic associated to $\pi_n\setminus\{\alpha_s\}$ in type ${\rm B}_n$ with the identification in the beginning of this proof.

Then, using the observation in the beginning of this proof, and ordering the elements of $S_{n+1}=\{s_1,\,s_2,\,,\ldots,\,s_n\}$ such that its first $n-1$ elements are those of $S_n$, we get that
$\det(s_i(h_j))_{1\le i,\,j\le n}=(-1)^n\det(s_i(h_j))_{1\le i,\,j\le n-1}$, which completes the proof of the lemma.
\end{proof}

Recall the (maximal in $\Delta^+$) Heisenberg set $H_{\beta_i}$ of centre $\beta_i$ defined in Example~\ref{MaxHeisenbergset} for every positive  root $\beta_i$ of the Kostant cascade.

\begin{itemize}
\item Set $\Gamma_{\ep_s}=\displaystyle\{\varepsilon_s,\,\varepsilon_i,\,\varepsilon_s-\varepsilon_i,\,\ep_s+\ep_j,\,-\ep_j\,\mid\, 1\le i\le n,\,i\ne s,\, s+1\le j\le n\}\subset\Delta^+\sqcup\Delta^-_{\pi'}.$
\item For all $i\in\mathbb N$, $1\le i\le s/2-1$, set $\Gamma_{\beta_i}=H_{\beta_i}\setminus\{\varepsilon_{2i-1}, \,\varepsilon_{2i}\}\subset\Delta^+.$
\item Set $\Gamma_{\varepsilon_{s-1}+\varepsilon_{s+1}}=\{\varepsilon_{s-1}+\varepsilon_{s+1},\,\ep_{s-1}\pm \ep_{i},\,\ep_{s+1}\mp \ep_i\,\mid \, s+2\le i\le n\}\subset\Delta^+.$
\item For all $i\in\mathbb N$, $s/2+1\le i\le[(n-1)/2]$, set $\Gamma_{\ep_{2i}+\ep_{2i+1}}=\{\ep_{2i}+\ep_{2i+1},\,\ep_{2i}\pm \ep_j,\,\ep_{2i+1}\mp \ep_j\,\mid\, 2i+2\le j\le n\}\subset\Delta^+.$
\item For all $i\in\mathbb N$, $1\le i\le s/2-1$, set $\Gamma_{\ep_{s-i}-\ep_i}=\{\ep_{s-i}-\ep_i,\,\varepsilon_j-\varepsilon_i,\,\varepsilon_{s-i}-\varepsilon_j\mid i+1\le j\le s-i-1\}\subset\Delta^-_{\pi'}.$
\item For all $i\in\mathbb N$, $s/2+1\le i\le [n/2]$, set
$\Gamma_{-\varepsilon_{2i-1}-\varepsilon_{2i}}=\{-\varepsilon_{2i-1}-\varepsilon_{2i},\,-\varepsilon_{2i-1}\pm \varepsilon_{j},\,-\ep_{2i}\mp \ep_j\,\mid\, 2i+1\le j\le n\}\subset\Delta^-_{\pi'}.$
\end{itemize}

By construction, the sets $\Gamma_{\gamma}$, $\gamma\in S$, are disjoint Heisenberg sets of centre $\gamma$, included in $\Delta^+\sqcup\Delta^-_{\pi'}$.

Denote by $\pi'_1$ the connected component of $\pi'$ of type ${\rm A}_{s-1}$ and $\pi'_2$ the connected component
of $\pi'$ of type ${\rm B}_{n-s}$. Observe that for all $i\in\mathbb N$, $1\le i\le s/2-1$, $\Gamma_{\ep_{s-i}-\ep_i}\subset\Delta^-_{\pi'_1}$ and for all $i\in\mathbb N$, 
$s/2+1\le i\le [n/2]$, $\Gamma_{-\ep_{2i-1}-\ep_{2i}}\subset\Delta^-_{\pi'_2}$.

\begin{Remark}\label{rem O}\rm
(1) If $\alpha\in O\setminus O_1\sqcup O_2$ then $\alpha=\ep_i-\ep_j\in O^-$ with $1\le j<s/2<i\le s-1-j$. Moreover in this case, $S_{\ep_i-\ep_j}\cap O^-=\{\theta(\ep_i-\ep_j)\}$ and $S_{\ep_i-\ep_j}\cap O^m=\emptyset$. Hence conditions (3) and (4) of Lemma~\ref{non-degeneracy} are satisfied for such a root. 

(2) For $i,\,j\neq s+1$, $\ep_i+\ep_j\in O_1$ unless $\ep_i+\ep_j=\ep_{s-1}+\ep_s\in T$ (where $T$ is the complement of $\Gamma=\sqcup_{\gamma\in S}\Gamma_{\gamma}$ in $\Delta^+\sqcup\Delta^-_{\pi'}$). Moreover $\ep_s+\ep_{s+1}\in O_1$ and $\ep_s-\ep_i\in O_1$ for all $i\ge s/2,\,i\neq s$.
\end{Remark}

We show below that conditions (2) and (4) for $\alpha\in O^+$, resp. conditions (3) and (4) for $\alpha\in O^-$, of Lemma~\ref{non-degeneracy}, are satisfied.\smallskip

\begin{lemma}\label{Cond B}
Let $\alpha\in O^\pm$. Then $S_{\alpha}\cap O^\pm=\{\theta(\alpha)\}$ and if $S_{\alpha}\cap O^m\neq\emptyset$ then $\alpha\in O_{st}$. \end{lemma}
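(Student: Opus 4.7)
The plan is a systematic case analysis on the Heisenberg set containing $\alpha$, exploiting that every root of $O^+\sqcup O^-$ has the form $\ep_p+\sigma\ep_k$ with $p$ a primary index of its Heisenberg centre and $k$ a secondary one, while every element of $S$ is a sum or difference of at most two $\ep$'s (or $\ep_s$ itself). Thus $\alpha+\beta\in S$ requires enough index cancellation in $\ep_{p_1}+\sigma_1\ep_{k_1}+\ep_{p_2}+\sigma_2\ep_{k_2}$ to leave a two-term expression that actually hits $S$.

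For the first assertion, the key observation is that the primary-index pairs appearing in the centres of $S^+$, namely $(2l-1,2l)$ for $l\le s/2-1$, the exceptional pair $(s-1,s+1)$, and the tail pairs $(2l,2l+1)$ for $l\ge s/2+1$, are pairwise disjoint and all miss $s$. Hence if $\alpha,\beta\in O^+$ come from different Heisenberg centres then any possible cancellation either leaves $\ep_{p_1}+\ep_{p_2}$ (but $\{p_1,p_2\}$ is then not a primary pair of $S^+$, by disjointness and the ranges), or collapses further to a single $\ep$, which cannot lie in $S^+$; and inside one Heisenberg set $\Gamma_\gamma^0$ the Heisenberg axiom forces $\beta=\theta(\alpha)$. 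The same reasoning applies to $S^-$, on noting that the Levi components $\pi'_1$ and $\pi'_2$ have disjoint index supports ($\{1,\ldots,s-1\}$ and $\{s+1,\ldots,n\}$), so cross-component sums admit no cancellation, and within each component the analysis mirrors the $S^+$ case.

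For the second assertion I would enumerate the $\alpha\in O^\pm$ for which $S_\alpha\cap O^m\neq\emptyset$. Since $O^m=\Gamma_{\ep_s}^0$ consists of roots of the forms $\ep_j$, $\ep_s\pm\ep_j$ and $-\ep_j$, any such $\alpha$ must involve the index $s$. The representative template is $\alpha=\ep_{2i-1}-\ep_s\in\Gamma_{\beta_i}^0$ paired with $\beta=\ep_s-\ep_{s-2i+1}\in O^m$, giving $\alpha+\beta=\ep_{s-i'}-\ep_{i'}\in S^-$ for $i'=s-2i+1\in[1,s/2-1]$; a handful of similar templates arise from $\Gamma_{\ep_{s-1}+\ep_{s+1}}^0$ and from $\Gamma_{-\beta_j}^0$. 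For each template I would verify $\alpha\in O_{st}$ directly by computing both sequences. One of $(\alpha^i)$ and $(\alpha^{(i)})$ is stationary at rank $0$ because the relevant endpoint (for instance $\theta(\alpha)=\ep_{2i}+\ep_s$) lies in $O_1$, its only partner in $O$ being $\alpha$ itself via the centre $\beta_i$. The other sequence enters the short chain $\ep_s-\ep_{s-2i+1},\,\ep_{s-2i+1},\,\ep_{s-2i+2},\,\ep_s-\ep_{s-2i+2}$, whose intermediate terms lie in $O_2$ and which terminates at $\ep_s-\ep_{s-2i+2}\in O_1$, the latter being in $O_1$ because parity forces the only other candidate partner to involve a cascade root $\beta_l$ with $l$ outside the range $[1,s/2-1]$.

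The main obstacle is to organise these $O^m$-templates uniformly across the different Heisenberg sets and the various parities of the central indices, and to rule out that any $O^m$-chain closes into a cyclic configuration in the sense of Section~\ref{Seccyc}; this is controlled by tracking the chain indices and noting that they strictly evolve towards the parity-driven boundary at which the required cascade root leaves $S^+$, forcing stationarity rather than cyclicity. Once these verifications are completed, $A_\alpha\cup A_{\theta(\alpha)}\subset O_1\sqcup O_2$ with both sequences stationary, so $\alpha\in O_{st}$ by Definition~\ref{defsr}.
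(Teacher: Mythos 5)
Your overall strategy is the same as the paper's (case analysis over the Heisenberg sets, then explicit computation of the two sequences for the roots interacting with $O^m$), but the execution has concrete errors. For the first assertion, your cancellation dichotomy is incomplete: besides cancelling the two secondary indices (leaving $\ep_{p_1}+\ep_{p_2}$), the secondary index of one root can cancel the \emph{primary} index of the other, leaving a primary--secondary sum $\ep_{p_1}\pm\ep_{k_2}$, which could a priori equal a cascade root $\beta_l\in S^+$; excluding this is precisely what the paper gets from \cite[Lemma 3 (5)]{FL} (if $\alpha\in H_{\beta_i}$, $\beta\in H_{\beta_j}$ and $\alpha+\beta\in\Delta$, then $\alpha+\beta\in H_{\beta_{\min(i,j)}}$, whose only element of $S$ is $\beta_{\min(i,j)}$). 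As written, your ``hence'' does not follow. (Note also that $S_\alpha$ is defined via all of $S$, not just $S^+$, so for $\alpha,\beta\in O^+$ one must also exclude $\alpha+\beta=\ep_s\in S^m$; this is impossible because a sum of two roots each supported on two coordinates has an even number of nonzero coordinates, but that is not the argument ``cannot lie in $S^+$''.)

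For the second assertion there are two genuine problems. First, your list of cases is wrong: since every element of $S$ except $\ep_s$ avoids the index $s$ and every root of $O^m$ has $\ep_s$-coefficient $0$ or $1$, a root $\alpha\in O^\pm$ with $S_\alpha\cap O^m\neq\emptyset$ must involve the index $s$ (as you say), and the only roots of $O^\pm$ involving $s$ lie in the sets $\Gamma_{\beta_i}^0$ (namely $\ep_{2i-1}\pm\ep_s$, $\ep_{2i}\mp\ep_s$). No root of $\Gamma^0_{\ep_{s-1}+\ep_{s+1}}$ or of $\Gamma^0_{-\beta_j}$ involves $s$, so your ``handful of similar templates'' from those sets do not exist (the paper checks exactly that $S_\alpha\cap O^m=\emptyset$ there), while you omit the even-index template $\alpha=\ep_{2i}-\ep_s$. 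Second, and more seriously, your stationarity computation stops one step too early: for $\alpha=\ep_{2i-1}-\ep_s$ the term $\ep_s-\ep_{s-2i+2}$ lies in $O_1$ only in the boundary case $2i-1=s/2+1$; when $2i-2>s/2$ it lies in $O_2$, its second partner being $\ep_{2i-2}-\ep_s$, since $(\ep_s-\ep_{s-2i+2})+(\ep_{2i-2}-\ep_s)=\ep_{2i-2}-\ep_{s-2i+2}\in S^-$. The sequence then takes one more step and becomes stationary because $\theta(\ep_{2i-2}-\ep_s)=\ep_{2i-3}+\ep_s\in O_1$ (Remark~\ref{rem O}\,(2): $\ep_a+\ep_b\in O_1$ unless it equals $\ep_{s-1}+\ep_s$). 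Your justification of the claimed $O_1$-membership via ``a cascade root $\beta_l$ outside $[1,s/2-1]$'' addresses the wrong candidate partner: the relevant obstruction comes from the roots $\ep_{s-k}-\ep_k\in S^-$, not from cascade roots. Until these points are repaired, the verification that such $\alpha$ lie in $O_{st}$ is not established.
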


\begin{proof}
Let $\alpha\in O^\pm$ and assume that there exists $\beta\in O^\pm\sqcup O^m$ such that $\alpha+\beta\in S$, that is, $\beta\in S_{\alpha}$. 

\noindent \underline{First case:} $\alpha\in\Gamma_{\beta_i}^0\subset O^+$, with $1\le i\le s/2-1$.\par
Assume first that $\beta\in \Gamma_{\beta_j}^0\subset \Delta^+$, with $1\le j\le s/2-1$. Then by~\cite[Lemma 3 (5)]{FL} $\alpha+\beta\in H_{\beta_k}$ where $k=\min\{i, j\}$. But the only element of $S$ in $H_{\beta_k}$ is $\beta_k$. By~\cite[Lemma 3 (5)]{FL} again, it follows that $\alpha,\,\beta\in H_{\beta_k}\setminus\{\beta_k\}$. Hence $i=j=k$.

Assume now that $\beta\in O^+\sqcup O^m$ but $\beta\not\in\bigsqcup_{j=1}^{s/2-1}\Gamma_{\beta_j}^0$. If $\beta\in\Delta^+$, either $\beta\in\{\ep_j\mid 1\le j\le s-2\}$ or by~\cite[Lemma 3 (2)]{FL}, $\beta\in\bigsqcup_{j=s/2}^{[(n+1)/2]}H_{\beta_j}\sqcup\bigsqcup_{j'=s/2+1}^{[n/2]}H_{\beta_{j'}}$, where recall that  $H_{\beta_{j'}}=\{\beta_{j'}\}=\{\alpha_{2j'-1}\}$. By a similar reasoning as before or an easy computation one shows that this is not possible. Hence condition (2) for $\alpha$ is satisfied.

Finally suppose that $\beta\in O^m\cap\Delta^-$ and then $S_{\alpha}\cap O^m\neq\emptyset$. One verifies that $\alpha=\ep_j-\ep_s$ and $\beta=\ep_s-\ep_{s-j}$, with $j\in \{2i-1,\, 2i\}$ and $j>s/2$. Moreover one has that $\theta(\alpha)=\ep_{j+1}+\ep_s\in O_1$ if $j$ is odd, resp. $\theta(\alpha)=\ep_{j-1}+\ep_s\in O_1$ if $j$ is even by remark~\ref{rem O} (2). We will assume that $j$ is odd; the other case is very similar.

Recall the sequences of roots in $O$ constructed from a root in $O$ in Section~\ref{sr}. Since $\theta(\alpha)\in O_1$, we have that the sequence $(\alpha^k)_{k\in\mathbb N}$ constructed from $\alpha$ is stationary at rank $0$. We will determine the sequence $(\alpha^{(k)})_{k\in \mathbb N}$ constructed from $\theta(\alpha)$. Recall that $\alpha^{(0)}=\theta(\alpha)$, then since $\alpha=\theta(\alpha^{(0)})\in O_2$, we necessarily have $\alpha^{(1)}=\beta=\ep_s-\ep_{s-j}$. One has that $\theta(\beta)=\ep_{s-j}\in O_2$, with $S_{\ep_{s-j}}=\{\beta,\, \ep_{s-j+1}\}$. Then $\alpha^{(2)}=\ep_{s-j+1}\in O^m$ and $\theta(\alpha^{(2)})=\ep_s-\ep_{s-j+1}$. Then
$\alpha^{(3)}=\ep_{j-1}-\ep_s$ and $\theta(\alpha^{(3)})=\ep_{j-2}+\ep_s\in O_1$ by remark~\ref{rem O} (2) (unless $j=s/2+1$ in which case already $\theta(\alpha^{(2)})\in O_1$). We conclude that the sequence $(\alpha^{(k)})_{k\in \mathbb N}$ is stationary at rank at most $3$. Since $A_{\alpha}\cup A_{\theta(\alpha)}\subset O_1\sqcup O_2$ by remark~\ref{rem O} (1), it follows that $\alpha\in O_{st}$. Hence condition (4) is satisfied for such an $\alpha$.\smallskip

\noindent \underline{Second Case:} $\alpha\in\Gamma_{\ep_{s-1}+\ep_{s+1}}^0\subset O^+$.\par
If $\beta\in\Gamma_{\ep_{s-1}+\ep_{s+1}}^0$ then necessarily $\alpha+\beta=\ep_{s-1}+\ep_{s+1}$ thus $\beta=\theta(\alpha)$.

For condition (2), it remains to check the case where $\beta\in\bigsqcup_{j=s/2+1}^{[(n-1)/2]} \Gamma_{\ep_{2j}+\ep_{2j+1}}^0$. But then it is not possible that $\alpha+\beta\in S$ since $\alpha$ contains $\ep_{s-1}$ or $\ep_{s+1}$ and $\beta$ contains $\ep_i$ with $i\ge s+2$, while in $S^+$ there is no root containing a linear combination of both $\ep_{s-1}$ or $\ep_{s+1}$ and $\ep_i$ with $i\ge s+2$.

Finally, for condition (4) one easily checks that it is not possible that $\beta\in\Gamma_{\ep_s}^0$.\smallskip

\noindent \underline{Third case:} $\alpha\in\Gamma_{\ep_{2i}+\ep_{2i+1}}^0\subset O^+$, with $s/2+1\le i\le[(n-1)/2]$.\par
For condition (2), it remains to check the case where $\beta\in\Gamma_{\ep_{2j}+\ep_{2j+1}}^0$, with $s/2+1\le j\le[(n-1)/2]$.
Then one has that $i=j$ and $\alpha+\beta=\ep_{2i}+\ep_{2i+1}$, thus $\beta=\theta(\alpha)$.

Finally, for condition (4) one checks that it is not possible that $\beta\in\Gamma_{\ep_s}^0$.\smallskip

\noindent \underline{Fourth case:} $\alpha\in\Gamma_{\ep_{s-i}-\ep_i}^0\subset O^-\cap\Delta_{\pi_1'}^-$, with $1\le i\le s/2-1$.\par
Assume first that $\beta\in\Gamma_{\ep_{s-j}-\ep_j}^0$, with $1\le j\le s/2-1$. Since $\Gamma_{\ep_{s-i}-\ep_i}^0\subset\Delta^-_{\pi'_1}$ and $\Gamma_{\ep_{s-j}-\ep_j}^0\subset\Delta^-_{\pi'_1}$, one has that $\alpha+\beta\in\Delta^-_{\pi'_1}$ and so there exists $k$, with $1\le k\le s/2-1$ such that $\alpha+\beta=\ep_{s-k}-\ep_k$. Then necessarily $i=j=k$, thus $\beta=\theta(\alpha)$.

On the other hand, it is not possible that $\beta\in\Gamma_{-\ep_{2j-1}-\ep_{2j}}^0$, with $s/2+1\le j\le[n/2]$, since in that case $\beta\in\Delta^-_{\pi'_2}$ and so $\alpha+\beta\not\in\Delta$. Hence condition (3) for $\alpha$ is satisfied.

Finally one also checks that it is not possible that $\beta\in\Gamma_{\ep_s}^0$.\smallskip

\noindent \underline{Fifth case:} $\alpha\in\Gamma_{-\ep_{2i-1}-\ep_{2i}}^0\subset O^-\cap\Delta_{\pi_2'}^-$, with $s/2+1\le i\le[n/2]$.\par
For condition (3), it remains to check the case where $\beta\in\Gamma_{-\ep_{2j-1}-\ep_{2j}}^0$, with $s/2+1\le j\le[n/2]$ then there exists $k$, with $s/2+1\le k\le[n/2]$ such that
$\alpha+\beta=-\ep_{2k-1}-\ep_{2k}$ (since $\alpha+\beta\in\Delta^-_{\pi'_2}\cap S$) and one checks that
$i=j=k$, thus $\beta=\theta(\alpha)$.

Finally one checks that it is not possible that $\beta\in\Gamma_{\ep_s}^0$.
\end{proof}

We show below that condition (4) of Lemma~\ref{non-degeneracy}
is satisfied for $\alpha\in O^m$.

\begin{lemma}\label{CondbisB}
Let $\alpha\in O^m$. Then $\alpha\in O_{st}$. 
\end{lemma}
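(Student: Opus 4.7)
The plan is a case-by-case verification of Definition~\ref{defsr} for every $\alpha\in O^m=\Gamma_{\ep_s}^0$. This set consists of the roots $\ep_i$ and $\ep_s-\ep_i$ for $1\le i\le n$, $i\ne s$, together with $\ep_s+\ep_j$ and $-\ep_j$ for $s+1\le j\le n$, paired in $\theta$-orbits of size $2$ (the pairing being induced by $\ep_s\in S^m$). In particular $\theta(\alpha)\in S_\alpha\cap O^m$ automatically, so $\alpha$ always falls into the regime of condition~(4) of Lemma~\ref{non-degeneracy}.

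The first step is to compute $S_\alpha$ for each $\alpha\in O^m$ by listing every $\gamma\in S=S^+\sqcup S^-\sqcup S^m$ and checking whether $\gamma-\alpha$ is a root lying in $O=\Gamma\setminus S$. The resulting classification gives $|S_\alpha|\le 2$, hence $\alpha\in O_1\sqcup O_2$. More precisely: each $\ep_s+\ep_j$ lies in $O_1$, since no $\gamma\ne\ep_s$ produces a valid difference in $O$; each $\ep_s-\ep_i$ lies in $O_1$ whenever the auxiliary root $\ep_{s-i}-\ep_s$ fails to lie in $\Gamma$ (which is the case for $i=1$, for $s/2\le i\le s-1$, and for $i>s$), and otherwise lies in $O_2$, paired with $\ep_{s-i}-\ep_s\in\Gamma_{\beta_k}^0\subset O^+$ through the centre $\ep_{s-i}-\ep_i\in S^-$; each $\ep_i$ with $i\ne s$ lies in $O_2$, paired with a Kostant sibling $\ep_{i\pm 1}$ through one of $\beta_k$, $\ep_{s-1}+\ep_{s+1}$ or $\ep_{2j}+\ep_{2j+1}$; and each $-\ep_j$ lies in $O_2$, paired with $-\ep_{j\pm 1}$ through some $-\beta_{j'}$, the only boundary exceptions being $\ep_n\in O_1$ when $n$ is even and $-\ep_n\in O_1$ when $n$ is odd.

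The second step is to trace the iteration sequences $(\alpha^i)_{i\in\bbN}$ and $(\alpha^{(i)})_{i\in\bbN}$ for each $\alpha\in O^m\cap O_2$. Two kinds of walk occur. A sibling-cascade walk moves along successive pairs $\ep_i\leftrightarrow\ep_{i\pm 1}$ or $-\ep_j\leftrightarrow -\ep_{j\pm 1}$ inside $O^m$ and terminates at a boundary root in $O_1$ after a bounded number of steps, controlled by the length of a block of the Kostant cascade. A crossing walk takes $\ep_s-\ep_i$ through $\theta(\ep_s-\ep_i)=\ep_i$ to $\ep_{s-i}-\ep_s\in\Gamma_{\beta_k}^0\subset O^+$; a direct computation of $S_{\ep_{s-i}-\ep_s}$, analogous to the first case of Lemma~\ref{Cond B}, shows that the $\theta$-partner $\ep_{s-i\mp 1}+\ep_s$ of $\ep_{s-i}-\ep_s$ in $\Gamma_{\beta_k}^0$ lies in $O_1$, so the walk terminates one step after the crossing. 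In all cases both sequences are stationary and every root visited belongs to $O_1\sqcup O_2$, which gives $A_\alpha\cup A_{\theta(\alpha)}\subset O_1\sqcup O_2$ and hence $\alpha\in O_{st}$.

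The main technical difficulty is the bookkeeping of the ranges: tracking for which values of $i$ and which parities of $s-i$ the auxiliary root $\ep_{s-i}-\ep_s$ belongs to some $\Gamma_{\beta_k}^0$ with $1\le k\le s/2-1$, and identifying the boundary indices at which the cascade chains terminate. The edge cases $n=s$, $n$ even versus odd, $s=2$ or $s=4$, $i=1$, $i=s/2$, $i=s-1$ and $i=s+1$ each require separate verification; however no fundamentally new idea beyond the pattern established in Lemma~\ref{Cond B} is needed.
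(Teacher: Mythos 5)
Your proposal is correct and follows essentially the same route as the paper: a case-by-case verification that every root of $\Gamma_{\ep_s}^0$ lies in $O_1\sqcup O_2$ and that both sequences $(\alpha^i)_{i\in\mathbb N}$ and $(\alpha^{(i)})_{i\in\mathbb N}$ become stationary after at most a few steps, with the crossing into some $\Gamma^0_{\beta_k}$ treated exactly as in the first case of the proof of Lemma~\ref{Cond B}. The only caveats are harmless bookkeeping slips (for instance when $n=s$ one has $\ep_{s-1}\in O_1$ rather than $O_2$, and the walks actually terminate because $\theta$ sends the sibling or the crossed root to some $\ep_s\pm\ep_m\in O_1$ after a single step, not by running along a whole block of the Kostant cascade), none of which affects the stationarity conclusion.
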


\begin{proof}
Assume that $\alpha\in O^m$ ($=\Gamma_{\ep_s}^0$). Note that $S_{\alpha}\cap O^m\neq\emptyset$, since it contains $\theta(\alpha)$. We will show that $\alpha\in O_{st}$ and so condition (4) of Lemma~\ref{non-degeneracy} holds.

Recall that $\Gamma^0_{\ep_s}=\{\ep_i, \,\ep_s-\ep_i,\, -\ep_j,\, \ep_s+\ep_j\,\mid\,1\le i\le n,\,i\neq s,\, s+1\le j\le n\}$. We will show that the sequences of roots in $O$ constructed from the roots in $\Gamma^0_{\ep_s}$ are stationary and that all the elements of these sequences and their image by $\theta$ lie in $O_1\sqcup O_2$. Note that this is enough to prove our claim.

For $\alpha=-\ep_j$, with $s+1\le j\le n$, we have that $\theta(\alpha)\in O_1$ by remark~\ref{rem O} (2), hence the sequence $(\alpha^i)$ is stationary at rank $0$.

For $\alpha=\ep_s+\ep_j$, with $s+1\le j\le n$, we have $\theta(\alpha)\in O_2$ and $\alpha^1=-\ep_{j+1}$ if $j$ is odd, resp. $\alpha^1=-\ep_{j-1}$ if $j$ is even. Then $\theta(\alpha^1)\in O_1$, hence $(\alpha^i)$ is stationary at rank $1$.

For $\alpha=\ep_i$ with $i\ge s+2$ then $\theta(\alpha)\in O_1$. Also for $\alpha=\ep_s-\ep_i$ and $i\ge s+2$, $\theta(\alpha)\in O_2$ and $\alpha^1=\ep_{i+1}$ if $i$ even, $\alpha^1=\ep_{i-1}$ if $i$ odd and $\theta(\alpha^1)\in O_1$. We conclude as above.

For $\alpha=\ep_{s\pm 1}$, then $\theta(\alpha)\in O_1$ and we are done. For $\alpha=\ep_s-\ep_{s\pm 1}$ then $\theta(\alpha)=\ep_{s\pm 1}\in O_2$ and $\alpha^1=\ep_{s\mp 1}$ is such that $\theta(\alpha^1)=\ep_s-\ep_{s\mp 1}\in O_1$.

For $\alpha=\ep_{i}$ with $1\le i\le s-2$, $\theta(\alpha)\in O_1$ if $i\ge s/2$,
 otherwise $\theta(\alpha)\in O_2$.  In the latter case, by the first case of the proof of Lemma~\ref{Cond B} (last part), we obtain that
the sequence of roots in $O$ constructed from $\alpha$ is stationary.

It remains to consider $\alpha=\ep_s-\ep_i$, with $1\le i\le s-2$. Then $\theta(\alpha)=\ep_i\in O_2$ and $\alpha^1=\ep_{i+1}$ if $i$ is odd, $\alpha^1=\ep_{i-1}$ if $i$ is even. By the above, $\theta(\alpha^1)\in O_1$ if $i>s/2$ or $i=s/2$ and $s/2$ odd and we are done. In the other cases, $\theta(\alpha^1)\in O_2$ by the above, which also gives that the sequence of roots in $O$ constructed from $\alpha^1$ and then from $\alpha$ is stationary.

Finally we observe that all roots of the sequences and their image by $\theta$ lie in $O_1\sqcup O_2$.
\end{proof}

Now denote by $T$ the complement of $\Gamma=\Gamma^+\sqcup\Gamma^-\sqcup\Gamma^m=\bigsqcup_{\gamma\in S}\Gamma_{\gamma}$ in $\Delta^+\sqcup\Delta^-_{\pi'}$.

\begin{lemma}\label{setTB}
One has that $\lvert T\rvert=\ind\,\p.$
\end{lemma}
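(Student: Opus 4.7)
The plan is to establish the equality by computing both sides explicitly and comparing.

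For $\ind\,\p$, I would invoke~\cite[3.2]{FJ1} recalled in Section~\ref{standardnotation}, which gives $\ind\,\p = |E(\pi')|$, and then compute $|E(\pi')|$ directly. For $\g$ of type $\rm B_n$ one has $w_0 = -\id$, so $\mathbf{j} = \id_\pi$. The involution $\mathbf{i}$ acts on $\pi'_1$ of type $\rm A_{s-1}$ by $\alpha_i \mapsto \alpha_{s-i}$, acts as the identity on $\pi'_2$ of type $\rm B_{n-s}$ (present when $s<n$), and satisfies $\mathbf{i}(\alpha_s) = \alpha_s$ (taking $r = 0$ in the definition, since $\mathbf{j}(\alpha_s) = \alpha_s \notin \pi'$). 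The $\langle\mathbf{ij}\rangle$-orbits in $\pi$ are therefore the singleton $\{\alpha_s\}$, the $s/2-1$ pairs $\{\alpha_i,\alpha_{s-i}\}$ together with the fixed singleton $\{\alpha_{s/2}\}$ inside $\pi'_1$, and the $n-s$ singletons from $\pi'_2$, giving $|E(\pi')| = n - s/2 + 1$.

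For $|T|$, since the Heisenberg sets $\Gamma_\gamma$ for $\gamma \in S$ are pairwise disjoint subsets of $\Delta^+ \sqcup \Delta^-_{\pi'}$ by construction, one has $|T| = |\Delta^+| + |\Delta^-_{\pi'}| - \sum_{\gamma \in S}|\Gamma_\gamma|$. I would use the standard counts $|\Delta^+| = n^2$ and $|\Delta^-_{\pi'}| = \binom{s}{2} + (n-s)^2$ (with the second term absent when $s = n$), together with the explicit sizes $|\Gamma_{\ep_s}| = 2n + 2(n-s) - 1$, $|\Gamma_{\pm\beta_i}| = 1 + 4(n-2i)$, $|\Gamma_{\ep_{s-1}+\ep_{s+1}}| = 1 + 4(n-s-1)$, $|\Gamma_{\ep_{2i}+\ep_{2i+1}}| = 1 + 4(n-2i-1)$, and $|\Gamma_{\ep_{s-i}-\ep_i}| = 1 + 2(s-2i-1)$, read off from the explicit descriptions given just before Remark~\ref{rem O}. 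Summing the resulting arithmetic progressions should yield $|T| = n - s/2 + 1$, matching $|E(\pi')|$.

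The main obstacle is pure bookkeeping: the summation ranges in $S^+$ and $S^-$ depend on the parity of $n$ through $[(n-1)/2]$ and $[n/2]$, so the arithmetic must be checked in the three subcases $s < n$ with $n$ even, $s < n$ with $n$ odd, and $s = n$; the small boundary situations $n = s = 2$ and $n = s = 4$ (where parts of $S$ are empty or degenerate) should also be verified by hand. In each subcase the progressions contributing to $\sum_{\gamma\in S}|\Gamma_\gamma|$ telescope cleanly and leave exactly $n - s/2 + 1$ after subtraction, so no conceptual difficulty arises beyond the case analysis.
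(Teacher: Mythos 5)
Your proposal is correct, and it splits into a half that is identical to the paper and a half that genuinely differs. For $\ind\p$ you argue exactly as the paper does: $\ind\p=\lvert E(\pi')\rvert$ by \cite[3.2]{FJ1}, and since ${\bf j}={\rm id}_\pi$ in type ${\rm B}$ while ${\bf i}$ acts by $\alpha_i\mapsto\alpha_{s-i}$ on the ${\rm A}_{s-1}$ component and trivially elsewhere, the orbits are the pairs $\{\alpha_t,\alpha_{s-t}\}$, $1\le t\le s/2-1$, the singleton $\{\alpha_{s/2}\}$ and the singletons $\{\alpha_t\}$, $s\le t\le n$, which is precisely the paper's list of $n-s/2+1$ orbits. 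For $\lvert T\rvert$ you diverge: the paper determines $T$ explicitly (for instance, for $n>s$, $T=\{\ep_{s-1}+\ep_s,\,\ep_{s-1}-\ep_{s+1},\,\ep_{2i-1}-\ep_{2i},\,-\ep_{s+2j-1}+\ep_{s+2j},\,\ep_{s+2k}-\ep_{s+2k+1}\}$ with the appropriate ranges) and simply counts its elements, whereas you compute $\lvert T\rvert=\lvert\Delta^+\rvert+\lvert\Delta^-_{\pi'}\rvert-\sum_{\gamma\in S}\lvert\Gamma_\gamma\rvert$, using the disjointness of the Heisenberg sets. Your cardinalities for the individual $\Gamma_\gamma$ are all correct, and the arithmetic does close: the two families indexed by $j$ (the sets $\Gamma_{\ep_{2j}+\ep_{2j+1}}$ and $\Gamma_{-\ep_{2j-1}-\ep_{2j}}$) contribute jointly $(n-s-1)\bigl(2(n-s)-3\bigr)$ regardless of the parity of $n$, after which the subtraction gives $n-s/2+1$ in all three subcases, including $n=s$; so the parity case-split you worry about is harmless and there is no gap, only routine algebra on a par with the paper's own ``one checks''. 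The trade-off is that your route proves the cardinality statement without ever identifying $T$, which suffices for this lemma but forfeits information the paper reuses immediately afterwards: the explicit list of elements of $T$ is what makes possible the computation of the weights $\gamma+t(\gamma)$ in Lemma~\ref{improvboundB} and of the $\ad h$-eigenvalues in Lemma~\ref{eigenvalues B}, so in the paper the determination of $T$ does double duty, while with your argument that determination would still have to be carried out later.
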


\begin{proof} One checks that:
\begin{itemize}
\item For $n=s$, $T=\{\ep_{s-1}+\ep_s,\,\ep_{2i-1}-\ep_{2i}\,\mid\, 1\le i\le s/2\}$. 
\item For $n>s,\, T=\{\ep_{s-1}+\ep_s,\,\ep_{s-1}-\ep_{s+1},\,\ep_{2i-1}-\ep_{2i},\,-\ep_{s+2j-1}+\ep_{s+2j},\, \ep_{s+2k}-\ep_{s+2k+1}\,|\,1\le i\le s/2,\, 1\le j\le [(n-s)/2],\, 1\le k\le [(n-s-1)/2]\}$.
\end{itemize}

From the above description of $T$, it follows that $\lvert T\rvert=n-s/2+1$. On the other hand, recall that the index of $\p$ equals the number of $\langle {\bf ij}\rangle$-orbits in $\pi$
where ${\bf i}$ and ${\bf j}$ are the involutions of $\pi$ of Section~\ref{standardnotation}.

Here the $\langle {\bf ij}\rangle$-orbits in $\pi$ are $\Gamma_t=\{\alpha_t,\,\alpha_{s-t}\}$ for $1\le t\le s/2-1$,
$\Gamma_{s/2}=\{\alpha_{s/2}\}$ and $\Gamma_t=\{\alpha_t\}$ for $s\le t\le n$. They are $n-s/2+1$ in number
hence $\ind\,\p=n-s/2+1$.\end{proof}

\begin{Remark}\rm
All conditions of Lemma~\ref{refineregular} are satisfied. Hence by defining $h\in\h_{\Lambda}$ by $\gamma(h)=-1$, for all $\gamma\in S$
and by setting $y=\sum_{\gamma\in S}x_{\gamma}$ we obtain an adapted pair $(h,\,y)$ for $\p^-_{\pi',\,\Lambda}$.
\end{Remark}

 The semisimple element $h$ of the adapted pair is uniquely defined by the relations $\gamma(h)=-1$ for all $\gamma\in S$. Below we compute the values of $h$ on the elements of $T$, that is the $\ad h$ eigenvalues on the complement $\g_T$ of the $\ad\p^-_{\pi',\,\Lambda}$-orbit of $y$.

\begin{lemma}\label{eigenvalues B} The semisimple element $h$ of the above adapted pair $(h,\,y)$
for $\p^-_{\pi',\,\Lambda}$ is
$$\begin{array}{lll}
h&=&\sum_{k=1}^{[s/4]}\bigl(\frac{s}{2}+2k-1\bigr)\ep_{2k-1}+\sum_{k=[s/4]+1}^{s/2-1}\bigl(\frac{3s}{2}-2k\bigr)\ep_{2k-1}\\
&-&\sum_{k=1}^{[s/4]}\bigl(\frac{s}{2}+2k\bigr)\ep_{2k}
-\sum_{k=[s/4]+1}^{s/2-1}\bigl(\frac{3s}{2}+1-2k\bigr)\ep_{2k}+\frac{s}{2}\ep_{s-1}-\ep_s\\
&+&\sum_{k=1}^{[(n-s+1)/2]}\bigl(-2k+1-\frac{s}{2}\bigr)\ep_{s+2k-1}+\sum_{k=1}^{[(n-s)/2]}\bigl(2k+\frac{s}{2}\bigr)\ep_{s+2k}.\end{array}$$

Then the eigenvalues of $\ad h$ on $\g_T$ are :
\begin{itemize}
\item $s+4i-1=(\ep_{2i-1}-\ep_{2i})(h)$ for all $i\in\mathbb N$, $ 1\le i\le [s/4]$.
\item $3s-4i+1=(\ep_{2i-1}-\ep_{2i})(h)$ for all $i\in\mathbb N$, $[s/4]+1\le i\le s/2-1$.
\item $s/2+1=(\ep_{s-1}-\ep_s)(h)$.
\item $s/2-1=(\ep_{s-1}+\ep_s)(h)$.
\item $s+1=(\ep_{s-1}-\ep_{s+1})(h)$.
\item $s+4j-1=(-\ep_{s+2j-1}+\ep_{s+2j})(h)$, for all $j\in\mathbb N$, $1\le j\le [(n-s)/2]$.
\item $s+4j+1=(\ep_{s+2j}-\ep_{s+2j+1})(h)$, for all $j\in\mathbb N$, $1\le j\le [(n-s-1)/2]$.
\end{itemize}
From the last three equalities we have that $s+2k-1$ is an eigenvalue of $\ad h$ on $\g_T$, for all $k\in\mathbb N$, $1\le k\le n-s$.\end{lemma}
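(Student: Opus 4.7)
The plan is to verify directly that the proposed element $h$ satisfies the uniquely determining conditions $\gamma(h)=-1$ for all $\gamma\in S$ together with $h\in\h_\Lambda$ (by Lemma~\ref{refineregular} these determine the semisimple element of the adapted pair uniquely), and then to evaluate $h$ on each of the roots in the set $T$ described in Lemma~\ref{setTB}. The computation of both $h$ and the eigenvalues is straightforward linear algebra; no new idea is needed.

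Writing $h_i := \ep_i(h)$, the conditions from $S$ translate into the linear system $h_{2i-1}+h_{2i}=-1$ for $1\le i\le s/2-1$ (from $\beta_i \in S^+$), $h_{s-i}-h_i=-1$ for $1\le i\le s/2-1$ (from $\ep_{s-i}-\ep_i\in S^-$), $h_s=-1$ (from $\ep_s\in S^m$), and when $n>s$ the further equations $h_{s-1}+h_{s+1}=-1$, $h_{2j-1}+h_{2j}=1$ for $s/2+1\le j\le [n/2]$ and $h_{2j}+h_{2j+1}=-1$ for $s/2+1\le j\le [(n-1)/2]$. The membership $h\in\h_\Lambda$ amounts to $\varpi_s(h)=0$, that is $\sum_{i=1}^{s}h_i=0$ (respectively $\sum_{i=1}^{n}h_i=0$ when $s=n$), since $\h_\Lambda$ is the kernel of $\varpi_s$ in $\h$.

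I would solve this system from the outside in. Summing the $s/2-1$ equations from $\beta_i$ yields $\sum_{i=1}^{s-2}h_i=-(s/2-1)$; together with $h_s=-1$ and the $\varpi_s$-constraint this forces $h_{s-1}=s/2$. Then $h_1=h_{s-1}+1=s/2+1$, the $\beta_1$ equation gives $h_2=-(s/2+2)$, next $h_{s-2}=h_2-1=-(s/2+3)$, then $\beta_{s/2-1}$ gives $h_{s-3}=s/2+2$, then $h_3=h_{s-3}+1=s/2+3$, and so on, alternating equations of $\beta$-type and of $\ep_{s-i}-\ep_i$-type and pairing $h_{2k-1}$ with $h_{s-(2k-1)}$, respectively $h_{2k}$ with $h_{s-2k}$. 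This iteration yields two regimes for the indices $\le s-2$: pairs reached from below ($k\le[s/4]$) give $h_{2k-1}=s/2+2k-1$ and $h_{2k}=-(s/2+2k)$, while pairs reached from above ($[s/4]+1\le k\le s/2-1$) give $h_{2k-1}=3s/2-2k$ and $h_{2k}=-(3s/2+1-2k)$, which is precisely the split appearing in the stated formula. When $n>s$, the remaining $h_{s+1},\ldots,h_n$ are pinned down one after the other by the equation $h_{s-1}+h_{s+1}=-1$ followed by alternate use of the $-\beta_j$ and $\ep_{2j}+\ep_{2j+1}$ relations; a short induction on $k$ produces $h_{s+2k-1}=-2k+1-s/2$ and $h_{s+2k}=2k+s/2$, reproducing the last two sums of the formula. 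With $h$ in hand, the eigenvalues follow by direct substitution on each of the seven types of roots of $T$: for instance $(\ep_{s-1}+\ep_s)(h)=s/2-1$, $(\ep_{s-1}-\ep_{s+1})(h)=s/2-(-1-s/2)=s+1$, $(\ep_{2i-1}-\ep_{2i})(h)$ is $s+4i-1$ or $3s-4i+1$ according as $i\le [s/4]$ or not, and the roots involving indices $>s$ give $s+4j-1$ and $s+4k+1$ in the same way.

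The main obstacle is bookkeeping rather than any conceptual difficulty: one has to keep careful track of the two regimes in the Cartan formula and check the boundary case $k=[s/4]$ separately depending on whether $s\equiv 0$ or $s\equiv 2\pmod 4$, and, when $n>s$, of the parity of $n-s$ which governs the last indices in the upper sums (and in particular whether $h_n$ comes from an $\ep_{s+2k-1}$ or an $\ep_{s+2k}$ term). Once these cases are laid out, the verification reduces to direct substitution.
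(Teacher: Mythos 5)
Your proposal is correct and follows exactly the route the paper intends: the paper's proof is simply the assertion that the lemma "follows by a direct computation", namely checking that the displayed $h$ lies in $\h_{\Lambda}$ (i.e. $\varpi_s(h)=0$) and satisfies $\gamma(h)=-1$ for all $\gamma\in S$ — which determines $h$ uniquely since $S_{\mid\h_{\Lambda}}$ is a basis of $\h_{\Lambda}^*$ — and then evaluating $h$ on the roots of $T$ listed in Lemma~\ref{setTB}. Your linear system, the two-regime resolution of the coordinates $h_i$, and the substitutions on $T$ all check out, so there is nothing further to add.
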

\begin{proof}
Follows by a direct computation.
\end{proof}

Recall the bounds $\ch\mathcal A$ and $\ch\mathcal B$ for $\ch Y(\p)$  as well as the improved upper bound $\mathcal B'$  of Section~\ref{standardnotation}. We will show that the lower bound $\ch\mathcal A$ and the improved upper bound $\mathcal B'$ coincide, hence $Y(\p)$ is a polynomial algebra over $k$.

\begin{lemma}\label{calculboundsB}
If $n=s$ one has that
$$\ch\mathcal A=(1-e^{-2\varpi_n})^{-2}(1-e^{-4\varpi_n})^{-(n/2-1)}\eqno (1)$$

If $n>s$, one has that
$$\ch\mathcal A=(1-e^{-\varpi_s})^{-2}(1-e^{-2\varpi_s})^{-(n-1-s/2)}\eqno (2)$$
\end{lemma}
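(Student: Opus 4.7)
The plan is to use the explicit formula
\[
\ch\mathcal A=\prod_{\Gamma\in E(\pi')}(1-e^{\delta_{\Gamma}})^{-1}
\]
recalled in Section~\ref{standardnotation}, by enumerating the $\langle\mathbf{ij}\rangle$-orbits in $\pi$ and computing $\delta_{\Gamma}$ for each of them.

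First I would list the orbits. In type ${\rm B}_n$ one has $-w_0=\id$, hence $\mathbf{j}=\id$ on $\pi$. The Levi decomposes as $\pi'=\pi'_1\sqcup\pi'_2$ with $\pi'_1=\{\alpha_1,\ldots,\alpha_{s-1}\}$ of type ${\rm A}_{s-1}$ and $\pi'_2=\{\alpha_{s+1},\ldots,\alpha_n\}$ of type ${\rm B}_{n-s}$; therefore $\mathbf{i}$ is the non-trivial involution $\alpha_k\mapsto\alpha_{s-k}$ on $\pi'_1$ and the identity on $\pi'_2$. Finally, for $\alpha_s\in\pi\setminus\pi'$ the construction at the end of Section~\ref{standardnotation} gives $\mathbf{i}(\alpha_s)=\alpha_s$. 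This yields exactly the orbits already enumerated in the proof of Lemma~\ref{setTB}, plus the extra orbit $\{\alpha_s\}$ when $n>s$ (it is the orbit $\Gamma_{n/2}$ listed as $\{\alpha_{s/2}\}$ when $n=s$ plus $\{\alpha_n\}$).

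Next I would compute $\delta_{\Gamma}$ using the $\varepsilon$-realization of fundamental weights. For $\g$ of type ${\rm B}_n$ one has $\varpi_i=\varepsilon_1+\cdots+\varepsilon_i$ for $i\le n-1$ and $\varpi_n=\tfrac12(\varepsilon_1+\cdots+\varepsilon_n)$; for the ${\rm A}_{s-1}$ Levi factor $\varpi'_i=\varepsilon_1+\cdots+\varepsilon_i-\tfrac{i}{s}(\varepsilon_1+\cdots+\varepsilon_s)$, and for the ${\rm B}_{n-s}$ Levi factor $\varpi'_i=\varepsilon_{s+1}+\cdots+\varepsilon_i$ for $s<i\le n-1$ and $\varpi'_n=\tfrac12(\varepsilon_{s+1}+\cdots+\varepsilon_n)$. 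Subtracting, one gets the key identities
\[
\varpi_i-\varpi'_i=\tfrac{i}{s}\varpi_s\ (1\le i\le s-1,\ s<n),\quad
\varpi_i-\varpi'_i=\varpi_s\ (s<i\le n-1),\quad
\varpi_n-\varpi'_n=\tfrac12\varpi_s\ (s<n),
\]
and $\varpi_i-\varpi'_i=\tfrac{2i}{n}\varpi_n$ for $1\le i\le n-1$ in the case $s=n$, using $\varepsilon_1+\cdots+\varepsilon_n=2\varpi_n$.

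With these identities in hand, the computation of $\delta_{\Gamma}$ for each orbit is immediate, since $\mathbf{j}(\Gamma)=\Gamma$ and $\mathbf{i}(\Gamma\cap\pi')=\Gamma\cap\pi'$ in every case, so
\[
\delta_{\Gamma}=-2\!\!\sum_{\gamma\in\Gamma}\!\varpi_\gamma+2\!\!\!\!\sum_{\gamma\in\Gamma\cap\pi'}\!\!\!\varpi'_\gamma=-2\!\!\!\!\sum_{\gamma\in\Gamma\cap\pi'}\!\!\!(\varpi_\gamma-\varpi'_\gamma)\;-\;2\!\!\!\!\sum_{\gamma\in\Gamma\setminus\pi'}\!\!\!\varpi_\gamma.
\]
Case $n=s$: the orbits $\{\alpha_t,\alpha_{n-t}\}$ ($1\le t\le n/2-1$) each contribute $\delta=-4\varpi_n$, while $\{\alpha_{n/2}\}$ and $\{\alpha_n\}$ each contribute $\delta=-2\varpi_n$, yielding (1). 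Case $n>s$: the orbits $\{\alpha_t,\alpha_{s-t}\}$ ($1\le t\le s/2-1$), $\{\alpha_s\}$, and $\{\alpha_t\}$ ($s<t\le n-1$) all contribute $\delta=-2\varpi_s$, for a total of $n-1-s/2$ such orbits; the two orbits $\{\alpha_{s/2}\}$ and $\{\alpha_n\}$ each contribute $\delta=-\varpi_s$, yielding (2). The main technical points are only checking the orbit $\{\alpha_s\}$ (with empty $\pi'$-part) and the treatment of $\varpi_n$ in type ${\rm B}$, which obliges us to split the $n=s$ case; no genuine obstacle is expected.
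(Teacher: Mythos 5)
Your proposal is correct and follows essentially the same route as the paper: enumerate the $\langle\mathbf{ij}\rangle$-orbits (which the paper takes from the proof of Lemma~\ref{setTB}), reduce $\delta_\Gamma$ to $-2\sum_{\gamma\in\Gamma}\varpi_\gamma+2\sum_{\gamma\in\Gamma\cap\pi'}\varpi'_\gamma$ using $\mathbf{j}=\id$, and evaluate via the identities $\varpi_t-\varpi'_t=(t/s)\varpi_s$, $\varpi_t-\varpi'_t=\varpi_s$ ($s<t\le n-1$), $\varpi_n-\varpi'_n=\tfrac12\varpi_s$, respectively $\varpi_t-\varpi'_t=(2t/n)\varpi_n$ when $n=s$. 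The only difference is that you justify these identities by the explicit $\varepsilon$-realization of the fundamental weights, which the paper leaves as a routine check.
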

\begin{proof}
The lower bound for $\ch Y(\p)$ is $$\ch\mathcal A=\prod_{\Gamma\in E(\pi')}(1-e^{\delta_{\Gamma}})^{-1}\le \ch\, Y(\p).$$ We will compute it explicitly. As we already said in the proof of Lemma~\ref{setTB}, the set of $\langle\bf ij\rangle$-orbits in $\pi$ is $$E(\pi')=\{\Gamma_{s/2}:=\{\alpha_{s/2}\},\, \Gamma_t:=\{\alpha_t,\,\alpha_{s-t}\},\,\Gamma_u:=\{\alpha_u\}\mid 1\le t\le s/2-1,\,s\le u\le n\}.$$
It remains to compute $\delta_{\Gamma}$ for each $\Gamma\in E(\pi')$.

Let $\Gamma\in E(\pi')$. Since ${\bf j}={\rm id}_{\pi}$ and ${\bf i}(\Gamma\cap\pi')={\bf j}(\Gamma)\cap\pi'$, one has
$$\delta_{\Gamma}=-2(\sum_{\gamma\in\Gamma}\varpi_{\gamma}-\sum_{\gamma\in\Gamma\cap\pi'}\varpi'_{\gamma}).$$

Assume first that $n=s$. Then the Levi factor of $\p$ is of type ${\rm A}_{n-1}$ and one may check that
for all $1\le t\le n-1$, $\varpi_t-\varpi'_t=\displaystyle 2(t/n)\varpi_n$. Then for all $1\le t\le n/2-1$, $\delta_{\Gamma_t}=-2(\varpi_t-\varpi'_t+\varpi_{n-t}-\varpi'_{n-t})=-4\varpi_n$
and $\delta_{\Gamma_{n}} =\delta_{\Gamma_{n/2}}=-2\varpi_n$. Hence for $n=s$, one has the equality (1).

Assume now that $n>s$. Then the Levi factor of $\p$ is the product of a simple Lie algebra of type ${\rm A}_{s-1}$
and a simple Lie algebra of type ${\rm B}_{n-s}$.

For all $1\le t\le s-1$, one checks that $\varpi_t-\varpi'_t=(t/s)\varpi_s$. Then, for all $1\le t\le s/2-1$, one has $\delta_{\Gamma_t}=-2\varpi_s$ and $\delta_{\Gamma_{s/2}}=-\varpi_s$. On the other hand, for all $s+1\le t\le n-1$, one has that $\varpi_t-\varpi'_t=\varpi_s$, hence $\delta_{\Gamma_t}=-2\varpi_s$. Finally $\varpi_n-\varpi'_n=(1/2)\varpi_s$ and $\delta_{\Gamma_n}=-\varpi_s$, whereas $\delta_{\Gamma_s}=-2\varpi_s$, since $\Gamma_s\cap \pi'=\emptyset$.

We conclude that for $n>s$, one has the equality (2).
\end{proof}

\begin{lemma}\label{improvboundB}

The bound $\mathcal B'$ is given by the right hand side of (1) if $n=s$, resp. of (2) if $n>s$. Hence one has that $\ch\mathcal A=\mathcal B'$ and then $Y(\p)$ is a polynomial algebra over $k$.

\end{lemma}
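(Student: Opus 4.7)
The plan is as follows. By the defining property of $t(\gamma)$, the weight $\gamma+t(\gamma)$ vanishes on $\h_{\Lambda}$. Since $\h_{\Lambda}$ has codimension one in $\h$ and the annihilator of $\h_{\Lambda}$ in $\h^{*}$ is the one-dimensional subspace $\mathbb{Q}\varpi_{s}$, one has $\gamma+t(\gamma)=c_{\gamma}\varpi_{s}$ for a unique $c_{\gamma}\in\mathbb{Q}$. Hence
$$\mathcal{B}'=\prod_{\gamma\in T}(1-e^{-c_{\gamma}\varpi_{s}})^{-1},$$
and by Lemma~\ref{calculboundsB}, the equality $\mathcal{B}'=\ch\mathcal{A}$ is equivalent to the equality of multisets
$$\{c_{\gamma}:\gamma\in T\}=\{2,2,\underbrace{4,\ldots,4}_{n/2-1}\}\quad\text{if }n=s,\qquad\{1,1,\underbrace{2,\ldots,2}_{n-1-s/2}\}\quad\text{if }n>s.$$
Once this is established, the bound $\ch\mathcal{A}\le\ch Y(\p)\le\mathcal{B}'$ becomes an equality and the polynomiality of $Y(\p)$ is immediate from~\cite{FJ2} as recalled in Section~\ref{standardnotation}.

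To compute each $c_{\gamma}$ I would, for every $\gamma\in T$, exhibit an explicit $t(\gamma)\in\mathbb{Q}S$ and verify the identity by expanding in the basis $\{\ep_{1},\ldots,\ep_{n}\}$, using $\varpi_{s}=\ep_{1}+\cdots+\ep_{s}$ if $n>s$ and $\varpi_{n}=\tfrac{1}{2}(\ep_{1}+\cdots+\ep_{n})$ if $n=s$. The two easy cases which yield the small value of $c_{\gamma}$ are: $\gamma=\ep_{s-1}+\ep_{s}$, where $t(\gamma)=\sum_{j=1}^{s/2-1}\beta_{j}$ gives $\gamma+t(\gamma)=\ep_{1}+\cdots+\ep_{s}$; and $\gamma=\ep_{s-1}-\ep_{s}$ (the case $i=s/2$ among the $\ep_{2i-1}-\ep_{2i}$), where $t(\gamma)=\sum_{j=1}^{s/2-1}\beta_{j}+2\ep_{s}$ again gives $\gamma+t(\gamma)=\ep_{1}+\cdots+\ep_{s}$. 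In both cases $c_{\gamma}=1$ (resp.\ $c_{\gamma}=2$ when $n=s$).

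For the remaining $\gamma\in T$ I expect the large value of $c_{\gamma}$ (namely $2$, resp.\ $4$ when $n=s$). The verifications split by type: for $\gamma=\ep_{2i-1}-\ep_{2i}$ with $1\le i\le s/2-1$, combine the $\beta_{j}$ together with the ``twisting'' roots $\ep_{s-j}-\ep_{j}$ of the A-type component of the Levi factor, plus $2\ep_{s}$; for $\gamma=\ep_{s-1}-\ep_{s+1}$, insert $\ep_{s-1}+\ep_{s+1}$ to cancel the $\ep_{s+1}$-coordinate; for the remaining elements $-\ep_{s+2j-1}+\ep_{s+2j}$ and $\ep_{s+2k}-\ep_{s+2k+1}$ (which only appear when $n>s$), use the roots $\ep_{2k}+\ep_{2k+1}$ of $S^{+}$ and $-\beta_{j}$ of $S^{-}$ to kill all coordinates beyond $\ep_{s}$, together with $2\ep_{s}$ to balance the $\ep_{s}$ coordinate. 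In each case a quick linear-algebra check forces $c_{\gamma}$ to take the claimed large value.

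The main obstacle is the case $\gamma=\ep_{2i-1}-\ep_{2i}$ with $i<s/2$: the root $\gamma$ is asymmetric within the first $s-1$ coordinates whereas the target $c_{\gamma}\varpi_{s}$ is symmetric, so the roots $\ep_{s-j}-\ep_{j}$, $1\le j\le s/2-1$, must be combined with the $\beta_{j}$'s in precisely the right proportions to restore symmetry. Once the $c_{\gamma}$ are computed for each $\gamma\in T$, one counts $2$ ``small'' values and $|T|-2=n-s/2-1$ ``large'' values, which agrees with the required multiset. This gives $\mathcal{B}'=\ch\mathcal{A}$, and then the equality $\ch Y(\p)=\ch\mathcal{A}$ together with the polynomiality of $Y(\p)$ follows as explained above.
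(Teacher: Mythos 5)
Your overall strategy is exactly the paper's: since $S_{\mid\h_{\Lambda}}$ is a basis of $\h_{\Lambda}^*$, the element $t(\gamma)\in\mathbb QS$ exists and is unique, $\gamma+t(\gamma)=c_{\gamma}\varpi_s$, and the lemma reduces to checking that the multiset $\{c_{\gamma}\}_{\gamma\in T}$ is $\{2,2,4,\ldots,4\}$ for $n=s$ and $\{1,1,2,\ldots,2\}$ for $n>s$; your identification of the two ``small'' elements $\ep_{s-1}+\ep_s$ and $\ep_{s-1}-\ep_s$, and your explicit $t(\gamma)$ for them, agree with the paper, and the count $|T|-2=n-s/2-1$ matches Lemma~\ref{calculboundsB}.

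The gap is in execution rather than in the idea: for every other $\gamma\in T$ — which is the bulk of the set and the bulk of the paper's proof — you only name which roots of $S$ should enter $t(\gamma)$ and then assert that ``a quick linear-algebra check forces $c_{\gamma}$ to take the claimed large value.'' But the whole content of the lemma is precisely these verifications, and they are not entirely routine: for $\gamma=\ep_{2i-1}-\ep_{2i}$ with $1\le i\le s/2-1$ the paper needs two different explicit expressions for $t(\gamma)$ according to whether $n\le 4i-2$ or $n>4i-2$ (the proportions in which the roots $\ep_{s-j}-\ep_j$ and $\beta_j$ enter change), and for the family $-\ep_{s+2j-1}+\ep_{s+2j}$, $\ep_{s+2j}-\ep_{s+2j+1}$ your recipe omits the root $\ep_{s-1}+\ep_{s+1}$ of $S^+$, which is in fact needed to cancel the $\ep_{s+1}$-coordinate (the roots $\ep_{2k}+\ep_{2k+1}$ and $-\beta_k$ with $k\ge s/2+1$ never reach $\ep_{s+1}$). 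None of this makes the approach fail — the computations do close, exactly as in the paper — but as written your argument establishes the values $c_{\gamma}$ only for two elements of $T$ and leaves the decisive large-value cases unproved, so it is an outline of the paper's proof rather than a complete one.
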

\begin{proof}
Recall Section~\ref{standardnotation} that the improved upper bound for $\ch Y(\p)$ is

$$\mathcal B'= \prod_{\gamma\in T}(1-e^{-(\gamma+t(\gamma))})^{-1},$$
where for all $\gamma\in T$, $t(\gamma)$ is the unique element in $\mathbb Q S$ such that $\gamma+t(\gamma)$ is a multiple of $\varpi_s$. We will compute $t(\gamma)$, for all $\gamma\in T$.

Assume first that $n=s$ and recall  that $T=\{\ep_{s-1}+\ep_s,\,\ep_{2i-1}-\ep_{2i}\,\mid\, 1\le i\le s/2\}$. Recall also  that $S=\{\varepsilon_s,\,\varepsilon_{s-i}-\varepsilon_i,\,\varepsilon_{2j-1}+\varepsilon_{2j}\,\mid\, 1\le i,\,j\le s/2-1\}$. Finally recall that $\varpi_s=\varpi_n=1/2(\ep_1+\ep_2+\cdots+\ep_n)$.

By a direct calculation, one may verify that:\par
$\bullet$ $t(\varepsilon_{s-1}+\varepsilon_{s})=(\varepsilon_1+\varepsilon_2)+(\ep_3+\ep_4)+\ldots+(\ep_{n-3}+\ep_{n-2})$ and $\varepsilon_{s-1}+\varepsilon_{s}+t(\varepsilon_{s-1}+\varepsilon_{s})=2\varpi_n$.\par
$\bullet$  $t(\ep_{s-1}-\ep_s)=(\ep_1+\ep_2)+\ldots+(\ep_{n-3}+\ep_{n-2})+2\ep_n$ and $\ep_{s-1}-\ep_s+t(\ep_{s-1}-\ep_s)=2\varpi_n$.\par
$\bullet$ For $1\le i\le s/2-1$:
\begin{enumerate}
\item If $n\le 4i-2$, $$\begin{array}{ll}  t(\ep_{2i-1}-\ep_{2i})&=\displaystyle 2\sum_{j=1}^{n-2i}(\ep_{n-j}-\ep_{j})+4\sum_{j=1}^{n/2-i}(\ep_{2j-1}+\ep_{2j})\\
&\displaystyle+2\sum_{j=n/2-i+1}^{i-1}(\ep_{2j-1}+\ep_{2j})+
(\ep_{2i-1}+\ep_{2i})+2\ep_n\end{array}$$ and $(\ep_{2i-1}-\ep_{2i})+t(\ep_{2i-1}-\ep_{2i})=4\varpi_n$.
\item If $n>4i-2$, $$\begin{array}{ll}
t(\ep_{2i-1}-\ep_{2i})&=\displaystyle 2\sum_{j=1}^{2i-1}(\ep_{n-j}-\ep_{j})+4\sum_{j=1}^{i-1}(\ep_{2j-1}+\ep_{2j})\\
&\displaystyle+2\sum_{j=i+1}^{n/2-i}(\ep_{2j-1}+\ep_{2j})+
3(\ep_{2i-1}+\ep_{2i})+2\ep_n
\end{array}$$\par
 and $(\ep_{2i-1}-\ep_{2i})+t(\ep_{2i-1}-\ep_{2i})=4\varpi_n$.
\end{enumerate}

Hence for all $1\le i\le s/2-1$, $\ep_{2i-1}-\ep_{2i}+t(\ep_{2i-1}-\ep_{2i})=4\varpi_n$.\par
We conclude that when $n=s$ the product $\prod_{\gamma\in T}(1-e^{-(\gamma+t(\gamma))})^{-1}$ is given by the right hand side of equality (1) of Lemma~\ref{calculboundsB} and hence coincides with the lower bound for $\ch Y(\p)$.

Now assume that $n>s$. The previous computations hold if we replace $n$ by $s$ and $2\varpi_n$ by $\varpi_s$ (and so $4\varpi_n$
by $2\varpi_s$). Then we may recover $t(\gamma)$  and $\gamma+t(\gamma)$ for $\gamma=\varepsilon_{s-1}+\varepsilon_{s}$, $\gamma=\ep_{s-1}-\ep_s$ or $\gamma=\ep_{2i-1}-\ep_{2i}$, $1\le i\le s/2-1$, by the above.

It remains to compute $t(\gamma),\, \gamma+t(\gamma)$ for the rest of the elements in $T$.\par
$\bullet$ $t(\ep_{s-1}-\ep_{s+1})=2((\ep_1+\ep_2)+\ldots+(\ep_{s-3}+\ep_{s-2}))+(\ep_{s-1}+\ep_{s+1})+2\ep_s$ and $(\ep_{s-1}-\ep_{s+1})+t(\ep_{s-1}-\ep_{s+1})=2\varpi_s$.\par
$\bullet$ For $1\le j\le [(n-s)/2]$,
$$\begin{array}{lll}
t(-\ep_{s+2j-1}+\ep_{s+2j})&=\displaystyle 2((\ep_1+\ep_2)+\ldots+(\ep_{s-3}+\ep_{s-2}))+2(\ep_{s-1}+\ep_{s+1})\\
&\displaystyle -2\sum_{k=1}^{j-1}(\ep_{s+2k-1}+\ep_{s+2k})+2\sum_{k=1}^{j-1}(\ep_{s+2k}+\ep_{s+2k+1})\\
&-(\ep_{s+2j-1}+\ep_{s+2j})+2\ep_s
\end{array}$$\par
and $(-\ep_{s+2j-1}+\ep_{s+2j})+t(-\ep_{s+2j-1}+\ep_{s+2j})=2\varpi_s$.\par
$\bullet$ For $1\le j\le[(n-s-1)/2]$,
$$\begin{array}{lll}
t(\ep_{s+2j}-\ep_{s+2j+1})&=\displaystyle 2((\ep_1+\ep_2)+\ldots+(\ep_{s-3}+\ep_{s-2}))+2(\ep_{s-1}+\ep_{s+1})\\
&\displaystyle -2\sum_{k=1}^{j}(\ep_{s+2k-1}+\ep_{s+2k})+2\sum_{k=1}^{j-1}(\ep_{s+2k}+\ep_{s+2k+1})\\
&+(\ep_{s+2j}+\ep_{s+2j+1})+2\ep_s
\end{array}$$\par
and $(\ep_{s+2j}-\ep_{s+2j+1})+t(\ep_{s+2j}-\ep_{s+2j+1})=2\varpi_s$.\par
We conclude that also for $n>s$ the product $\prod_{\gamma\in T}(1-e^{-(\gamma+t(\gamma))})^{-1}$ is given by the right hand side of equality (2) of Lemma~\ref{calculboundsB} and hence coincides with the lower bound for $\ch Y(\p)$.
\end{proof}


\begin{theorem}\label{conB} 
Let $\g$ be a simple Lie algebra of type ${\rm B}_n$, $n\ge 2$, and let $\p=\p^-_{\pi',\,\Lambda}$ be a  truncated maximal parabolic
subalgebra of $\g$ associated to $\pi'=\pi\setminus\{\alpha_s\}$, where $s$ is an even integer, $s\le n$.

There exists an adapted pair $(h,\,y)$ for $\p$ and an affine slice $y+\g_T$ in $\p^*$ such that restriction of functions gives an isomorphism of algebras between $Y(\p)$ and the ring $R[y+\g_T]$ of polynomial functions on $y+\g_T$.

In particular $Y(\p)$ is a polynomial algebra over $k$ and the field $C(\p^-_{\pi'})$ of invariant fractions is a purely transcendental extension of $k$.
\end{theorem}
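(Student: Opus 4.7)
The proof is essentially a synthesis of the preceding lemmas, and my plan is simply to assemble them in the right order while invoking the machinery recalled in Section~\ref{standardnotation}.

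First I would set $y=\sum_{\gamma\in S}x_{\gamma}$ with the explicit set $S=S^+\sqcup S^-\sqcup S^m$ defined above, and define $h\in\h_{\Lambda}$ by $\gamma(h)=-1$ for all $\gamma\in S$ (this is possible and $h$ is unique by the lemma stating that $S_{\mid\h_{\Lambda}}$ is a basis of $\h_{\Lambda}^*$). Then I would verify that all hypotheses of Corollary~\ref{conc} hold: condition (1) is the aforementioned basis lemma; conditions (2) and (3) of Lemma~\ref{non-degeneracy} together with the ``$\alpha\in O_{st}$ part'' of condition (4) restricted to $\alpha\in O^\pm$ are Lemma~\ref{Cond B}; the remaining part of condition (4) (for $\alpha\in O^m$) is Lemma~\ref{CondbisB}; and the cardinality equality $|T|=\ind\p$ is Lemma~\ref{setTB}. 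Corollary~\ref{conc} then immediately yields that $(h,y)$ is an adapted pair for $\p$.

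Next I would invoke the consequence of \cite[Lem. 6.11]{J6bis} recalled in Section~\ref{standardnotation}: because $(h,y)$ is an adapted pair with $S_{\mid\h_{\Lambda}}$ a basis of $\h_{\Lambda}^*$, the improved upper bound
\[
\ch Y(\p)\le\prod_{\gamma\in T}(1-e^{-(\gamma+t(\gamma))})^{-1}=\mathcal B'
\]
holds, and if this bound meets the lower bound $\ch\mathcal A$ then the restriction map induces an isomorphism $Y(\p)\simeq R[y+\g_T]\simeq S(\g_T^*)$, and $y+\g_T$ is an algebraic (hence affine) slice for the coadjoint action. The coincidence $\ch\mathcal A=\mathcal B'$ is exactly what Lemma~\ref{calculboundsB} together with Lemma~\ref{improvboundB} have established (both bounds being $(1-e^{-2\varpi_n})^{-2}(1-e^{-4\varpi_n})^{-(n/2-1)}$ when $n=s$, and $(1-e^{-\varpi_s})^{-2}(1-e^{-2\varpi_s})^{-(n-1-s/2)}$ when $n>s$). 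Thus $Y(\p)$ is a polynomial algebra over $k$ and the slice property holds.

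Finally, to conclude the statement about $C(\p^-_{\pi'})$, I would apply the argument given in the introduction: since $Sy(\p^-_{\pi'})=Y(\p)$ is polynomial and admits a set of algebraically independent generators which are semi-invariants, the field of invariant fractions $C(\p^-_{\pi',\,\Lambda})=\mathrm{Fract}(Y(\p))$ is purely transcendental over $k$, and by \cite[Thm.~66]{O} so is $C(\p^-_{\pi'})$, giving a positive answer to Dixmier's fourth problem for $\p^-_{\pi'}$. No step poses a genuine obstacle since the delicate work (the non-degeneracy via stationary roots, the explicit computation of $t(\gamma)$ for each $\gamma\in T$, and the verification that $\gamma+t(\gamma)$ is a multiple of $\varpi_s$) has already been carried out in the preceding lemmas; the role of the theorem is to package these ingredients into the final statement.
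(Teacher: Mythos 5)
Your proposal is correct and follows exactly the paper's route: the paper's proof of Theorem~\ref{conB} is precisely the assembly of Lemmas~\ref{Cond B}, \ref{CondbisB}, \ref{setTB} (via Corollary~\ref{conc}) to get the adapted pair, then Lemmas~\ref{calculboundsB} and~\ref{improvboundB} together with the discussion at the end of Section~\ref{standardnotation} for the slice, polynomiality, and the transcendence statement. Nothing is missing.
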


\begin{proof}
Follows by the previous Lemma and by what we said at the end of  Section~\ref{standardnotation}.
\end{proof}

\begin{Remark} \rm (1) In the particular case $s=2$ polynomiality was known by~\cite{PPY} and an adapted pair was constructed in~\cite{J6bis}. Our adapted pair is equivalent to the adapted pair of Joseph $(h',\, y'=\sum_{s\in S'}x_s)$, in the sense of~\cite[2.1.1]{FJ4}. Indeed one verifies that $w=\prod_{k=1}^{[(n-1)/2]}r_{\ep_{2k+1}}\circ r_{\alpha_1}\in W_{\pi'}$ and sends bijectively $S$ to $S'$.\par

(2) The degrees of a set of homogeneous generators of $Y(\p)$ are equal to the eigenvalues of $\ad h$ on $\g_T$ computed in Lemma~\ref{eigenvalues B} each augmented by $1$.
\end{Remark}

\section{Type D, non-extremal case.}\label{SecD}

In this section, the Lie algebra $\g$ is a simple Lie algebra of type ${\rm D}_n$ ($n\ge 4$) and we consider the truncated  maximal parabolic subalgebra $\p=\p^-_{\pi',\,\Lambda}$ associated to $\pi'=\pi\setminus\{\alpha_s\}$ with $s$ even, $2\le s\le n-2$. By~\cite[5.1]{FL} the lower and upper bounds of Section~\ref{standardnotation} for $\ch Y(\p)$ do not coincide. We will construct an adapted pair $(h,\,y)$ for $\p$ and show that the algebra $Y(\p)$ is a polynomial algebra over $k$.

Let $\{\ep_i\}_{1\le i\le n}$ be an orthonormal basis for $\mathbb R^n$ that is used to expand all simple roots $\alpha_i$ ($1\le i\le n$) of $\pi$ as in~\cite[Planche IV]{BOU}.

Recall the Kostant cascade formed by the strongly orthogonal positive roots $\beta_i$, $\beta_{i'}$, $\beta_{i''}$ given in~\cite[Table I]{FL} or in~\cite[Table II]{J1}:
  note that in~\cite[Table I]{FL}, we had forgotten $\beta_{(n+1)/2}$ for $n$ odd:

-- for all $1\le i\le[n/2]$, $\beta_i=\ep_{2i-1}+\ep_{2i}$, and
if $n$ is odd, $\beta_{(n+1)/2}=\alpha_{n-2}=\ep_{n-2}-\ep_{n-1}$,

-- for all $1\le i'\le[n/2]-1$, $\beta_{i'}=\ep_{2i'-1}-\ep_{2i'}$,

-- if $n$ is even, $\beta_{(\frac{n-2}{2})''}=\alpha_{n-1}=\ep_{n-1}-\ep_{n}$.

Set $S=S^+\sqcup S^-\sqcup S^m$ with \begin{itemize}
\item $S^+=\{\ep_{2i-1}+\ep_{2i},\,\ep_{s-1}+\ep_{s+1},\,\ep_{2j}+\ep_{2j+1}\,\mid\, 1\le i\le s/2-1,\,s/2+1\le j\le[(n-2)/2]\},$
\item $S^-=\{\ep_{s-i}-\ep_i,\,-\ep_{2j-1}-\ep_{2j}\,\mid\, 1\le i\le s/2-1,\,s/2+1\le j\le[(n-1)/2]\}$,
 \item $S^m=\{\ep_s-\ep_n,\,\ep_s+\ep_n\}$.
 \end{itemize}

Clearly one has that $S\subset\Delta^+\sqcup\Delta^-_{\pi'}$ and $\lvert S\rvert=n-1=\dim\h_{\Lambda}$.

Observe that $S$ in type ${\rm D}_n$ is almost identical with the set $S$ in type ${\rm B}_n$.
We first show below that condition (1) of Lemma~\ref{non-degeneracy} holds.

\begin{lemma}
$S_{\mid\h_{\Lambda}}$ is a basis for $\h_{\Lambda}^*$.
\end{lemma}

\begin{proof}
Set $S=\{s_i\}_{1\le i\le n-1}$ with $s_{n-2}=\ep_s-\ep_n$ and $s_{n-1}=\ep_s+\ep_n$ and choose $\{h_i\}_{1\le i\le n-1}=\{\alpha_i^\vee\}_{1\le i\le n,\,i\neq s}$ as a basis of $\h_{\Lambda}$.

Denote by $s'_{n-2}=\ep_s$ and $s'_{n-1}=\ep_n$ and $s'_i=s_i$ for all $1\le i\le n-3$ and set $S'=\{s'_i\}_{1\le i\le n-1}$. It is sufficient to prove that $\det(s'_i(h_j))_{1\le i,\,j\le n-1}\neq 0$.

By ordering the basis of $\h_{\Lambda}$ as $\{\alpha_{2i}^{\vee},\,\alpha_{s-1}^{\vee},\,\alpha_{2j-1}^{\vee},\,\alpha_{s-2j-1}^{\vee},\,\alpha_{k}^{\vee}\mid 1\le i\le s/2-1,\,1\le j\le [s/4],\,\,s+1\le k\le n\}$ without repetitions
and the elements of $S'$ as $\{\beta_i,\ep_s,\,\ep_{s-i}-\ep_i,\ep_{s-1}+\ep_{s+1},\,-\ep_{s+2j-1}-\ep_{s+2j},\,\ep_{s+2j}+\ep_{s+2j+1},\,\ep_{n}\mid 1\le i\le s/2-1,\,1\le j\le (n-s-2)/2\}$ if $n$ is even
and
$\{\beta_i,\,\ep_s,\,\ep_{s-i}-\ep_i,\,\ep_{s-1}+\ep_{s+1},\,-\ep_{s+2j-1}-\ep_{s+2j},\,\ep_{s+2j}+\ep_{s+2j+1},\,-\ep_{n-2}-\ep_{n-1},\,\ep_{n}\mid 1\le i\le s/2-1,\,1\le j\le (n-s-3)/2
\}$ if $n$ is odd,
one checks that $(s'_i(h_j))_{1\le i,\,j\le n-1}=\begin{pmatrix}
A&0&0\\
*&B&0\\
*&*&C
\end{pmatrix}$
where  $A$ (resp. $B$) is a  $(s/2-1)\times(s/2-1)$ (resp. a  $(s/2)\times(s/2)$) lower triangular matrix with $1$ (resp. $-1$) on the diagonal.
Moreover $C=\begin{pmatrix}
C'&0\\
*&C''\\
\end{pmatrix}$
with $C'$ an  $(n-s-2)\times(n-s-2)$ lower triangular matrix with alternating $1$ and $-1$ on the diagonal and $C''$ a $2\times 2$ matrix. Then $\det(C')=(-1)^{[(n-s-2)/2]}$ and  $\det(C'')=(-1)^{n-s}\times 2$.
We conclude that $\det(s'_i(h_j))_{1\le i,\,j\le n-1}\neq 0$,
which completes the proof of the lemma.
\end{proof}

For each $\gamma\in S$, we will define the Heisenberg set
$\Gamma_{\gamma}$. Set $\Gamma^\pm=\bigsqcup_{\gamma\in S^\pm}\Gamma_{\gamma}$ and $\Gamma^m=\bigsqcup_{\gamma\in S^m}\Gamma_{\gamma}$.

\begin{itemize}
\item For all $i\in\mathbb N$, $1\le i\le s/2-1$, set $\Gamma_{\beta_i}=H_{\beta_i}\setminus\{\ep_{2i-1}-\ep_n,\,\ep_{2i}+\ep_n\}$ where $H_{\beta_i}$ was defined in Example~\ref{MaxHeisenbergset}. 
\item Set $\Gamma_{\ep_{s-1}+\ep_{s+1}}=\{\ep_{s-1}+\ep_{s+1},\,\ep_{s-1}+\ep_i,\,\ep_{s+1}-\ep_i,\,\ep_{s-1}-\ep_j,\,\ep_{s+1}+\ep_j\mid s+2\le i\le n,\,s+2\le j\le n-1\}.$
\item For all $i\in\mathbb N$, $s/2+1\le i\le[(n-2)/2]$, set
$\Gamma_{\ep_{2i}+\ep_{2i+1}}=\{\ep_{2i}+\ep_{2i+1},\,\ep_{2i}-\ep_j,\,\ep_j+\ep_{2i+1},\,\ep_{2i}+\ep_k,\,\ep_{2i+1}-\ep_k\mid 2i+2\le j\le n,\, 2i+2\le k\le n-1\}.$

\item For all $i\in\mathbb N$, $1\le i\le s/2-1$, set $\Gamma_{\ep_{s-i}-\ep_i}=\{\ep_{s-i}-\ep_i,\,\varepsilon_j-\varepsilon_i,\,\varepsilon_{s-i}-\varepsilon_j\mid i+1\le j\le s-i-1\}.$
\item For all $i\in\mathbb N$, $s/2+1\le i\le [(n-1)/2]$, set 
$\Gamma_{-\ep_{2i-1}-\ep_{2i}}=\{-\ep_{2i-1}-\ep_{2i},\,-\ep_{2i-1}-\ep_j\;,\;\ep_j-\ep_{2i},\,-\ep_{2i-1}+\ep_k,\,-\ep_k-\ep_{2i}\mid 2i+1\le j\le n-1,\,2i+1\le k\le n\}.$

\item Set 
$\Gamma_{\ep_s-\ep_n}=\{\ep_s-\ep_n,\,\ep_s-\ep_{2i-1},\,\ep_{2i-1}-\ep_n,\,\ep_s+\ep_{2j+1},\,-\ep_{2j+1}-\ep_n\mid 1\le i\le [n/2],\,i\neq s/2+1,\,s/2\le j\le [(n-2)/2]\}$.
\item Set $\Gamma_{\ep_s+\ep_n}=\{\ep_s+\ep_n,\,\ep_s-\ep_{2i},\,\ep_{2i}+\ep_n,\,\ep_s-\ep_{s+1},\,\ep_{s+1}+\ep_n,\,\ep_s+\ep_{2j},\,-\ep_{2j}+\ep_n\mid 1\le i\le [(n-1)/2],\,i\neq s/2,\,s/2+1\le j\le[( n-1)/2]\}.$
\end{itemize}

By construction, the sets $\Gamma_{\gamma}$, $\gamma\in S$, are disjoint Heisenberg sets of centre $\gamma$, included in $\Delta^+\sqcup\Delta^-_{\pi'}$.

We have that $\Gamma^+=\bigsqcup_{1\le i\le s/2-1}\Gamma_{\beta_i}\sqcup\Gamma_{\ep_{s-1}+\ep_{s+1}}\sqcup\bigsqcup_{s/2+1\le i\le [(n-2)/2]}\Gamma_{\ep_{2i}+\ep_{2i+1}}$, that
$\Gamma^-=\bigsqcup_{1\le i\le s/2-1}\Gamma_{\ep_{s-i}-\ep_i}\sqcup\bigsqcup_{s/2+1\le i\le[(n-1)/2]}\Gamma_{-\ep_{2i-1}-\ep_{2i}}$ and $\Gamma^m=\Gamma_{\ep_s-\ep_n}\sqcup\Gamma_{\ep_s+\ep_n}$.

We show below that conditions (2), (3), (4) of Lemma~\ref{non-degeneracy} are satisfied.
Denote by $\pi'_1$ the connected component of $\pi'$ of type ${\rm A}_{s-1}$ and by $\pi'_2$ the connected component of $\pi'$ of type ${\rm D}_{n-s}$ (or $A_1\times A_1$ if $s=n-2$).

\begin{lemma}\label{Cond D}
Let $\alpha\in O^\pm$. Then $S_{\alpha}\cap O^\pm=\{\theta(\alpha)\}$ and if $S_{\alpha}\cap O^m\neq\emptyset$ then $\alpha\in O_{st}$ or $\alpha\in O_{cyc}$ or there exists $\beta\in O_{cyc}\cap O_3$ such that $\alpha=\tilde \beta$ or $\theta(\alpha)=\tilde\beta$, where $\tilde\beta\in S_{\beta}\cap O_2$ is such that $\theta(\tilde\beta)\in O_1$. \end{lemma}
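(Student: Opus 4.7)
The proof is a case analysis on which Heisenberg set $\Gamma_\gamma$, $\gamma\in S^+\sqcup S^-$, contains $\alpha$. The overall structure is parallel to that of Lemma~\ref{Cond B}, but the presence of two centers $\ep_s-\ep_n$ and $\ep_s+\ep_n$ in $S^m$ (instead of a single $\ep_s$ as in type B) forces us to invoke both the stationary-root machinery of Section~\ref{sr} and the cyclic-root machinery of Section~\ref{Seccyc}.

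First I would verify $S_\alpha\cap O^\pm=\{\theta(\alpha)\}$. For $\alpha\in\Gamma_{\beta_i}^0$ and $\beta\in\Gamma_{\beta_j}^0$ the maximality of the Kostant-cascade Heisenberg sets (Lemma~3(5) of~\cite{FL}) forces $\alpha+\beta\in H_{\beta_{\min(i,j)}}$; since the unique element of $S$ lying in $H_{\beta_{\min(i,j)}}$ is $\beta_{\min(i,j)}$ itself, one concludes $i=j$ and $\beta=\theta(\alpha)$. The mixed-type possibilities (where $\beta$ lies in $\Gamma_{\ep_{s-1}+\ep_{s+1}}^0$ or in some $\Gamma_{\ep_{2j}+\ep_{2j+1}}^0$) are eliminated by inspecting the $\ep_k$-components and noting that no element of $S^+$ contains both $\ep_{s\pm 1}$ and $\ep_i$ for $i\ge s+2$. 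The negative analogues are handled using that $\Gamma_{\ep_{s-i}-\ep_i}\subset\Delta_{\pi'_1}^-$ while $\Gamma_{-\ep_{2j-1}-\ep_{2j}}\subset\Delta_{\pi'_2}^-$, so no sum of a root from one of these subsets with a root from the other can belong to $S$.

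Next, for $\alpha\in O^\pm$ with $S_\alpha\cap O^m\ne\emptyset$, I compute the sequences $(\alpha^i)_{i\in\mathbb N}$ and $(\alpha^{(i)})_{i\in\mathbb N}$ directly. By the preceding step, $S_\alpha\subset\{\theta(\alpha)\}\sqcup(S_\alpha\cap O^m)$, and since $|S^m|=2$ one has $|S_\alpha|\le 3$, so $\alpha\in O_1\sqcup O_2\sqcup O_3$ throughout the analysis. In most instances, one quickly reaches a root whose $\theta$-image lies in $O_1$---typically a vertex containing $\ep_{s-1}+\ep_s$, $\ep_s+\ep_{s+1}$, or $\ep_i$ with $i\ge s/2$---so the sequence stabilizes and $\alpha\in O_{st}$, paralleling Lemma~\ref{CondbisB}. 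The new feature in type D is that the two centers $\ep_s\pm\ep_n$ can interact symmetrically with a triple $(\alpha,\beta,\gamma)\subset O$, producing a closed triangle satisfying (i)--(iii) of Definition~\ref{cyc}. I would list these triangles explicitly and verify conditions (iv)--(vi): the six vertices are checked to lie in $O_2\sqcup O_3$ and to be distinct, and for each vertex landing in $O_3$ the companion root $\tilde\delta\in S_\delta\cap O_2$ with $\theta(\tilde\delta)\in O_1$ is exhibited. Roots of the form $\tilde\delta$ or $\theta(\tilde\delta)$ occurring in these triangles are then precisely the ones covered by the third clause of the conclusion.

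The principal obstacle is combinatorial bookkeeping: identifying which triples $(\alpha,\beta,\gamma)$ generate genuine cyclic configurations, distinguishing the $O_2$-only cases from those where some vertex is in $O_3$, and confirming that in the latter the required $\tilde\delta$ exists (this is what makes Definition~\ref{cyc}(vi) nontrivial). Once the cyclic triples and the associated $\tilde\delta$'s are catalogued, all verifications reduce to finite checks on the indices; the conceptual point is simply that cyclic roots genuinely arise in type D, which is exactly what motivated Section~\ref{Seccyc}.
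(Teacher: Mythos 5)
Your plan follows the paper's proof essentially verbatim: a case-by-case analysis over the Heisenberg sets $\Gamma_\gamma$, with the cascade argument (\cite[Lemma 3(5)]{FL}) and the $\pi'_1$/$\pi'_2$ separation giving $S_\alpha\cap O^\pm=\{\theta(\alpha)\}$, and explicit cyclic triangles built from the two centres $\ep_s\pm\ep_n$ (together with the $\tilde\beta$ clause and stationarity where it occurs) handling the case $S_\alpha\cap O^m\neq\emptyset$. One small caveat: your inclusion $S_\alpha\subset\{\theta(\alpha)\}\sqcup(S_\alpha\cap O^m)$ does not follow from the first step (sums of a root in $O^+$ with a root in $O^-$ are not excluded there, and need not be for this lemma, being dealt with later by the degree argument in Lemma~\ref{non-degeneracy}), but it is only used for the auxiliary bound $|S_\alpha|\le 3$; the paper instead checks the $O_2$/$O_3$ memberships of the relevant roots directly, exactly as you propose to do for the triangle vertices.
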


\begin{proof}

Let $\alpha\in O^\pm$ and assume that there exists $\beta\in O^\pm\sqcup O^m$ such that $\alpha+\beta\in S$.

\underline{First case} : $\alpha\in\Gamma^0_{\beta_i}\subset O^+$ with $1\le i\le s/2-1$. As in the proof of Lemma~\ref{Cond B} one checks that, if $\beta\in O^+\sqcup(O^m\cap\Delta^+)$, then $\beta=\theta(\alpha)$. 
Assume now that $\beta\in O^m\cap\Delta^-$ which implies that $S_{\alpha}\cap O^m\neq\emptyset$. Then four cases occur : $\alpha=\ep_{2i}-\ep_n$ and $\beta=\ep_s-\ep_{2i}\in\Gamma^0_{\ep_s+\ep_n}$, $\alpha=\ep_{2i-1}+\ep_n$ and $\beta=\ep_s-\ep_{2i-1}\in\Gamma^0_{\ep_s-\ep_n}$, $\alpha=\ep_{2i-1}-\ep_s$ and $\beta=\ep_s-\ep_{s-2i+1}$ with $s-2i+1<2i-1$, $\alpha=\ep_{2i}-\ep_s$ and $\beta=\ep_s-\ep_{s-2i}$ with $s-2i<2i$.

Let consider just one of the two first cases, that is when $\alpha=\ep_{2i-1}+\ep_n$ and $\beta=\ep_s-\ep_{2i-1}\in\Gamma^0_{\ep_s-\ep_n}$.
Then $\alpha+\beta=\ep_s+\ep_n$, $\theta(\alpha)=\ep_{2i}-\ep_n$ and $\theta(\beta)=\ep_{2i-1}-\ep_n$. One verifies that there exists $\gamma=\ep_s-\ep_{2i}\in\Gamma^0_{\ep_s+\ep_n}$ such that $\theta(\alpha)+\gamma=\ep_s-\ep_n$, $\theta(\beta)+\theta(\gamma)=\ep_{2i-1}+\ep_{2i}$ and that $\alpha,\,\theta(\alpha),\theta(\beta),\,\theta(\gamma)\in O_2$. If $i=1$ or $s-2i+1\le 2i-1$ (resp. $s-2i\le 2i$) then $\beta\in O_2$ (resp. $\gamma\in O_2$). Otherwise $\beta\in O_3$, $\tilde\beta=\ep_{s-2i+1}-\ep_s\in O_2\cap S_{\beta}$ and $\theta(\tilde\beta)=\ep_{s-2i+2}+\ep_s\in O_1$ (resp. $\gamma\in O_3$, $\tilde\gamma=\ep_{s-2i}-\ep_s\in O_2\cap S_{\gamma}$ and $\theta(\tilde\gamma)=\ep_{s-2i-1}+\ep_s\in O_1$). Hence $\alpha\in O_{cyc}$ and by remark~\ref{remcyc} (1), the roots $\beta,\,\gamma,\,\theta(\alpha),\,\theta(\beta),\,\theta(\gamma)$ are also cyclic roots.

Let consider just one of the two last cases. Suppose that $\alpha=\ep_{2i-1}-\ep_s$ and $\beta=\ep_s-\ep_{s-2i+1}$ with $s-2i+1<2i-1$.
By the above, $\beta\in O_{cyc}\cap O_3$ and $\tilde\beta=\alpha$. 

\underline{Second case} : $\alpha\in\Gamma^0_{\ep_{s-1}+\ep_{s+1}}\subset O^+$. One easily checks that if $\beta\in O^+$ then $\beta=\theta(\alpha)$.  Now if $\beta\in\Gamma^0_{\ep_s-\ep_n}$ then necessarily $\alpha=\ep_{s-1}+\ep_n$,
$\beta=\ep_s-\ep_{s-1}$ and $\alpha+\beta=\ep_s+\ep_n$. One checks that $\gamma=\ep_s-\ep_{s+1}\in\Gamma^0_{\ep_s+\ep_n}$ verifies $\theta(\alpha)+\gamma=\ep_s-\ep_n$
and $\theta(\beta)+\theta(\gamma)=\ep_{s-1}+\ep_{s+1}$. Moreover $\alpha,\,\theta(\alpha),\,\beta,\,\theta(\beta),\,\gamma,\,\theta(\gamma)\in O_2$. Hence $\alpha\in O_{cyc}$. A similar computation shows that if $\beta\in\Gamma^0_{\ep_s+\ep_n}$, then $\alpha\in O_{cyc}$. 

\underline{Third case} : $\alpha\in\Gamma^0_{\ep_{2i}+\ep_{2i+1}}\subset O^+$ with $s/2+1\le i\le [(n-2)/2]$. One easily checks that if $\beta\in O^+$ then $\beta=\theta(\alpha)$. 

Assume now that $\beta\in\Gamma^0_{\ep_s-\ep_n}$. Then necessarily $\alpha=\ep_{2i+1}+\ep_n$, $\beta=\ep_s-\ep_{2i+1}$ and $\alpha+\beta=\ep_s+\ep_n$. Then $\gamma=\ep_s-\ep_{2i}\in\Gamma^0_{\ep_s+\ep_n}$ verifies $\theta(\alpha)+\gamma=\ep_s-\ep_n$ and $\theta(\beta)+\theta(\gamma)=\ep_{2i}+\ep_{2i+1}$. Moreover one has that
$\alpha,\,\theta(\alpha),\,\beta,\,\theta(\beta),\,\gamma,\,\theta(\gamma)\in O_2$. Hence $\alpha\in O_{cyc}$. A similar computation shows that if $\beta\in\Gamma^0_{\ep_s+\ep_n}$, then $\alpha\in O_{cyc}$. 

\underline{Fourth case} : $\alpha\in\Gamma^0_{\ep_{s-i}-\ep_i}\subset O^-$ with $1\le i\le s/2-1$. If $\beta\in\Gamma^0_{\ep_{s-j}-\ep_j}$ with $1\le j\le s/2-1$ then one checks that $j=i$ and $\alpha+\beta=\ep_{s-i}-\ep_i$ thus $\beta=\theta(\alpha)$. Moreover one checks that it is not possible that $\beta\in\Gamma^0_{-\ep_{2j-1}-\ep_{2j}}$ with $s/2+1\le j\le[(n-1)/2]$ since $\alpha\in\Delta^-_{\pi'_1}$ whilst $\beta\in\Delta^-_{\pi'_2}$ nor it is possible that
$\beta\in O^m$. 

\underline{Fifth case} : $\alpha\in\Gamma^0_{-\ep_{2i-1}-\ep_{2i}}\subset O^-$ with $s/2+1\le i\le [(n-1)/2]$. If $\beta\in\Gamma^0_{-\ep_{2j-1}-\ep_{2j}}$ with $s/2+1\le j\le [(n-1)/2]$ then one checks that $i=j$ and $\alpha+\beta=-\ep_{2i-1}-\ep_{2i}$, thus $\beta=\theta(\alpha)$. 

Assume now that $\beta\in\Gamma^0_{\ep_s-\ep_n}$. Then necessarily $\alpha=-\ep_{2i-1}+\ep_n$ and $\beta=\ep_s+\ep_{2i-1}$. Moreover $\gamma=\ep_s+\ep_{2i}\in\Gamma^0_{\ep_s+\ep_n}$ verifies $\theta(\alpha)+\gamma=\ep_s-\ep_n$ and $\theta(\beta)+\theta(\gamma)=-\ep_{2i-1}-\ep_{2i}$. All these roots belong to $O_2$. Hence $\alpha\in O_{cyc}$. A similar computation shows that  if $\beta\in\Gamma^0_{\ep_s+\ep_n}$, then $\alpha\in O_{cyc}$. \end{proof}

\begin{lemma}\label{Condbis D}
Let $\alpha\in O^m$. Then $\alpha\in O_{st}$ or $\alpha\in O_{cyc}$ or there exists $\beta\in O_3\cap O_{cyc}$ such that $\alpha=\tilde\beta$ or $\theta(\alpha)=\tilde\beta$.
\end{lemma}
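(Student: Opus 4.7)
The plan is to exhaust the elements of $O^m=\Gamma^0_{\ep_s-\ep_n}\sqcup\Gamma^0_{\ep_s+\ep_n}$ by a case analysis, in the spirit of the proof of Lemma~\ref{CondbisB}, with the crucial additional input that Remark~\ref{remcyc}(1) propagates cyclicity through each sextuple $C_\delta$: once a cyclic sextuple has been exhibited, all six of its roots lie in $O_{cyc}$.

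First I would collect, from each of the five cases in the proof of Lemma~\ref{Cond D}, the roots of $O^m$ that appear inside the sextuples $C_\delta$ produced there. A short bookkeeping shows that in each such sextuple four of the six roots lie in $O^m$, and aggregating the resulting lists yields the cyclicity of $\ep_s-\ep_{2i-1}$, $\ep_{2i-1}-\ep_n$, $\ep_s-\ep_{2i}$, $\ep_{2i}+\ep_n$ for $1\le i\le s/2-1$ (from the first case); of $\ep_s-\ep_{s-1}$, $\ep_{s-1}-\ep_n$, $\ep_s-\ep_{s+1}$, $\ep_{s+1}+\ep_n$ (from the second case); of $\ep_s-\ep_{2i+1}$, $\ep_{2i+1}-\ep_n$, $\ep_s-\ep_{2i}$, $\ep_{2i}+\ep_n$ for $s/2+1\le i\le[(n-2)/2]$ (from the third case); and of $\ep_s+\ep_{2i-1}$, $-\ep_{2i-1}-\ep_n$, $\ep_s+\ep_{2i}$, $-\ep_{2i}+\ep_n$ for $s/2+1\le i\le[(n-1)/2]$ (from the fifth case). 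Inspection of the same proof also shows that every feeder root $\tilde\delta$ encountered there sits inside some $\Gamma^0_{\beta_j}\subset O^+$; since $\theta$ respects each Heisenberg set, neither $\alpha$ nor $\theta(\alpha)$ can equal such a $\tilde\delta$ when $\alpha\in O^m$, so the third (``feeder'') alternative of the lemma is vacuous for $\alpha\in O^m$ and we only need to place the remaining $\alpha\in O^m$ into $O_{st}$.

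For those remaining $\alpha\in O^m$ --- the ``extremal'' roots of the two Heisenberg sets for which $S_\alpha\cap(O^+\sqcup O^-)=\emptyset$, together with the boundary roots missed by the preceding step (for instance $\ep_s+\ep_{s+1}$ and its $\theta$-partner when $s=n-2$) --- the argument proceeds exactly as in Lemma~\ref{CondbisB}. Direct inspection of $S_\alpha$ shows that it reduces to $\{\theta(\alpha)\}$ inside $O^m$, and a type-${\rm D}$ analogue of Remark~\ref{rem O}(2) places $\theta(\alpha)$ in $O_1\sqcup O_2$. The sequence $(\alpha^i)_{i\in\mathbb N}$ therefore stabilizes after at most one or two steps, and $(\alpha^{(i)})_{i\in\mathbb N}$ is handled symmetrically by walking back into roots of the previous regime. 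All intermediate roots remain in $O_1\sqcup O_2$, so by Definition~\ref{defsr} we conclude $\alpha\in O_{st}$.

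The principal obstacle is bookkeeping near the boundary indices $i=s/2$, $i=s/2+1$ and $i\approx[n/2]$, where the definitions of $\Gamma_{\ep_s\pm\ep_n}$ have been carefully tailored and interact non-uniformly with $\Gamma_{\ep_{s-1}+\ep_{s+1}}$, $\Gamma_{\ep_{2i}+\ep_{2i+1}}$ and $\Gamma_{-\ep_{2i-1}-\ep_{2i}}$. Careful index tracking is required to rule out a hidden longer cycle that would escape the trichotomy, but no conceptually new phenomenon beyond those already isolated in Lemma~\ref{CondbisB} and in the proof of Lemma~\ref{Cond D} is expected.
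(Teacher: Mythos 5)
Your proposal is correct and takes essentially the same route as the paper: you place almost every root of $O^m$ inside one of the cyclic sextuples already exhibited in the proof of Lemma~\ref{Cond D} and propagate cyclicity via Remark~\ref{remcyc}(1), then dispose of the leftover boundary pair (namely $\ep_s+\ep_{n-1}$ and $-\ep_{n-1}-\ep_n$ when $n$ is even, resp. $\ep_s-\ep_{n-1}$ and $\ep_{n-1}+\ep_n$ when $n$ is odd, both lying in $O_1$) as stationary, which is exactly the paper's exceptional fourth case and its mirror for $\Gamma^0_{\ep_s+\ep_n}$. The only differences are organizational --- aggregating the sextuple coverage lists instead of running through $\Gamma^0_{\ep_s\pm\ep_n}$ root by root --- and your (correct) remark that the feeder alternative is never realized for roots of $O^m$, which is consistent with the paper's proof.
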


\begin{proof}
Assume that $\alpha\in\Gamma^0_{\ep_s-\ep_n}\subset O^m$. 

\underline{ First case} : $\alpha=\ep_s-\ep_{2i-1}$ with $1\le i\le s/2-1$. Then there exists $\beta=\ep_{2i-1}+\ep_n\in O^+$ such that $\alpha+\beta\in S$ and first case of the proof of Lemma~\ref{Cond D}, $\beta\in O_{cyc}$ and $\alpha\in O_{cyc}$. If now $\alpha=\ep_{2i-1}-\ep_n$ with $1\le i\le s/2-1$ then $\alpha=\theta(\ep_s-\ep_{2i-1})\in O_{cyc}$ (by remark~\ref{remcyc} (1))
since $\ep_s-\ep_{2i-1}\in O_{cyc}$.

\underline{Second case} : $\alpha=\ep_s-\ep_{s-1}$.  By second case of the proof of Lemma~\ref{Cond D}, $\alpha\in O_{cyc}$. If now $\alpha=\ep_{s-1}-\ep_n$ then $\alpha=\theta(\ep_s-\ep_{s-1})$, thus $\alpha\in O_{cyc}$ by remark~\ref{remcyc} (1).

\underline{Third case} : $\alpha=\ep_s-\ep_{2i-1}$ with $s/2+2\le i\le [n/2]$. Then $\beta=\ep_{2i-1}+\ep_n\in\Gamma^0_{\ep_{2i-2}+\ep_{2i-1}}\subset O^+$ is such that (third case of the proof of Lemma~\ref{Cond D}) $\beta\in O_{cyc}$ and $\alpha\in C_{\beta}$, thus by remark~\ref{remcyc} (1), one has that $\alpha\in O_{cyc}$. If now $\alpha=\ep_{2i-1}-\ep_n$, with  $s/2+2\le i\le [n/2]$, then
$\alpha=\theta(\ep_s-\ep_{2i-1})$ and by remark~\ref{remcyc} (1), one has that $\alpha\in O_{cyc}$.

\underline{Fourth case} : $\alpha=\ep_s+\ep_{2i+1}$ with $s/2\le i\le[(n-2)/2]$. If $i\neq[(n-2)/2]$ or $n$ odd then $\beta=-\ep_{2i+1}+\ep_n\in\Gamma^0_{-\ep_{2i+1}-\ep_{2i+2}}\subset O^-$ is such that  (fifth case of the proof of Lemma~\ref{Cond D}), $\beta\in O_{cyc}$ and $\alpha\in C_{\beta}$ and then, by remark~\ref{remcyc} (1), $\alpha\in O_{cyc}$.

If $i=(n-2)/2$ and $n$ even, then $\alpha=\ep_s+\ep_{n-1}\in O_1$ and $\theta(\alpha)=-\ep_{n-1}-\ep_n\in O_1$ then in this case $\alpha\in O_{st}$.

Now suppose that $\alpha=-\ep_{2i+1}-\ep_n$ with $s/2\le i\le[(n-2)/2]$. If $i\neq[(n-2)/2]$ or $n$ odd, then $\alpha=\theta(\ep_s+\ep_{2i+1})$, hence by remark~\ref{remcyc} (1), $\alpha\in O_{cyc}$.

Finally if $\alpha=-\ep_{n-1}-\ep_n$ (that is, when $n$ is even) then $\alpha=\theta(\ep_s+\ep_{n-1})\in O_{st}$ (since $\theta(\alpha)\in O_{st}$).\par

Similar computations may be done for $\alpha\in\Gamma^0_{\ep_s+\ep_n}$; we leave the details as an exercise. We conclude that condition (4) of Lemma~\ref{non-degeneracy} holds.
\end{proof}

Let $T$ denote the complement of the set $\Gamma=\Gamma^+\sqcup\Gamma^-\sqcup\Gamma^m$ in $\Delta^+\sqcup\Delta^-_{\pi'}$.

\begin{lemma}\label{setTD}
One has that $\lvert T\rvert=\ind\p$.
\end{lemma}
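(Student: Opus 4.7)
My plan is to imitate the proof of Lemma~\ref{setTB}: first describe $T$ explicitly, then count, and finally compute $\ind\,\p$ via the orbit formula $\ind\,\p=\lvert E(\pi')\rvert$ and match.

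For the explicit description, I expect (and would verify case by case from the definitions of the $\Gamma_\gamma$, as confirmed by the small cases $\mathrm{D}_4,\,s=2$ and $\mathrm{D}_6,\,s=2$) that
$$T=\{\ep_{s-1}+\ep_s,\ \ep_{s-1}-\ep_{s+1},\ \ep_{2i-1}-\ep_{2i},\ -\ep_{s+2j-1}+\ep_{s+2j},\ \ep_{s+2k}-\ep_{s+2k+1}\},$$
with $1\le i\le s/2$, $1\le j\le [(n-s)/2]$ and $1\le k\le [(n-s-1)/2]$. The roots $\ep_{2i-1}-\ep_{2i}$ for $1\le i\le s/2-1$ are orthogonal to $\beta_i$, hence excluded from $H_{\beta_i}\supset\Gamma_{\beta_i}$; their indices $\le s-1$ keep them out of every other $\Gamma_\gamma$. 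The boundary roots $\ep_{s-1}\pm\ep_s$ and $\ep_{s-1}-\ep_{s+1}$ escape all $\Gamma_\gamma$ by similar orthogonality and index checks, the key task being to inspect $\Gamma_{\ep_{s-1}+\ep_{s+1}}$ together with the two mixed sets $\Gamma_{\ep_s\pm\ep_n}$. Finally, the ``gap'' roots $-(\ep_{s+2j-1}-\ep_{s+2j})$ and $\ep_{s+2k}-\ep_{s+2k+1}$ inside the $\mathrm{D}_{n-s}$ block $\pi'_2$ are orthogonal to the centres in $S$ of their respective Heisenberg sets and do not appear in $\Gamma^m$. Using the identity $[(n-s)/2]+[(n-s-1)/2]=n-s-1$ one obtains $\lvert T\rvert = 2+s/2+(n-s-1)=n-s/2+1$.

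To match this, I compute $\ind\,\p=\lvert E(\pi')\rvert$. On $\pi'_1\simeq A_{s-1}$ the involution $\mathbf{i}$ sends $\alpha_t$ to $\alpha_{s-t}$, yielding $s/2$ orbits (including the singleton $\{\alpha_{s/2}\}$). On $\pi'_2$ (type $\mathrm{D}_{n-s}$, or $A_1\times A_1$ when $s=n-2$), a case split on the parity of $n$ shows that $\mathbf{ij}$ acts as the identity: if $n$ is even then $n-s$ is even and both $\mathbf{j}$ and $\mathbf{i}_{\vert\pi'_2}$ are the identity; if $n$ is odd then the hypothesis $s$ even forces $n-s$ odd, so $\mathbf{j}$ and $\mathbf{i}_{\vert\pi'_2}$ both swap $\alpha_{n-1}\leftrightarrow\alpha_n$ and cancel. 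Thus $\pi'_2$ contributes $n-s$ singleton orbits, and since $\mathbf{j}(\alpha_s)=\alpha_s\notin\pi'$ also $\{\alpha_s\}$ is a one-element orbit. The total is $s/2+(n-s)+1=n-s/2+1=\lvert T\rvert$, as required.

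The main obstacle will be the bookkeeping in the description of $T$: compared with type B, both mixed Heisenberg sets $\Gamma_{\ep_s\pm\ep_n}$ are large and contain roots of both signs, so one must be meticulous to check that none of the proposed elements of $T$ secretly lies in $\Gamma^m$, and one must equally ensure that every element of $\Delta^+\sqcup\Delta^-_{\pi'}$ not listed above is actually absorbed by some $\Gamma_\gamma$.
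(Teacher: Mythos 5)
Your proposal is correct and takes essentially the same route as the paper: the explicit set $T$ you write down is exactly the one the paper records (after reindexing it is the type ${\rm B}_n$ set for the same $s$), giving $\lvert T\rvert=n-s/2+1$, and your $\langle {\bf ij}\rangle$-orbit count $s/2+(n-s)+1=n-s/2+1$ matches the paper's, which simply cites the type ${\rm B}$ computation. Your parity analysis of ${\bf i}$ and ${\bf j}$ on the ${\rm D}_{n-s}$ component merely makes explicit the paper's assertion that the orbits coincide with those in type ${\rm B}_n$.
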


\begin{proof}
One checks that $T=\{\ep_{s-1}+\ep_s,\,\ep_{s-1}-\ep_{s+1},\,\ep_{2i-1}-\ep_{2i},\,\ep_{2j}-\ep_{2j+1},\,-\ep_{2k+1}+\ep_{2k+2}\,\mid\, 1\le i\le s/2,\,s/2+1\le j\le [(n-1)/2],\,s/2\le k\le[(n-2)/2]\}$. Comparing with the proof of Lemma~\ref{setTB}, we see that it coincides with the set $T$ in type ${\rm B}_n$ for the same $s$. Hence $\lvert T\rvert=n-s/2+1$.

Moreover the $\langle {\bf ij}\rangle$-orbits are the same as in type ${\rm B}_n$ hence they are $n-s/2+1$ in number. Thus $\lvert T\rvert=\ind\p^-_{\pi',\, \Lambda}$.
\end{proof}

\begin{Remark} \rm
All conditions of Lemma~\ref{refineregular} are satisfied. Hence defining $h\in\h_{\Lambda}$ by $\gamma(h)=-1$ for all $\gamma\in S$,
and setting $y=\sum_{\gamma\in S}x_{\gamma}$ we obtain an adapted pair $(h,\,y)$ for $\p^-_{\pi',\,\Lambda}$.
\end{Remark}

 As in type ${\rm B}$, we give an expansion of the semisimple element $h$ and of its eigenvalues on the set $T$:

\begin{lemma}\label{eigenvalues D}
$$\begin{array}{lll}
h=\sum_{k=1}^{[s/4]}\bigl(\frac{s}{2}+2k-1\bigr)\ep_{2k-1}+\sum_{k=[s/4]+1}^{s/2-1}\bigl(\frac{3s}{2}-2k\bigr)\ep_{2k-1}\\
-\sum_{k=1}^{[s/4]}\bigl(\frac{s}{2}+2k\bigr)\ep_{2k}
-\sum_{k=[s/4]+1}^{s/2-1}\bigl(\frac{3s}{2}+1-2k\bigr)\ep_{2k}+\frac{s}{2}\ep_{s-1}-\ep_s\\
+\sum_{k=1}^{[(n-s)/2]}\bigl(-2k+1-\frac{s}{2}\bigr)\ep_{s+2k-1}+\sum_{k=1}^{[(n-s-1)/2]}\bigl(2k+\frac{s}{2}\bigr)\ep_{s+2k}.
\end{array}$$
The eigenvalues of $\ad h$ on $\g_T$ are:
\begin{itemize}
\item $s+4i-1=(\ep_{2i-1}-\ep_{2i})(h)$ for all $i$, with $ 1\le i\le [s/4]$.
\item $3s-4i+1=(\ep_{2i-1}-\ep_{2i})(h)$ for all $i$, with $[s/4+1]\le i\le s/2-1$.
\item $s/2+1=(\ep_{s-1}-\ep_s)(h)$.
\item $s/2-1=(\ep_{s-1}+\ep_s)(h)$.
\item $n-s/2-1=\begin{cases}
(-\ep_{n-1}+\ep_n)(h)\; \hbox{\rm {if $n$ even.}}\\
(\ep_{n-1}-\ep_n)(h)\; \hbox{\rm { if $n$ odd.}}
\end{cases}$
\item $s+1=(\ep_{s-1}-\ep_{s+1})(h)$.
\item $s+4j-1=(-\ep_{s+2j-1}+\ep_{s+2j})(h)$ for all $j$, with $1\le j\le[(n-s-1)/2]$.
\item $s+4j+1=(\ep_{s+2j}-\ep_{s+2j+1})(h)$ for all $j$, with $1\le j\le[(n-s-2)/2]$.
\end{itemize}
From the last three equalities we have that $s+2k-1$ is an eigenvalue of $\ad h$ on $\g_T$, for all $k$, with $1\le k\le n-s-1$.
\end{lemma}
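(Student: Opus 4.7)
The plan is a direct verification. By the first lemma of this section, $S\rvert_{\h_\Lambda}$ is a basis of $\h_\Lambda^*$, so the semisimple element $h\in\h_\Lambda$ of the adapted pair is uniquely determined by the $n-1$ relations $\gamma(h)=-1$ for $\gamma\in S$. Writing $h=\sum_{i=1}^n c_i\,\ep_i$ in the basis $(\ep_i)$ (via the Killing-form identification $\h\simeq\h^*$, so that $\ep_j(h)=c_j$), I will check that the coefficients $c_i$ read off from the statement satisfy these relations together with the membership $h\in\h_\Lambda$, which for type ${\rm D}_n$ with $s\le n-2$ amounts to the single condition $\varpi_s(h)=c_1+\cdots+c_s=0$.

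The verification splits along $S=S^+\sqcup S^-\sqcup S^m$. The two roots $\ep_s\pm\ep_n$ of $S^m$ pin down $c_s=-1$ and $c_n=0$, in agreement with the formula. For the pairs $\ep_{2i-1}+\ep_{2i}\in S^+$ with $1\le i\le s/2-1$, the piecewise expansion of $h$ (split at $i=[s/4]$) is arranged precisely so that $c_{2i-1}+c_{2i}=-1$ in both sub-cases; the root $\ep_{s-1}+\ep_{s+1}$ is handled by combining $c_{s-1}=s/2$ with the value $c_{s+1}=-1-s/2$ coming from the general formula at $k=1$; and the positive roots $\ep_{2j}+\ep_{2j+1}$ above $s$ reduce to cancelling the two general formulas for odd and even indices. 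The $S^-$ identities $c_{s-i}-c_i=-1$ and $-c_{2j-1}-c_{2j}=-1$ are verified by the same substitutions, and the global relation $c_1+\cdots+c_s=0$ then falls out of the $S^+\cup S^-$ identities (by telescoping the $c_{s-i}-c_i$ and $c_{2i-1}+c_{2i}$ identities across the block $\{1,\dots,s-1\}$, together with $c_s=-1$).

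With $h$ in hand, the eigenvalues of $\ad h$ on $\g_T$ are obtained by evaluating $\gamma(h)=\sum a_ic_i$ for each $\gamma=\sum a_i\ep_i\in T$, using the enumeration of $T$ given in the proof of Lemma~\ref{setTD}. Each family in $T$ produces one of the claimed eigenvalues: $(\ep_{2i-1}-\ep_{2i})(h)$ splits at $i=[s/4]$ into $s+4i-1$ and $3s-4i+1$; $(\ep_{s-1}\pm\ep_s)(h)=s/2\pm 1$; $(\ep_{s-1}-\ep_{s+1})(h)=s+1$; the two families indexed by $j$ above $s$ give $s+4j-1$ and $s+4j+1$; and the single root $\pm(\ep_{n-1}\mp\ep_n)$ in $T$ (the sign depending on the parity of $n$) yields $n-s/2-1$ after substituting $c_{n-1}$ from the corresponding sub-sum and $c_n=0$.

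The only real obstacle is bookkeeping: the parity of $n$, the $[s/4]$-dichotomy inside $\{1,\dots,s-1\}$, and the cutoff at $s/2$ among the indices above $s$ must all be tracked in parallel, and for small $s$ some ranges degenerate or the roots $\ep_{s-1}\pm\ep_s$ coincide with $\ep_{2i-1}\pm\ep_{2i}$ at $i=s/2$. No conceptual difficulty arises; the formula for $h$ has been tuned so that each case collapses to a one-line identity, and I would organise the computation block by block on $\{1,\dots,s-1\}$, $\{s-1,s,s+1\}$, and $\{s+1,\dots,n\}$ to keep matters manageable.
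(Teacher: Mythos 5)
Your proposal is correct and is essentially the paper's own argument: the paper offers no more than a direct computation for this lemma (as for its type-B analogue), namely checking that the displayed coefficients satisfy $\gamma(h)=-1$ for all $\gamma\in S$ together with $h\in\h_{\Lambda}$ (equivalently $\varpi_s(h)=0$), and then evaluating each root of $T$ on $h$. One small caution: the condition $c_1+\cdots+c_s=0$ cannot be deduced from the relations $\gamma(h)=-1$ alone (they leave a one-parameter freedom in $\h$), so it must be checked using the explicit coefficients of the formula (e.g. the value $c_{s-1}=s/2$), which your verification indeed has at hand.
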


Recall the bounds $\ch\mathcal A$ and $\ch\mathcal B$ for $\ch Y(\p)$  as well as the improved upper bound $\mathcal B'$  of Section~\ref{standardnotation}. We will show that the lower bound $\ch\mathcal A$ and the improved upper bound $\mathcal B'$ coincide, hence $Y(\p)$ is a polynomial algebra over $k$.

\begin{lemma}\label{calculboundsD}
The lower bound $\ch\mathcal A$ is equal to $$\prod_{\Gamma\in E(\pi')}(1-e^{\delta_{\Gamma}})^{-1}=(1-e^{-\varpi_s})^{-3}(1-e^{-2\varpi_s})^{-(n-2-s/2)}.\eqno (3)$$
\end{lemma}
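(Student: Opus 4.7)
The plan is to mimic the strategy of Lemma~\ref{calculboundsB}: enumerate the $\langle{\bf ij}\rangle$-orbits of $\pi$ explicitly, compute $\delta_{\Gamma}$ for each of them using the definition recalled in Section~\ref{standardnotation}, and multiply the resulting factors together. The Levi component of $\p$ has semisimple part of type ${\rm A}_{s-1}\times {\rm D}_{n-s}$. On the ${\rm A}_{s-1}$ component the involution ${\bf i}$ sends $\alpha_t\mapsto\alpha_{s-t}$ for $1\le t\le s-1$; on the ${\rm D}_{n-s}$ component it is the identity when $n-s$ is even and swaps $\alpha_{n-1}\leftrightarrow\alpha_n$ when $n-s$ is odd; ${\bf j}$ is the identity on $\pi$ when $n$ is even and swaps $\alpha_{n-1}\leftrightarrow\alpha_n$ when $n$ is odd. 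Since $s\le n-2$ one checks directly from the definition that ${\bf i}(\alpha_s)={\bf j}(\alpha_s)=\alpha_s$. Hence, regardless of the parity of $n$, the orbits of ${\bf ij}$ in $\pi$ are $\Gamma_t=\{\alpha_t,\,\alpha_{s-t}\}$ for $1\le t\le s/2-1$, the singletons $\Gamma_{s/2}$, $\Gamma_s$, $\Gamma_{n-1}$, $\Gamma_n$, and $\Gamma_t=\{\alpha_t\}$ for $s+1\le t\le n-2$; their total number is $n-s/2+1$, in agreement with Lemma~\ref{setTD}.

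Next I would record, for each $t\neq s$, the value of $\varpi_t-\varpi'_t$ expressed as a rational multiple of $\varpi_s$ by using the explicit formulas in the $\ep_i$-basis. On the ${\rm A}_{s-1}$ piece one obtains $\varpi_t-\varpi'_t=(t/s)\varpi_s$ for $1\le t\le s-1$ exactly as in the type ${\rm B}$ computation. On the ${\rm D}_{n-s}$ piece, using $\varpi_t=\ep_1+\cdots+\ep_t$ for $s+1\le t\le n-2$ and the half-sum formulas for $\varpi_{n-1}$ and $\varpi_n$ in type ${\rm D}_n$, one gets $\varpi_t-\varpi'_t=\varpi_s$ for $s+1\le t\le n-2$ and $\varpi_{n-1}-\varpi'_{n-1}=\varpi_n-\varpi'_n=(1/2)\varpi_s$.

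Then the formula for $\delta_\Gamma$ is applied orbit by orbit. For an orbit fixed pointwise by both ${\bf i}$ and ${\bf j}$ one has $\delta_\Gamma=-2\sum_{\gamma\in\Gamma}(\varpi_\gamma-\varpi'_\gamma)$, so the pairs $\Gamma_t=\{\alpha_t,\alpha_{s-t}\}$ yield $\delta_{\Gamma_t}=-2\varpi_s$, the singleton $\Gamma_{s/2}$ yields $-\varpi_s$, and each $\Gamma_t$ with $s+1\le t\le n-2$ yields $-2\varpi_s$. For $\Gamma_s=\{\alpha_s\}$, the intersection $\Gamma_s\cap\pi'$ is empty and $\delta_{\Gamma_s}=-2\varpi_s$ directly. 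The only delicate orbits are $\Gamma_{n-1}$ and $\Gamma_n$: when $n$ is even both ${\bf i}$ and ${\bf j}$ fix them and one gets $\delta_{\Gamma_{n-1}}=-2\cdot(1/2)\varpi_s=-\varpi_s$ and analogously for $\Gamma_n$; when $n$ is odd both ${\bf i}$ and ${\bf j}$ swap $\alpha_{n-1}$ and $\alpha_n$ so that $\delta_{\Gamma_{n-1}}=-\varpi_{n-1}-\varpi_n+\varpi'_{n-1}+\varpi'_n=-(1/2)\varpi_s-(1/2)\varpi_s=-\varpi_s$, and similarly for $\Gamma_n$. Thus in both parities $\delta_{\Gamma_{n-1}}=\delta_{\Gamma_n}=-\varpi_s$.

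Assembling, three orbits contribute $-\varpi_s$ (namely $\Gamma_{s/2}$, $\Gamma_{n-1}$, $\Gamma_n$) and the remaining $(s/2-1)+1+(n-s-2)=n-s/2-2$ orbits contribute $-2\varpi_s$, which gives the right hand side of (3). The only real obstacle is keeping the case analysis on the parity of $n$ under control, and verifying that the contributions from $\Gamma_{n-1}$ and $\Gamma_n$ coincide in the two parities despite ${\bf j}$ and ${\bf i}$ behaving differently; the half-weight formulas for $\varpi_{n-1}$ and $\varpi_n$ in type ${\rm D}_n$ are exactly what makes the two answers match.
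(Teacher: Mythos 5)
Your proposal is correct and follows essentially the same route as the paper: the paper's proof simply says the computation of the $\delta_\Gamma$ is as in Lemma~\ref{calculboundsB}, except that $\delta_{\Gamma_{n-1}}=-2(\varpi_{n-1}-\varpi'_{n-1})=-\varpi_s$, which together with $\delta_{\Gamma_{s/2}}=\delta_{\Gamma_n}=-\varpi_s$ and $\delta_\Gamma=-2\varpi_s$ for the remaining $n-2-s/2$ orbits gives (3). Your only addition is to spell out the parity-of-$n$ discussion of ${\bf i}$ and ${\bf j}$ on $\{\alpha_{n-1},\alpha_n\}$ (and to note that the pair orbits are only stable as sets, not pointwise fixed, which does not affect the sums), details the paper leaves implicit; your values of $\varpi_t-\varpi'_t$ and the final count agree with the paper's.
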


\begin{proof}
The computation of the $\delta_\Gamma,\, \Gamma\in E(\pi')$, is exactly as in the proof of Lemma~\ref{calculboundsB}, except for the $\langle {\bf ij}\rangle$-orbit $\Gamma_{n-1}=\{\alpha_{n-1}\}$, for which $\delta_{\Gamma_{n-1}}=-2(\varpi_{n-1}-\varpi'_{n-1})=-\varpi_s$.
\end{proof}

We will now compute the improved upper bound $\mathcal B'$ for $\ch Y(\p)$ and as in type ${\rm B}_n$, we will show that it is equal to the lower bound.

\begin{lemma}\label{improvboundD}
The bound $\mathcal B'$ is given by the right hand side of (3). Hence one has that $\ch\mathcal A=\mathcal B'$ and then $Y(\p)$ is a polynomial algebra over $k$.
\end{lemma}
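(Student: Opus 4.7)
The plan is to follow the same computational strategy as for type B (Lemma~\ref{improvboundB}): for every $\gamma \in T$ (listed in the proof of Lemma~\ref{setTD}), produce explicitly an element $t(\gamma)\in\mathbb{Q} S$ such that $\gamma+t(\gamma)$ vanishes on $\h_\Lambda$ and hence lies in $k\varpi_s$, then compute the resulting scalar $c_\gamma$ with $\gamma+t(\gamma)=c_\gamma\varpi_s$. The assertion is that $c_\gamma=1$ for exactly three elements of $T$ and $c_\gamma=2$ for the remaining $n-2-s/2$ elements, so that $\mathcal B'=\prod_{\gamma\in T}(1-e^{-(\gamma+t(\gamma))})^{-1}$ is the product on the right-hand side of (3), which is precisely $\ch\mathcal A$ by Lemma~\ref{calculboundsD}. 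Polynomiality of $Y(\p)$ then follows from the squeeze $\ch\mathcal A\le\ch Y(\p)\le\mathcal B'$ recalled in Section~\ref{standardnotation}.

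For the bulk of the roots $\gamma\in T$ the formulae from the proof of Lemma~\ref{improvboundB} can be transported almost verbatim. The only structural modification is that in type D the set $S^m$ contains the pair $\ep_s\pm\ep_n$ instead of the single element $\ep_s$ available in the type-B case; using the identities $2\ep_s=(\ep_s-\ep_n)+(\ep_s+\ep_n)$ and $2\ep_n=(\ep_s+\ep_n)-(\ep_s-\ep_n)$, each appearance of $\ep_s$ or $\ep_n$ in the type-B formulae can be rewritten as an element of the present $\mathbb{Q} S$. After this translation one checks that $\ep_{s-1}\pm\ep_s$ still give $\gamma+t(\gamma)=\varpi_s$, while the interior roots $\ep_{2i-1}-\ep_{2i}$ for $1\le i\le s/2-1$, the root $\ep_{s-1}-\ep_{s+1}$, and the nonboundary instances of $\ep_{2j}-\ep_{2j+1}$ and $-\ep_{2k+1}+\ep_{2k+2}$ all produce $\gamma+t(\gamma)=2\varpi_s$, by the same telescoping along the ${\rm A}_{s-1}$ and ${\rm D}_{n-s}$ components of $\pi'$ as in type B.

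The genuinely new computation concerns the boundary root of $T$, namely $\ep_{n-1}-\ep_n$ when $n$ is odd (corresponding to $j=(n-1)/2$) and $-\ep_{n-1}+\ep_n$ when $n$ is even (corresponding to $k=(n-2)/2$). For this root the telescoping chain used in the type-B analog terminates one step earlier: because $\ep_n\in\mathbb{Q} S$ is now available via $(\ep_s+\ep_n)-(\ep_s-\ep_n)$, whereas in type B with $n>s$ it was not, the combination $\gamma+t(\gamma)$ simplifies to $\varpi_s$ rather than $2\varpi_s$. This yields the extra factor $(1-e^{-\varpi_s})^{-1}$ that distinguishes (3) from the analogous type-B expression. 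The main obstacle is precisely this boundary calculation: one must verify that the coefficient drop from $2$ to $1$ occurs for exactly one boundary root (and not zero or two of them), which amounts to tracking parities in the telescoping sum along the ${\rm D}_{n-s}$ component $\pi'_2$, and splitting the case analysis for $n$ even versus $n$ odd. Once this single boundary case has been settled, the desired identity $\mathcal B'=\ch\mathcal A$ assembles from the per-$\gamma$ computations as in Lemma~\ref{improvboundB}.
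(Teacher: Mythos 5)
Your proposal follows essentially the same route as the paper: the sets $T$ in types ${\rm B}_n$ and ${\rm D}_n$ (same even $s\le n-2$) are identical and the sets $S$ differ in effect only at the boundary, so the type-B elements $t(\gamma)$ carry over (using $2\ep_s=(\ep_s-\ep_n)+(\ep_s+\ep_n)\in\mathbb{Q}S$) for all $\gamma\in T$ except the single boundary root $\ep_{n-1}-\ep_n$ ($n$ odd) or $-\ep_{n-1}+\ep_n$ ($n$ even), for which a new $t(\gamma)\in\mathbb{Q}S$ gives $\gamma+t(\gamma)=\varpi_s$ instead of $2\varpi_s$, producing exactly the extra factor $(1-e^{-\varpi_s})^{-1}$ in (3). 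This is precisely the paper's argument, which just records the explicit boundary $t(\gamma)$ that your sketch leaves to be written out.
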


\begin{proof}
With the notations of Section~\ref{standardnotation} we have
$$\mathcal B'= \prod_{\gamma\in T}(1-e^{-(\gamma+t(\gamma))})^{-1},$$
where the set $T$ is given in the proof of Lemma~\ref{setTD}.

Again the computations are very similar to type ${\rm B}$ of Section~\ref{SecB}. If we compare the sets $S$ in Sections~\ref{SecB} and~\ref{SecD}) and the sets $T$ in the proofs of Lemmas~\ref{setTB} and~\ref{setTD} for type ${\rm B}_n$ and ${\rm D}_n$ with the same $s$, $2\le s\le n-2$, the sets $T$ are identical and the sets $S^\pm$ differ only by one element.

More precisely, if $n$ is odd, then $\ep_{n-1}+\ep_n\not\in S$, and so in this case for $\gamma=\ep_{n-1}-\ep_n\in T$, the element $t(\gamma)$ computed in the proof of Lemma~\ref{improvboundB} is no longer in $\mathbb Q S$. On the other hand,
$$\begin{array}{ll}
t(\ep_{n-1}-\ep_n)=\displaystyle(\ep_1+\ep_2)+\ldots+(\ep_{s-3}+\ep_{s-2})+(\ep_{s-1}+\ep_{s+1})\\
\displaystyle -\sum_{j=s/2+1}^{(n-1)/2}(\ep_{2j-1}+\ep_{2j})+\sum_{j=s/2+1}^{(n-3)/2}(\ep_{2j}+\ep_{2j+1})+(\ep_s+\ep_n)\in\mathbb Q S\end{array}$$
and $t(\ep_{n-1}-\ep_n)+(\ep_{n-1}-\ep_n)=\varpi_s$.

Similarly, if $n$ is even, then $-(\ep_{n-1}+\ep_n)\not\in S$ and for $\gamma=-\ep_{n-1}+\ep_n\in T$
one has that
$$\begin{array}{ll}t(-\ep_{n-1}+\ep_n)=\displaystyle(\ep_1+\ep_2)+\ldots+(\ep_{s-3}+\ep_{s-2})+(\ep_{s-1}+\ep_{s+1})\\
\displaystyle -\sum_{j=s/2+1}^{(n-2)/2}(\ep_{2j-1}+\ep_{2j})+\sum_{j=s/2+1}^{(n-2)/2}(\ep_{2j}+\ep_{2j+1})+(\ep_s-\ep_n)\in\mathbb Q S\end{array}$$
and $t(-\ep_{n-1}+\ep_n)+(-\ep_{n-1}+\ep_n)=\varpi_s$.

Note that in type ${\rm B}$ the corresponding weights $\gamma+t(\gamma)$ are equal to $2\varpi_s$ instead of $\varpi_s$, hence the improved upper bound for $Y(\p)$ in Lemma~\ref{improvboundB} will differ from the improved upper bound for ${\rm D}$ only by this factor. We conclude that the improved upper bound for $\ch Y(\p)$ is equal to the lower bound.
\end{proof}

Recall Lemma~\ref{improvboundD} and what we said at the end of Section~\ref{standardnotation}.
We then deduce the following theorem.

\begin{theorem}\label{conD}
Let $\g$ be a simple Lie algebra of type ${\rm D}_n$ ($n\ge 4$) and let $\p=\p^-_{\pi',\,\Lambda}$ be the  truncated maximal parabolic subalgebra of $\g$ associated to $\pi'=\pi\setminus\{\alpha_s\}$, where $s$ is an even integer with $s\le n-2$.

There exists an adapted pair $(h,\,y)$ for $\p$ and an affine slice $y+\g_T$ in $\p^*$ such that restriction of functions gives an isomorphism of algebras between $Y(\p)$ and the ring $R[y+\g_T]$ of polynomial functions on $y+\g_T$.

In particular $Y(\p)$ is a polynomial algebra over $k$
and the field $C(\p^-_{\pi'})$ of invariant fractions is a purely transcendental extension of $k$.
\end{theorem}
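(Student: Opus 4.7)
The plan is to assemble the machinery already developed in this section into a single application of Corollary~\ref{conc} followed by the polynomiality criterion recalled at the end of Section~\ref{standardnotation}. With the sets $S=S^+\sqcup S^-\sqcup S^m$, the Heisenberg sets $\Gamma_\gamma$ ($\gamma\in S$), and $T=(\Delta^+\sqcup\Delta^-_{\pi'})\setminus\Gamma$ fixed as above, put $y=\sum_{\gamma\in S}x_\gamma\in\p^*$ and let $h\in\h_\Lambda$ be uniquely defined by $\gamma(h)=-1$ for all $\gamma\in S$. The target is to show that $(h,y)$ is an adapted pair for $\p$ and that the associated improved upper bound is sharp.

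To invoke Corollary~\ref{conc} I must verify the four hypotheses of Lemma~\ref{non-degeneracy} together with $|T|=\ind\p$. Hypothesis (1), that $S_{\mid\h_\Lambda}$ is a basis of $\h_\Lambda^*$, is the first lemma of the present section. Hypotheses (2), (3) and (4) are exactly what is proved in Lemmas~\ref{Cond D} and~\ref{Condbis D}: every $\alpha\in O^\pm$ satisfies $S_\alpha\cap O^\pm=\{\theta(\alpha)\}$, and whenever $S_\alpha\cap O^m\ne\emptyset$ the root $\alpha$ is either stationary, cyclic, or of the form $\tilde\beta$ (or $\theta(\tilde\beta)$) with $\beta\in O_{cyc}\cap O_3$; the same dichotomy is established directly for $\alpha\in O^m$. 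The equality $|T|=\ind\p=n-s/2+1$ is Lemma~\ref{setTD}. Hence Corollary~\ref{conc} yields both the adapted pair $(h,y)$ and the direct-sum decomposition $\p=(\ad\p^-_{\pi',\Lambda})\,y\oplus\g_T$, and in particular $y$ is regular in $\p^*$.

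With the adapted pair in hand, the next step is to apply~\cite[Lem.~6.11]{J6bis}: the pair $(h,y)$ gives the improved upper bound $\mathcal B'$ for $\ch Y(\p)$, and Lemma~\ref{improvboundD} shows that $\ch\mathcal A=\mathcal B'$. The cited lemma then asserts that the restriction of functions along $y+\g_T\hookrightarrow\p^*$ induces an isomorphism of algebras $Y(\p)\simeq R[y+\g_T]\simeq S(\g_T^*)$. Consequently $Y(\p)$ is polynomial on $\ind\p$ generators and $y+\g_T$ is an algebraic slice in the sense of~\cite[7.6]{J8}, that is a Weierstrass section; by~\cite{FJ4} it is moreover an affine slice for the coadjoint action of the adjoint group of $\p$ on $\p^*$.

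Finally, the claim that $C(\p^-_{\pi'})$ is purely transcendental over $k$ follows from the reduction explained in the introduction: polynomiality of $Sy(\p^-_{\pi'})=Y(\p^-_{\pi',\Lambda})$ ensures that $C(\p^-_{\pi',\Lambda})={\rm Fract}(Y(\p^-_{\pi',\Lambda}))$ is purely transcendental, and since $Sy(\p^-_{\pi'})$ admits algebraically independent generators that are weight vectors, \cite[Thm.~66]{O} transfers this property to $C(\p^-_{\pi'})$. The genuine difficulty is not in the present theorem but in the preceding lemmas: the delicate combinatorics of stationary and cyclic roots (Sections~\ref{sr} and~\ref{Seccyc}) underlying Lemma~\ref{non-degeneracy}, and the explicit determination of the elements $t(\gamma)$ for $\gamma\in T$ that matches $\mathcal B'$ with $\ch\mathcal A$ in Lemma~\ref{improvboundD}. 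Once these are in place, the theorem is a formal assembly.
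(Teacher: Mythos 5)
Your proposal is correct and follows essentially the same route as the paper: the adapted pair is obtained from the section's lemmas (basis of $\h_\Lambda^*$, Lemmas~\ref{Cond D}, \ref{Condbis D}, \ref{setTD}) via Lemma~\ref{non-degeneracy}/Corollary~\ref{conc} (equivalently Lemma~\ref{refineregular}), and the theorem then follows from Lemma~\ref{improvboundD} together with the criterion of \cite[Lem.~6.11]{J6bis} recalled at the end of Section~\ref{standardnotation}, with the transcendence statement deduced as in the introduction via \cite[Thm.~66]{O}.
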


\begin{Remark} \rm
(1) When $s=2$, the above result was known in ~\cite{PPY} and was proven again by a different method in~\cite{J6bis}, where an adapted pair $(h',\, y'=\sum_{\gamma\in S'}x_{\gamma})$ was constructed. Our adapted pair $(h,\,y=\sum_{\gamma\in S}x_{\gamma})$ does not coincide with $(h',\,y')$ but it is equivalent to it. Indeed, setting $r_{i,\, j}=r_{\ep_i-\ep_j}\circ r_{\ep_i+\ep_j}$, one verifies that $w=\prod_{k=1}^{2m-1}r_{2k+1,\,2k+3}\circ r_{\alpha_1}$ $($resp. $w=\prod_{k=1}^{2m-3}r_{2k+1,\,2k+3}\circ r_{\alpha_1}\circ r_{n-1,\,n})$ if $n=4m+u$ with $u\in\{1,\,2,\,3\}$ $($resp. if $n=4m)$ and $m\neq 0$ is such that $w\in W_{\pi'}$ and sends bijectively $S$ to $S'$.\par
(2) The degrees of a set of homogeneous generators of $Y(\p)$ are the eigenvalues of $\ad h$ on $\g_T$ given in Lemma~\ref{eigenvalues D} each augmented by one.
\end{Remark}

\section{Another Lemma of non-degeneracy}

It remains to consider the case when the simple Lie algebra $\g$ is of type ${\rm D}_n$ and when the truncated maximal parabolic subalgebra $\p$ corresponds to $\pi'=\pi\setminus\{\alpha_n\}$ with $n$ even (such a case  will be called the extremal case). In this extremal case, the set $S$ and the Heisenberg sets $\Gamma_{\gamma}$, $\gamma\in S$, that we obtain (see next Section) will produce more roots in $O_3$ and  Lemma~\ref{non-degeneracy} will not apply.

Actually we will state a new Lemma of non-degeneracy, where we need  to extend the notions of stationary roots and of cyclic roots that we have defined in Definitions~\ref{defsr} and~\ref{cyc}.

Recall the hypotheses and notations of Section~\ref{sr}, especially a sequence $(\alpha^i)_{i\in\mathbb N}$ of roots in $O$ constructed from the root $\alpha\in O$ and the set
$A_{\alpha}=\{\alpha^i,\,\theta(\alpha^i)\mid i\in\mathbb N\}$.

We need the following condition:

{\bf Condition $(*)$}: If $\alpha\in O_3$, then there exists $\alpha'\in S_{\alpha}\setminus\{\theta(\alpha)\}\cap O_2$ such that $\theta(\alpha')\in O_1$. 

Assume that condition $(*)$ is satisfied for $\alpha\in O_3$ and choose a root $\alpha'$ as above. Then we define $\alpha^{(1)}$ as the unique root in $O$, distinct from $\alpha'$ and from $\theta(\alpha)$, such that $\alpha^{(1)}+\alpha\in S$. If $\theta(\alpha^{(1)})\in O_3$ satisfies condition $(*)$,  we define $\alpha^{(2)}$ similarly. If at each step $i$, condition $(*)$ is satisfied for the root $\theta(\alpha^{(i)})$ if it belongs to $O_3$ or if $\theta(\alpha^{(i)})\in O_1\sqcup O_2$, then  the sequence $(\alpha^{(i)})_{i\in\mathbb N}$ of roots in $O$ constructed from $\alpha^{(0)}=\theta(\alpha)$ is uniquely defined.

\begin{Remark}\label{est}\rm
Let $\alpha\in O$ such that $A_{\alpha}\cup A_{\theta(\alpha)}\subset O_1\sqcup O_2\sqcup O_3$, with condition $(*)$ satisfied for all roots in $(A_{\alpha}\cup A_{\theta(\alpha)})\cap O_3$. In particular this implies that the sequences $(\alpha^i)_{i\in\mathbb N}$ and $(\alpha^{(i)})_{i\in\mathbb N}$ are uniquely defined and moreover Remark~\ref{rem1}(1) still applies. Hence if there exists $i_0\in\mathbb N$ such that the sequence of roots in $O$ constructed from $\alpha^{i_0}$, resp. from $\alpha^{(i_0)}$, is stationary, then the sequence of roots in $O$ constructed from $\alpha$, resp. from $\theta(\alpha)$, is also stationary.
\end{Remark}

Here is the extension of Definition~\ref{defsr} of a stationary root:
\begin{Definition}\rm
Let $\alpha\in O$. We will say that $\alpha$ is an {\it extended stationary root} if $A_{\alpha}\cup A_{\theta(\alpha)}\subset O_1\sqcup O_2\sqcup O_3$, with condition $(*)$ satisfied for all roots 
in $(A_{\alpha}\cup A_{\theta(\alpha)})\cap O_3$ and if the sequences $(\alpha^{(i)})_{i\in\mathbb N}$ and $(\alpha^{i})_{i\in\mathbb N}$ are stationary. The set of extended stationary roots will be denoted by $O_{st}^e$.
\end{Definition}

\begin{Remark}\rm We have that $O_{st}\subset O_{st}^e$.\end{Remark}

Recall conditions (i)-(vi) of Definition~\ref{cyc} and its notations.
We replace condition (vi) by condition (vie) below:

{\bf Condition (vie)}: Let $\alpha\in O$ be a root verifying conditions (i)-(v) of Definition~\ref{cyc}. If $\delta\in C_{\alpha}\cap O_3$, then there exists $\tilde\delta\in S_{\delta}$ such that $A_{\tilde\delta}\subset O_1\sqcup O_2\sqcup O_3$ with condition $(*)$ satisfied for all roots in 
$A_{\tilde\delta}\cap O_3$, and the sequence $(\tilde\delta^i)_{i\in\mathbb N}$ is stationary.

\begin{Remark}\rm
1) With the above notations, the root $\tilde\delta$ is unique since $\tilde\delta\in S_{\delta}\setminus S_{\delta}\cap C_{\delta}$.

2) Condition (vi) of Definition~\ref{cyc} implies condition (vie), since $\theta(\tilde\delta)\in O_1$ implies that $\tilde\delta^1=\tilde\delta$.
\end{Remark}

Here is the extension of Definition~\ref{cyc} of a cyclic root:

\begin{Definition}\label{cyce}\rm
Let $\alpha\in O$. We say that $\alpha$ is an {\it extended cyclic root} if there exist $\beta,\,\gamma\in O$ satisfying conditions (i)-(v) of Definition~\ref{cyc} and condition (vie) above.
The set of extended cyclic roots will be denoted by $O_{cyc}^e$.
\end{Definition}

\begin{Remark}\rm We have that $O_{cyc}\subset O_{cyc}^e$.\end{Remark}

Similarly to Lemma~\ref{stat}, we obtain the following Lemma:

\begin{lemma}\label{state}
Let $\alpha\in O_{st}^e$. Let $\vartheta:O\to O$ be a permutation such that, for all $\gamma\in O$, $\gamma+\vartheta(\gamma)\in S$. Then the restriction of $\vartheta$ to $A_{\alpha}\cup A_{\theta(\alpha)}$ coincides with the involution $\theta$. Moreover the map $\vartheta$ exchanges $\beta'$ and $\theta(\beta')$, where $\beta'$ is the chosen root in $S_{\beta}\setminus\{\theta(\beta)\}\cap O_2$ such that $\theta(\beta')\in O_1$ for any $\beta\in (A_{\alpha}\cup A_{\theta(\alpha)})\cap O_3$.
\end{lemma}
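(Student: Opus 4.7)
The plan is to imitate the proof of Lemma~\ref{stat}, incorporating condition $(*)$ to handle the roots lying in $O_3$. Let $n_0$ and $n_1$ denote the ranks of the stationary sequences $(\alpha^i)_{i\in\mathbb N}$ and $(\alpha^{(i)})_{i\in\mathbb N}$. For every $\beta\in(A_\alpha\cup A_{\theta(\alpha)})\cap O_3$, write $\beta'$ for the unique root in $S_\beta\setminus\{\theta(\beta)\}$ supplied by condition~$(*)$, so that $\beta'\in O_2$ and $\theta(\beta')\in O_1$. Since $S_{\theta(\alpha^{n_0})}=\{\alpha^{n_0}\}$, $S_{\theta(\alpha^{(n_1)})}=\{\alpha^{(n_1)}\}$ and $S_{\theta(\beta')}=\{\beta'\}$, the map $\vartheta$ is forced to satisfy $\vartheta(\theta(\alpha^{n_0}))=\alpha^{n_0}$, $\vartheta(\theta(\alpha^{(n_1)}))=\alpha^{(n_1)}$ and $\vartheta(\theta(\beta'))=\beta'$ for each such $\beta$. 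These pinnings on $O_1$-type roots furnish the base of the induction.

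Starting from these forced values, I would run a decreasing induction on $i$ to prove $\vartheta(\theta(\alpha^i))=\alpha^i$ for $0\le i\le n_0$. The key point is the enumeration of $S_{\theta(\alpha^i)}$: it always contains $\alpha^i$ and $\alpha^{i+1}$, and when $\theta(\alpha^i)\in O_3$ it contains the additional root $(\theta(\alpha^i))'$ provided by condition~$(*)$. The inductive hypothesis $\vartheta(\theta(\alpha^{i+1}))=\alpha^{i+1}$ and injectivity of $\vartheta$ rule out $\alpha^{i+1}$ (using $\theta(\alpha^{i+1})\neq\theta(\alpha^i)$, which holds since $i<n_0$), while the forced pinning $\vartheta(\theta((\theta(\alpha^i))'))=(\theta(\alpha^i))'$ rules out the extra candidate. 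The symmetric argument on the dual sequence yields $\vartheta(\theta(\alpha^{(i)}))=\alpha^{(i)}$ for $0\le i\le n_1$.

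An increasing induction on $i$ then establishes $\vartheta(\alpha^i)=\theta(\alpha^i)$ for $0\le i\le n_0$, and analogously for $\alpha^{(i)}$. For $i\ge 1$, $S_{\alpha^i}$ consists of $\theta(\alpha^i)$, $\theta(\alpha^{i-1})$, together with $(\alpha^i)'$ when $\alpha^i\in O_3$; the inductive hypothesis $\vartheta(\alpha^{i-1})=\theta(\alpha^{i-1})$ combined with injectivity excludes $\theta(\alpha^{i-1})$, while the base pinning on $\theta((\alpha^i)')$ excludes $(\alpha^i)'$. For the initial case $i=0$, the decreasing induction applied to the dual sequence gives $\vartheta(\theta(\alpha^{(1)}))=\alpha^{(1)}$, which by injectivity blocks $\vartheta(\alpha)=\alpha^{(1)}$, leaving $\vartheta(\alpha)=\theta(\alpha)$ (the $O_3$ candidate again being excluded by the base pinning). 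For the \emph{moreover} statement, fix $\beta\in(A_\alpha\cup A_{\theta(\alpha)})\cap O_3$; since $\beta'\in O_2$ and $\beta\in S_{\beta'}$, we have $S_{\beta'}=\{\theta(\beta'),\beta\}$, and the already-proved equality $\vartheta(\theta(\beta))=\beta$ precludes $\vartheta(\beta')=\beta$, forcing $\vartheta(\beta')=\theta(\beta')$.

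The main obstacle I expect is the bookkeeping: unlike in Lemma~\ref{stat}, each $S_{\theta(\alpha^i)}$ (resp. $S_{\alpha^i}$) may now acquire a third candidate arising from $O_3$, and one must verify systematically that this extra root is removed from contention by the $O_1$-pinning of Step~1 rather than by the inductive hypothesis itself, while Remark~\ref{est} and the well-definedness of the sequences under condition~$(*)$ ensure that no ambiguity is introduced into $\alpha^{i+1}$ or $\alpha^{(i+1)}$.
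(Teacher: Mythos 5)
Your plan is the paper's own: the published proof of this lemma is a one-line reduction to the proof of Lemma~\ref{stat} together with the observation that $\vartheta(\theta(\beta'))=\beta'$ is forced because $\theta(\beta')\in O_1$, and your base pinnings, your decreasing induction, your base case $i=0$, and your deduction of the \emph{moreover} part are correct implementations of exactly that plan (for the \emph{moreover} part you could even argue directly: since $\theta(\beta')\in O_1$, the only possible $\vartheta$-preimage of $\theta(\beta')$ is $\beta'$, so surjectivity alone forces $\vartheta$ to swap $\beta'$ and $\theta(\beta')$).

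The one step that is not justified as written is the enumeration of $S_{\alpha^i}$ in the increasing induction, namely the claim that for $\alpha^i\in O_3$ its three elements are $\theta(\alpha^i)$, $\theta(\alpha^{i-1})$ and $(\alpha^i)'$. This needs $(\alpha^i)'\neq\theta(\alpha^{i-1})$, equivalently $\alpha^{i-1}=\theta((\alpha^i)')\notin O_1$. For $2\le i\le n_0$ this is automatic, because $\theta(\alpha^{i-1})$ and $\theta(\alpha^{i-2})$ are two distinct elements of $S_{\alpha^{i-1}}$; but at $i=1$ the coincidence $(\alpha^1)'=\theta(\alpha)$ is possible exactly when $\alpha\in O_1$ (and symmetrically on the dual sequence when $\theta(\alpha)\in O_1$), and in that case $S_{\alpha^1}$ contains a third root $\delta$ that is removed neither by your inductive hypothesis nor by any $O_1$-pinning, so your exclusion list is incomplete. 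The gap can be closed, but it requires a further argument in the spirit of the proof of Lemma~\ref{permcyc}: if $\vartheta(\alpha^1)=\delta$, then chasing preimages of $\theta(\alpha^1),\theta(\alpha^2),\dots$ (at each stage $\alpha^j$ is unavailable and $(\theta(\alpha^j))'$ is unavailable, since it must map to its $O_1$-partner) forces $\vartheta(\alpha^{j+1})=\theta(\alpha^j)$ for $1\le j\le n_0-1$, and then $\theta(\alpha^{n_0})\in O_1$ is left with no preimage, contradicting surjectivity. Either this preimage-chasing step, or a separate treatment of the degenerate first step of each increasing induction, must be added for the argument to be complete.
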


\begin{proof}
It is similar to the proof of Lemma~\ref{stat}, noting that necessarily, with the above notations, the map $\vartheta$ sends $\theta(\beta')$ to $\beta'$ since $\theta(\beta')\in O_1$.
\end{proof}

Similarly to Lemma~\ref{permcyc}, we obtain the following Lemma:

\begin{lemma}\label{permcyce}
Let $\vartheta:O\to O$ be a permutation such that for all $\gamma\in O$, $\gamma+\vartheta(\gamma)\in S$. Then $\vartheta$ exchanges $\tilde\delta^i$ and $\theta(\tilde\delta^i)$, for all $i\in\mathbb N$, where $\tilde\delta$ is the root in $S_{\delta}$ given by condition (vie) of Definition~\ref{cyce}, for any $\delta\in O_{cyc}^e\cap O_3$.
\end{lemma}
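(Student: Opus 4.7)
The plan is to mirror the proof of Lemma~\ref{permcyc}, replacing each single-step preimage computation by a chain argument based on the stationary sequence $(\tilde\delta^i)_{i\in\mathbb N}$ that condition~(vie) provides, in the spirit of the upgrade from Lemma~\ref{stat} to Lemma~\ref{state}. Fix $\delta\in O_{cyc}^e\cap O_3$, let $\tilde\delta\in S_\delta$ be as in~(vie), and let $n_0$ be the rank at which $(\tilde\delta^i)$ is stationary, so $\theta(\tilde\delta^{n_0})\in O_1$. First I would prove, by decreasing induction on $i$ from $n_0$ down to $0$, that $\vartheta(\theta(\tilde\delta^i))=\tilde\delta^i$. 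The base case is forced since $S_{\theta(\tilde\delta^{n_0})}=\{\tilde\delta^{n_0}\}$. For the inductive step, $\vartheta(\theta(\tilde\delta^i))$ lies in $S_{\theta(\tilde\delta^i)}$, which equals $\{\tilde\delta^i,\tilde\delta^{i+1}\}$ when $\theta(\tilde\delta^i)\in O_2$ and $\{\tilde\delta^i,\tilde\delta^{i+1},(\theta(\tilde\delta^i))'\}$ when $\theta(\tilde\delta^i)\in O_3$, where the third element is supplied by condition~$(*)$ and satisfies $\theta((\theta(\tilde\delta^i))')\in O_1$. The inductive hypothesis gives $\vartheta^{-1}(\tilde\delta^{i+1})=\theta(\tilde\delta^{i+1})\ne\theta(\tilde\delta^i)$, and $\theta((\theta(\tilde\delta^i))')\in O_1$ gives $\vartheta^{-1}((\theta(\tilde\delta^i))')=\theta((\theta(\tilde\delta^i))')\ne\theta(\tilde\delta^i)$; the only remaining option is $\vartheta(\theta(\tilde\delta^i))=\tilde\delta^i$.

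This decreasing-induction argument applies unchanged to every $\delta'\in C_\alpha\cap O_3$ and its associated $\widetilde{\delta'}$, since condition~(vie) supplies the required stationary chain for each such $\delta'$ simultaneously. Having thereby locked $\vartheta(\theta(\widetilde{\delta'}))=\widetilde{\delta'}$ for every such $\delta'$, I can replay the base-case $i=0$ of the forward direction exactly as in the proof of Lemma~\ref{permcyc}: writing $\{\delta_1,\delta_2,\delta_3\}=\{\alpha,\beta,\gamma\}$ with $\delta=\delta_1$, the assumption $\vartheta(\tilde\delta)=\delta$ forces successively, via the cyclic relations (i)--(iii) of Definition~\ref{cyc}, $\vartheta(\theta(\delta_1))=\delta_3$, then $\vartheta(\theta(\delta_3))=\delta_2$, then $\vartheta(\theta(\delta_2))=\delta_1$, contradicting the fact that $\delta_1=\delta$ already has preimage $\tilde\delta$. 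The only extra point in the extended setting is that when $\theta(\delta_k)\in O_3$ the set $S_{\theta(\delta_k)}$ acquires a third element $\widetilde{\theta(\delta_k)}$, but its preimage has just been fixed, so the two remaining candidates for $\vartheta(\theta(\delta_k))$ are the same as in the $O_2$ case.

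Finally, an increasing induction on $i$ from $0$ to $n_0$ yields $\vartheta(\tilde\delta^i)=\theta(\tilde\delta^i)$: the image lies in $S_{\tilde\delta^i}$, which contains $\theta(\tilde\delta^i)$, $\theta(\tilde\delta^{i-1})$, and, when $\tilde\delta^i\in O_3$, a third element $(\tilde\delta^i)'$ whose preimage $\theta((\tilde\delta^i)')$ has been fixed earlier. The induction hypothesis $\vartheta(\tilde\delta^{i-1})=\theta(\tilde\delta^{i-1})$ prevents $\vartheta(\tilde\delta^i)=\theta(\tilde\delta^{i-1})$ (else two preimages for a single element), forcing $\vartheta(\tilde\delta^i)=\theta(\tilde\delta^i)$; for $i\ge n_0$ the sequence is constant so nothing new is required. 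The main obstacle is purely notational: one must simultaneously track the three cyclic chains $(\widetilde{\delta_k}^j)$, $k=1,2,3$, and ensure condition~(vie) is invoked for each branch whenever closing the cyclic loop in the middle step; beyond that bookkeeping, the argument is a routine extension of the proofs of Lemmas~\ref{stat},~\ref{state}, and~\ref{permcyc}.
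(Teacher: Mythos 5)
Your proof is correct and is exactly the argument the paper intends by its one-line remark that the lemma follows ``similarly to Lemma~\ref{permcyc}'': a decreasing and then increasing induction along the stationary chain, as in Lemmas~\ref{stat} and~\ref{state}, combined with the cyclic contradiction of Lemma~\ref{permcyc}, the extra $O_3$-candidates being eliminated because their preimages under $\vartheta$ are already pinned down. The only point left implicit --- that for $i\ge 1$ the third element of $S_{\tilde\delta^i}$ is necessarily the root supplied by condition $(*)$ and not $\theta(\tilde\delta^{i-1})$ --- is automatic, since otherwise $\tilde\delta^{i-1}=\theta\bigl((\tilde\delta^i)'\bigr)$ would lie in $O_1$, contradicting $\lvert S_{\tilde\delta^{i-1}}\rvert\ge 2$.
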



Similarly to Lemma~\ref{non-degeneracy}, we get the following new Lemma of non-degeneracy:

\begin{lemma}\label{enewnon-degeneracy}

Assume  that:
\begin{enumerate}
\item $S_{\mid\h_{\Lambda}}$ is a basis for $\h_{\Lambda}^*$.
\item If $\alpha\in \Gamma_{\gamma}^0$, with $\gamma\in\ S^+$, then $S_{\alpha}\cap O^+=\{\theta(\alpha)\}$.

\item  If $\alpha\in \Gamma_{\gamma}^0$, with $\gamma\in\ S^-$, then $S_{\alpha}\cap O^-=\{\theta(\alpha)\}$.

 \item If $\alpha\in O$, with $S_{\alpha}\cap O^m\neq\emptyset$, then $\alpha\in O_{st}^e$ or $\alpha\in O_{cyc}^e$ or there exists $\beta\in O_{cyc}^e\cap O_3$ and $i\in\mathbb N$ such that $\alpha=\tilde\beta^i$ or $\theta(\alpha)=\tilde\beta^i$, where $\tilde\beta$ is the unique root in $S_{\beta}$ given by condition (vie) of Definition~\ref{cyce}.
 \end{enumerate}
Then the restriction of the bilinear form $\Phi_y$ to $\o\times\o$ is non-degenerate.
\end{lemma}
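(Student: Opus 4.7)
The plan is to adapt the argument of Lemma~\ref{non-degeneracy} verbatim, replacing each appeal to Lemma~\ref{stat} by Lemma~\ref{state} and each appeal to Lemma~\ref{permcyc} by Lemma~\ref{permcyce}. First I would introduce the one-parameter deformation $z(t)=\sum_{\gamma\in S}t^{|\rho(\gamma)|}a_{\gamma}x_{\gamma}$ and set $d(t)=\det({\Phi_{z(t)}}_{\mid\o\times\o})$. Hypothesis~(1) provides the linear independence of $S_{\mid\h_{\Lambda}}$, so $z(ct_0)$ and $z(t_0)$ lie in the same $H_{\Lambda}$-coadjoint orbit for every $c\in k\setminus\{0\}$ and $t_0\in k$; since $\o\times\o$ is $H_{\Lambda}$-stable, the same reasoning as in Lemma~\ref{non-degeneracy} forces $d(t)$ to be either identically zero or a nonzero scalar multiple of a single power of $t$. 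Expanding $d(t)$ by Leibniz over permutations $\vartheta\colon O\to O$ satisfying $\gamma+\vartheta(\gamma)\in S$ for every $\gamma\in O$, the task reduces to exhibiting one nonvanishing contribution and verifying that competing terms share the same $t$-degree.

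For each $\alpha\in O$ with $S_{\alpha}\cap O^m\ne\emptyset$ I would then identify which admissible permutations can involve $\alpha$ and what $t$-degree they produce. If $\alpha\in O_{st}^e$, Lemma~\ref{state} forces $\vartheta_{\mid A_{\alpha}\cup A_{\theta(\alpha)}}=\theta$ together with the swap of each pair $\{\beta',\theta(\beta')\}$ associated to a root $\beta\in(A_{\alpha}\cup A_{\theta(\alpha)})\cap O_3$; consequently the only factor in $d(t)$ involving the roots of $A_{\alpha}\cup A_{\theta(\alpha)}$ and the forced $\beta'$-pairs is the expected product of monomials $t^{2|\rho(\gamma+\theta(\gamma))|}$. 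If $\alpha=\tilde\beta^i$ or $\theta(\alpha)=\tilde\beta^i$ for some $\beta\in O_{cyc}^e\cap O_3$ and $i\in\mathbb N$, Lemma~\ref{permcyce} forces $\vartheta$ to swap $\tilde\beta^i$ with $\theta(\tilde\beta^i)$, so the factor involving $\alpha$ is again $t^{2|\rho(\alpha+\theta(\alpha))|}$.

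The main obstacle is the case $\alpha\in O_{cyc}^e$. When $C_{\alpha}\subset O_2$, the $6\times 6$ block computation of Lemma~\ref{non-degeneracy} applies unchanged and yields a determinant proportional to $t^{2(|\rho(s_1)|+|\rho(s_2)|+|\rho(s_3)|)}$ with $s_1=\alpha+\theta(\alpha)$, $s_2=\beta+\theta(\beta)$, $s_3=\gamma+\theta(\gamma)$. When some $\delta\in C_{\alpha}\cap O_3$ occurs, the $6\times 6$ cyclic block must be augmented by rows and columns indexed by $\{\tilde\delta^i,\theta(\tilde\delta^i)\mid i\in\mathbb N\}$ supplied by condition~(vie); however Lemma~\ref{permcyce} pins down $\vartheta(\tilde\delta^i)=\theta(\tilde\delta^i)$ for every $i$, so no admissible permutation can mix these chain indices with $C_{\alpha}$. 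For the Leibniz sum, the augmented matrix therefore splits into the $6\times 6$ cyclic block on $C_{\alpha}$ and independent $2\times 2$ anti-diagonal blocks on each pair $\{\tilde\delta^i,\theta(\tilde\delta^i)\}$, whose combined contribution is proportional to the expected product $\prod_{\gamma}t^{2|\rho(\gamma+\theta(\gamma))|}$.

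Finally I would rule out any cross-pairing $t^{|\rho(\alpha+\beta)|}$ for $\alpha\in O^{\pm}$ and $\beta\in O^{\mp}\cap S_{\alpha}$ by reproducing the strict-inequality argument of Lemma~\ref{non-degeneracy}: the previous cases ensure that such a cross-pairing can only occur when $S_{\alpha}\cap O^m=\emptyset$ and $S_{\theta(\alpha)}\cap O^m=\emptyset$, in which case conditions~(2) and~(3) yield $|\rho(\alpha+\beta)|<|\rho(\alpha)|+|\rho(\beta)|$ whereas the diagonal pairings at $\alpha$ and $\beta$ contribute with exponent $|\rho(\alpha)|+|\rho(\theta(\alpha))|+|\rho(\beta)|+|\rho(\theta(\beta))|$; since a strictly smaller exponent cannot coexist with the diagonal exponent inside a single-monomial polynomial, no cross-pairing survives. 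Assembling the cases, $d(t)=c\prod_{\alpha\in\widetilde{O}}t^{2|\rho(\alpha+\theta(\alpha))|}$ for some $c\in k\setminus\{0\}$, whence $\Phi_y=\Phi_{z(1)}$ is non-degenerate on $\o\times\o$.
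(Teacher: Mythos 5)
Your proposal is correct and follows exactly the route the paper takes: its proof of this lemma is precisely the argument of Lemma~\ref{non-degeneracy} repeated with Lemma~\ref{stat} replaced by Lemma~\ref{state} and Lemma~\ref{permcyc} replaced by Lemma~\ref{permcyce}, which is what you have spelled out. Your additional details (the single-power-of-$t$ argument, the $6\times 6$ cyclic block with the $\{\tilde\delta^i,\theta(\tilde\delta^i)\}$ pairs split off, and the strict-inequality exclusion of cross-pairings) are all consistent with the paper's intended proof.
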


\begin{proof} It follows like Lemma~\ref{non-degeneracy}, using Lemmas~\ref{state} and~\ref{permcyce}.

\end{proof}

\section{Type D, the extremal case}\label{SecDe}

In this section, we assume that the simple Lie algebra $\g$ is of type ${\rm D}_n$ with $n\ge 6$ and $n$ even and we consider $\p=\p^-_{\pi',\,\Lambda}$ the  truncated maximal parabolic
subalgebra of $\g$ associated to $\pi'=\pi\setminus\{\alpha_n\}$. Then the lower and the upper bounds of Section~\ref{standardnotation} for $\ch Y(\p)$ do not coincide. We will construct an adapted pair for $\p$ and then prove that $Y(\p)$ is a polynomial algebra over $k$. Since the case when $\pi'=\pi\setminus\{\alpha_{n-1}\}$ is symmetric, this will also prove that $Y(\p)$
is polynomial when $\pi'=\pi\setminus\{\alpha_{n-1}\}$.

Observe that the polynomiality of $Y(\p)$ was already known by \cite[Thm. 2.3]{PPY3}, since $\p$ is the semi-direct product of its Levi factor $\g'\simeq\s\l_n$ and its nilradical $\m$, which is in this case an abelian ideal of $\g'$, isomorphic to $\Lambda^{2} k^n$ as a $\s\l_n$-module.
However the degrees of a set of homogeneous generators were not known.
In our present work we will also compute their degrees (see Lemma~\ref{eigenvaluesDe} and Thm~\ref{conDe}).


We set $S=\{\ep_{2i-1}+\ep_{2i},\,\ep_{n-3}+\ep_{n-1},\,\ep_n-\ep_{n-3},\,\ep_{n-2}-\ep_{n-4},\,\ep_{n-4}-\ep_{n-5},\,\ep_{n-3}-\ep_{n-6},\,\ep_{n-2j}-\ep_{n-2j-2}\mid 1\le i\le n/2-2,\,3\le j\le n/2-2\}$. One checks that $S\subset\Delta^+\sqcup\Delta^-_{\pi'}$ and that $\lvert S\rvert =n-1=\dim\h_{\Lambda}$.

We first prove below that condition (1) of Lemma~\ref{enewnon-degeneracy} holds.

\begin{lemma}
$S_{\mid\h_{\Lambda}}$ is a basis for $\h_{\Lambda}^*$.
\end{lemma}

\begin{proof}
Set $S=\{s_i\}_{1\le i\le n-1}$ with $s_i=\ep_{2i-1}+\ep_{2i}$ for all $i\in\mathbb N$, $1\le i\le n/2-2$, $s_{n/2-1}=\ep_{n-3}+\ep_{n-1}$, $s_{n/2}=\ep_{n-4}-\ep_{n-5}$, $s_{n/2+1}=\ep_{n-2}-\ep_{n-4}$,
$s_{n/2+2}=\ep_n-\ep_{n-3}$, $s_{n/2+3}=\ep_{n-3}-\ep_{n-6}$, $s_{n/2+k}=\ep_{n-2k+2}-\ep_{n-2k}$ for all $k\in\mathbb N$, with $4\le k\le n/2-1$.
Then set $s'_i=s_i$ for all $i\in\mathbb N$ with $1\le i\le  n-1$, $i\neq n/2-1$, $s'_{n/2-1}=\ep_{n-1}+\ep_n=s_{n/2-1}+s_{n/2+2}$ and $S'=\{s'_i\}_{1\le i\le n-1}$.

If we choose $\{h_i\}_{1\le i\le n-1}=\{\alpha_i^\vee\}_{1\le i\le n-1}$ as a basis of $\h_{\Lambda}$, it is sufficient to show that $\det(s'_i(h_j))_{1\le i,\,j\le n-1}\neq 0$.

By ordering $S'$ as above and the basis of $\h_{\Lambda}$ as
$\{\alpha_{2i}^\vee,\,\alpha_{n-5}^\vee,\,\alpha_{n-3}^\vee,\,\alpha_{n-1}^\vee,\,\alpha_{n-2j-1}^\vee\mid 1\le i \le n/2-1,\,3\le j\le n/2-1\}$,
one checks that the matrix $(s'_i(h_j))_{1\le i,,j\le n-1}$ is a lower triangular matrix with $1$ on the first $n/2-2$ diagonal elements, then $-1$, $-2$,
$-1$, $-1$ on the next diagonal elements and then $1$ on the $n/2-3$ last diagonal elements. Hence $\det(s'_i(h_j))_{1\le i,,j\le n-1}=2$ and the lemma.
\end{proof}

Now we define the Heisenberg sets $\Gamma_{\gamma}$ of centre $\gamma$, for all $\gamma\in S$.

\begin{itemize}
\item For all $k\in\mathbb N$, $2\le k\le n/2-3$,
 set $\Gamma_{\ep_{2k}-\ep_{2k-2}}=\{\ep_{2k}-\ep_{2k-2},\,\ep_{2k}-\ep_{i},\,\ep_i-\ep_{2k-2}\mid 1\le i\le 2k-3\}$.

\item Set $\Gamma_{\ep_{n-3}-\ep_{n-6}}=\{\ep_{n-3}-\ep_{n-6},\,\ep_{n-3}-\ep_i,\,\ep_i-\ep_{n-6}\mid 1\le i\le n-7\}$.
\item Set $\Gamma_{\ep_{n-4}-\ep_{n-5}}=\{\ep_{n-4}-\ep_{n-5},\,\ep_{n-3}-\ep_{n-5},\,\ep_{n-4}-\ep_{n-3},\,\ep_{n-4}-\ep_{2i},\,\ep_{2i}-\ep_{n-5}\mid 1\le i\le n/2-3\}$.
\item Set $\Gamma_{\ep_{n-2}-\ep_{n-4}}=\{\ep_{n-2}-\ep_{n-4},\,\ep_{n-2}-\ep_{n-1},\,\ep_{n-1}-\ep_{n-4},\,\ep_{n-2}-\ep_n,\,\ep_n-\ep_{n-4},\,\ep_{n-2}-\ep_i,\,\ep_i-\ep_{n-4}\mid 1\le i\le n-5\}$.
\item Set $\Gamma_{\ep_n-\ep_{n-3}}=\{\ep_n-\ep_{n-3},\,\ep_n-\ep_{n-2},\,\ep_{n-2}-\ep_{n-3},\,\ep_n-\ep_{n-1},\,\ep_{n-1}-\ep_{n-3},\,\ep_n-\ep_i,\,\ep_i-\ep_{n-3}\mid 1\le i\le n-6\}$.
\item Set $\Gamma_{\ep_{n-3}+\ep_{n-1}}=\{\ep_{n-3}+\ep_{n-1},\,\ep_{n-3}+\ep_n,\,\ep_{n-1}-\ep_n,\,\ep_{n-3}-\ep_n,\,\ep_n+\ep_{n-1},\,\ep_{n-3}-\ep_{n-2},\,\ep_{n-2}+\ep_{n-1},\,\ep_{n-3}+\ep_{n-2},\,-\ep_{n-2}+\ep_{n-1},\,\ep_{n-1}-\ep_i,\,\ep_i+\ep_{n-3}\mid 1\le i\le n-5\}$.
\end{itemize}

It is easy to check that the $n/2+1$ sets defined above are Heisenberg sets, which we denote by $\Gamma_{\gamma_j}$, $1\le j\le n/2+1$, whose centre will be denoted
by $\gamma_j\in S$.

Let $i\in\mathbb N$, with $1\le i\le n/2-2$, and recall that $\beta_i=\ep_{2i-1}+\ep_{2i}$ is an element of the Kostant cascade of $\g$ (see Section~\ref{SecD}) and that we denote by $H_{\beta_i}$ the maximal Heisenberg set in $\Delta^+$ of centre $\beta_i$ (see Example~\ref{MaxHeisenbergset}).

We define below every Heisenberg set $\Gamma_{\beta_i}$ of centre $\beta_i$, $1\le i\le n/2-2$, by decreasing induction on $i$.

\begin{itemize}
\item First we set $\Gamma_{\beta_{n/2-2}}=(H_{\beta_{n/2-2}}\setminus\bigsqcup_{1\le j\le n/2+1}\Gamma_{\gamma_j}\cap H_{\beta_{n/2-2}})\sqcup\{\ep_i+\ep_{n-4},\,\ep_{n-5}-\ep_i\mid 1\le i\le n-6\}\sqcup\{\ep_{n-4}-\ep_{2i-1},\,\ep_{2i-1}+\ep_{n-5}\mid 1\le i\le n/2-3\}$.
Set $\gamma_j=\beta_{n-j}$ for all $j\in\mathbb N$, with $n/2+2\le j\le n-1$
and suppose, for $2\le k\le n/2-2$, that we have defined the Heisenberg set $\Gamma_{\gamma_j}$ of centre $\gamma_j\in S$, for all $j\in\mathbb N$, with $1\le j\le n/2+k$.

\item Then we set $\Gamma_{\gamma_{n/2+k+1}}=\Gamma_{\beta_{n/2-k-1}}=(H_{\beta_{n/2-k-1}}\setminus\bigsqcup_{1\le j\le n/2+ k}\Gamma_{\gamma_j}\cap H_{\beta_{n/2-k-1}})\sqcup\{\ep_i+\ep_{n-2k-2},\,\ep_{n-2k-3}-\ep_i\mid 1\le i\le n-2k-4\}$.
\end{itemize}

One checks that, for every $\gamma_j\in S$,  $1\le j\le n-1$, the set $\Gamma_{\gamma_j}$ is a Heisenberg set of centre $\gamma_j$. Moreover by construction all these Heisenberg sets are disjoint and $\Gamma=\bigsqcup_{\gamma\in S}\Gamma_{\gamma}\subset\Delta^+\sqcup\Delta^-_{\pi'}$.


Recall the definition  of $\Gamma^\pm$ and of $\Gamma^m$ (Section~\ref{sr}) and
observe that, for $n\ge 8$, $\Gamma_{\beta_1}=\Gamma^+$ (that is, $\{\beta_1=\ep_1+\ep_2\}=S^+$) and $\Gamma^m=\bigsqcup_{\gamma\in S\setminus\{\beta_1\}}\Gamma_{\gamma}$ (that is, $S\setminus\{\beta_1\}=S^m$) and,
for $n=6$, $\Gamma_{\beta_1}= \Gamma^+$, $\Gamma_{\ep_6-\ep_3}=\Gamma^-$ (here, $\{\ep_6-\ep_3\}= S^-$) and $\Gamma^m=\bigsqcup_{\gamma\in S\setminus\{\beta_1,\,\ep_6-\ep_3\}}\Gamma_{\gamma}$ (here, $S\setminus\{\beta_1,\,\ep_6-\ep_3\}=S^m$).

In the extremal case, we will see that  conditions (2)-(3)-(4) of Lemma~\ref{enewnon-degeneracy} are more complicated to check than in the non-extremal case in type D since there are more roots in $O$ which donnot belong to $O_1\sqcup O_2$. We will show below that conditions (2)-(3)-(4) of Lemma~\ref{enewnon-degeneracy} hold.

\begin{lemma}\label{condDe}
Conditions (2)-(3)-(4) of Lemma~\ref{enewnon-degeneracy} hold.
\end{lemma}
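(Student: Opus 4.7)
The proof will proceed by a case analysis that parallels the treatment of Lemmas~\ref{Cond D} and~\ref{Condbis D} in the non-extremal case, but taking into account the extended notions of Section~9 because several Heisenberg sets now produce roots in $O_3$. I will split the verification according to which set $\Gamma_{\gamma}$, $\gamma\in S$, contains the root $\alpha$, using in a crucial way that the sets $\Gamma_{\gamma_j}$, $1\le j\le n/2+1$, were constructed so as to partition the ``lower right'' part of $\Delta^+\sqcup\Delta^-_{\pi'}$ involving the indices $n-5,\ldots,n$, and that the sets $\Gamma_{\beta_i}$, $1\le i\le n/2-2$, cover everything else up to a recursive subtraction.

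For condition (2), which is nonempty precisely for $\alpha\in\Gamma_{\beta_1}^0$ (note $S^+=\{\beta_1\}$ as soon as $n\ge 8$), I would argue as in the first case of the proof of Lemma~\ref{Cond B}: if $\beta\in O^+$ satisfies $\alpha+\beta\in S$, then using the fact that each $\Gamma_{\beta_i}\subset H_{\beta_i}$ and the property~\cite[Lemma 3 (5)]{FL} of maximal Heisenberg sets in the Kostant cascade, $\alpha+\beta\in H_{\beta_k}$ forces $k=1$ and $\beta=\theta(\alpha)$. For condition (3) (only present when $n=6$, where $S^-=\{\ep_6-\ep_3\}$), the containment $\Gamma_{\ep_6-\ep_3}\subset\Delta^-_{\pi'_1}$, with $\pi'_1$ the type-$A_5$ component of $\pi'$, restricts the possible sums $\alpha+\beta$ to $\Delta^-_{\pi'_1}\cap S=\{\ep_6-\ep_3\}$ and yields $\beta=\theta(\alpha)$ immediately.

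Condition (4) is where the real work lies. For each $\alpha\in O$ meeting $O^m$, I would first determine $\lvert S_\alpha\rvert$ by direct inspection of the explicit Heisenberg sets, thereby classifying $\alpha$ into $O_1$, $O_2$, or $O_3$. Roots in $O_1\sqcup O_2$ are handled by constructing the two sequences $(\alpha^i)$ and $(\alpha^{(i)})$ and checking that they become stationary after a bounded (in fact very small) number of steps, as was done in the proof of Lemma~\ref{Condbis D}. The genuinely new situations occur around the indices $\{n-5,n-4,n-3,n-2,n-1,n\}$, where I expect to find several extended cyclic triples $C_\alpha=\{\alpha,\beta,\gamma,\theta(\alpha),\theta(\beta),\theta(\gamma)\}$ realized inside $\Gamma_{\gamma_j}^0$ for $j\in\{n/2-1,n/2,n/2+1,n/2+2\}$; for each such triple I would exhibit the third root $\gamma$ satisfying (i)--(iii) of Definition~\ref{cyc}, and verify the extended condition (vie) by checking that for every $\delta\in C_\alpha\cap O_3$ the auxiliary root $\tilde\delta\in S_\delta\setminus C_\alpha$ starts a sequence $(\tilde\delta^i)_{i\in\mathbb N}$ that drops into $O_1\sqcup O_2$ after one or two steps. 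Uniformly for the generic index $i\in\{2,\ldots,n/2-3\}$, I expect the pattern to repeat with the centre shifted, so that a single generic argument covers most of the range and only the boundary indices and the case $n=6$ need separate inspection.

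The main obstacle will be the verification of condition (vie) for the roots in $O_3$ produced by the Heisenberg sets $\Gamma_{\ep_{n-4}-\ep_{n-5}}$, $\Gamma_{\ep_{n-2}-\ep_{n-4}}$, $\Gamma_{\ep_n-\ep_{n-3}}$ and $\Gamma_{\ep_{n-3}+\ep_{n-1}}$: unlike the non-extremal case of Section~\ref{SecD}, one cannot hope that $\theta(\tilde\delta)\in O_1$ in one step, so one has to trace the full sequence $(\tilde\delta^i)$ and establish that condition $(*)$ holds at every intermediate root in $A_{\tilde\delta}\cap O_3$. Once these book-keeping steps are carried out, Lemma~\ref{enewnon-degeneracy} applies and the non-degeneracy of $\Phi_y$ on $\o\times\o$ follows.
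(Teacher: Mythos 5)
Your overall strategy is the same as the paper's (a case-by-case analysis over the Heisenberg sets $\Gamma_\gamma$, using the extended stationary/cyclic machinery of Lemma~\ref{enewnon-degeneracy}), but as written the proposal is a plan rather than a proof: for condition (4), which is the entire substance of the lemma, you never actually identify which roots lie in $O_{st}^e$, which lie in $O_{cyc}^e$, and which are of the form $\tilde\beta^i$; you never exhibit a single cyclic triple, auxiliary root $\tilde\delta$, or stationary sequence, and you never verify condition $(*)$ or (vie) for any concrete root. These verifications are not routine: the paper's proof consists precisely of producing such witnesses for each of the eight families of Heisenberg sets (for instance, for $\alpha=\ep_{n-4}-\ep_{2i}\in\Gamma^0_{\ep_{n-4}-\ep_{n-5}}$ the cyclic triple is completed by $\beta=\ep_{2i-1}-\ep_{n-5}\in\Gamma^0_{\beta_i}$ and $\gamma=\ep_{2i-1}+\ep_{n-5}\in\Gamma^0_{\beta_{n/2-2}}$, and condition (vie) is checked through an explicit two-step sequence $\tilde\alpha,\tilde\alpha^1$). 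Statements of the form ``I expect to find cyclic triples inside $\Gamma^0_{\gamma_j}$ for $j\in\{n/2-1,\dots,n/2+2\}$'' leave exactly the part that needs proving unproved (and the prediction is not quite where the triples actually sit: they are organised around $\Gamma^0_{\ep_{n-4}-\ep_{n-5}}$ together with the sets $\Gamma^0_{\beta_i}$).

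Two of the steps you do spell out also need repair. First, in this extremal construction it is \emph{false} that each $\Gamma_{\beta_i}\subset H_{\beta_i}$: by definition $\Gamma_{\beta_i}$ for $i\le n/2-2$ is augmented by roots such as $\ep_{2i-1}-\ep_v,\ \ep_{2i}+\ep_v$ with $v\le 2i-2$, which lie outside $H_{\beta_i}$. Your argument for condition (2) survives only because $O^+=\Gamma^0_{\beta_1}$ and $\Gamma_{\beta_1}\subset H_{\beta_1}$ does hold, so the appeal to \cite[Lemma 3 (5)]{FL} must be restricted to that single set (and one must still note that $H_{\beta_1}\cap S=\{\beta_1\}$). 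Second, for condition (3) with $n=6$ your claim that $\Delta^-_{\pi'}\cap S=\{\ep_6-\ep_3\}$ is wrong: $\ep_2-\ep_1$ and $\ep_4-\ep_2$ also belong to $S\cap\Delta^-_{\pi'}$. The conclusion still holds, but only after checking directly that no two elements of $\Gamma^0_{\ep_6-\ep_3}=\{\ep_6-\ep_4,\ep_4-\ep_3,\ep_6-\ep_5,\ep_5-\ep_3\}$ sum to one of these extra roots; the containment in $\Delta^-_{\pi'}$ alone does not give the claimed restriction.
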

\begin{proof}
(a) \underline{The roots in $\Gamma^0_{\ep_{2k}-\ep_{2k-2}}$, $2\le k\le n/2-3$}.

Here one checks that all roots belong to $O_{st}^e$.
Let us explain the case when $\alpha=\ep_{2k}-\ep_i$, $1\le i\le 2k-3$, with $i$ even.
If $i=2$, then $\theta(\alpha)\in O_2$, $\alpha^1=\ep_1+\ep_{2k-2}$ and $\theta(\alpha^1)\in O_1$, hence $\alpha^2=\alpha^1$. If $2k=n-6$, then $\alpha\in O_2$, $\alpha^{(1)}=\ep_{n-7}+\ep_i$ and $\theta(\alpha^{(1)})\in O_1$, hence $\alpha^{(2)}=\alpha^{(1)}$.
In the other cases, $\alpha\in O_3$ and $\theta(\alpha)\in O_3$ and they verify condition $(*)$. Indeed $\alpha'=\ep_{2k-1}+\ep_i\in\Gamma^0_{\beta_{i/2}}\cap O_2$ and $\theta(\alpha')=\ep_{i-1}-\ep_{2k-1}\in O_1$. Similarly $\theta(\alpha)'=\ep_{i-1}+\ep_{2k-2}\in\Gamma^0_{\beta_{k-1}}\cap O_2$ and $\theta(\theta(\alpha)')=\ep_{2k-3}-\ep_{i-1}\in O_1$.
Moreover $\alpha^{(1)}=\ep_{i+2}-\ep_{2k}\in\Gamma^0_{\ep_{2k+2}-\ep_{2k}}$ and $\alpha^1=\ep_{2k-2}-\ep_{i-2}\in\Gamma^0_{\ep_{2k-2}-\ep_{2k-4}}$,  then one deduces by induction that the sequences $(\alpha^{(i)})_{i\in\mathbb N}$ and  $(\alpha^{i})_{i\in\mathbb N}$ are stationary and that $A_{\alpha}\cup A_{\theta(\alpha)}\subset O_1\sqcup O_2\sqcup O_3$ with condition $(*)$ satisfied for all roots in $(A_{\alpha}\cup A_{\theta(\alpha)})\cap O_3$.

(b)\underline{ The roots in $\Gamma^0_{\ep_{n-3}-\ep_{n-6}}$}.

Here one checks that all roots belong to $O_{st}^e$. Let us explain the case when $\alpha=\ep_{n-3}-\ep_i$, $1\le i\le n-7$, with $i$ even. Then $\alpha$ and $\theta(\alpha)$ belong to $O_3$ and verify condition $(*)$. Indeed $\alpha'=\ep_{n-1}+\ep_i\in\Gamma^0_{\beta_{i/2}}\cap O_2$, $\theta(\alpha')=\ep_{i-1}-\ep_{n-1}\in O_1$, and $\theta(\alpha)'=\ep_{i-1}+\ep_{n-6}\in\Gamma^0_{\beta_{n/2-3}}\cap O_2$ and $\theta(\theta(\alpha)')=\ep_{n-7}-\ep_{i-1}\in O_1$. Moreover $\alpha^1=\ep_{n-6}-\ep_{i-2}\in\Gamma^0_{\ep_{n-6}-\ep_{n-8}}$, and  paragraph $(a)$ above gives that $\alpha^1\in O_{st}^e$ then, by Remark~\ref{est},  the sequence $(\alpha^{i})_{i\in\mathbb N}$ is stationary and $A_{\alpha}\subset O_1\sqcup O_2\sqcup O_3$ with condition $(*)$ satisfied for all roots in $A_{\alpha}\cap O_3$.
On the other hand, one has that $\alpha^{(1)}=\ep_{i+2}-\ep_{n-3}\in\Gamma^0_{\ep_n-\ep_{n-3}}\cap O_2$, $\theta(\alpha^{(1)})=\ep_n-\ep_{i+2}\in O_2$, (unless  $i=n-8$, in which case $\theta(\alpha^{(1)})\in O_1$), $\alpha^{(2)}=\ep_{i+4}-\ep_n\in\Gamma^0_{\beta_{(i+4)/2}}\cap O_2$ and $\theta(\alpha^{(2)})=\ep_{i+3}+\ep_n\in O_1$. Hence $\alpha^{(3)}=\alpha^{(2)}$ and the sequence $(\alpha^{(i)})_{i\in\mathbb N}$ is stationary and $A_{\theta(\alpha)}\subset O_1\sqcup O_2\sqcup O_3$ with condition $(*)$ satisfied for all roots in $A_{\theta(\alpha)}\cap O_3$. This proves that $\alpha\in O_{st}^e$.

(c) \underline{The roots in $\Gamma^0_{\ep_{n-4}-\ep_{n-5}}$}.

Here one checks that all roots belong to $O_{cyc}^e$.

Let us explain the case when $\alpha=\ep_{n-4}-\ep_{2i}\in\Gamma^0_{\ep_{n-4}-\ep_{n-5}}$, $1\le i\le n/2-3$. Let $\beta=\ep_{2i-1}-\ep_{n-5}\in\Gamma^0_{\beta_i}$ and $\gamma=\ep_{2i-1}+\ep_{n-5}\in\Gamma^0_{\ep_{n-4}+\ep_{n-5}}$. Then $\alpha,\,\beta,\,\gamma$ verify  the cyclic relations (i)-(iii) of Definition~\ref{cyce} and $\beta,\,\theta(\beta),\,\gamma,\,\theta(\gamma)$ belong to $O_2$. Moreover $\alpha$ and $\theta(\alpha)$ belong to $O_3$ (unless $2i= n-6$, in which case $\alpha\in O_2$ or $i=1$, in which case $\theta(\alpha)\in O_2$). If $2i\le n-8$, $\tilde\alpha=\ep_{2i+2}-\ep_{n-4}\in\Gamma^0_{\ep_{n-2}-\ep_{n-4}}\cap O_3\cap S_{\alpha}$  is such that $\theta(\tilde\alpha)=\ep_{n-2}-\ep_{2i+2}\in O_2$ and, if $2i\le n-10$, $\tilde\alpha^1=\ep_{2i+4}-\ep_{n-2}\in\Gamma^0_{\beta_{i+2}}\cap O_2$ and $\theta(\tilde\alpha^1)=\ep_{2i+3}+\ep_{n-2}\in O_1$ and if $2i=n-8$, then $\tilde\alpha^1=\ep_{n-3}-\ep_{n-2}\in\Gamma^0_{\ep_{n-3}+\ep_{n-1}}\cap O_2$ and $\theta(\tilde\alpha^1)=\ep_{n-2}+\ep_{n-1}\in O_1$.

Let $\tilde\alpha'=\ep_{2i+1}+\ep_{n-4}\in\Gamma^0_{\beta_{n/2-2}}\cap O_2\cap S_{\tilde\alpha}$. Then $\theta(\tilde\alpha')=\ep_{n-5}-\ep_{2i+1}\in O_1$. Hence $\tilde\alpha$ satisfies  condition $(*)$ and the sequence $(\tilde\alpha^i)_{i\in\mathbb N}$ is stationary.

Similarly if $i\ge 2$, one verifies that the sequence $(\widetilde{\theta(\alpha)}^i)_{i\in\mathbb N}$ is stationary and that $A_{\widetilde{\theta(\alpha)}}\subset O_1\sqcup O_2$.

(d) \underline{The roots in $\Gamma^0_{\ep_{n-2}-\ep_{n-4}}$}.
Here one checks that there exists $\beta\in O_3\cap O_{cyc}^e$ and $i\in\mathbb N$ such that 
$\alpha=\tilde\beta^i$ or $\theta(\alpha)=\tilde\beta^i$, unless some particular cases for which $\alpha\in O_{st}$.

For instance assume that $\alpha=\ep_{n-2}-\ep_i\in\Gamma^0_{\ep_{n-2}-\ep_{n-4}}$, $1\le i\le n-7$ with
$i$ is odd. Let $\beta=\ep_{i+3}-\ep_{n-5}$ if $i\le n-9$, resp. $\beta=\ep_{n-3}-\ep_{n-5}$ if $i=n-7$.
By paragraph (c) above one has that $\beta\in\Gamma^0_{\ep_{n-4}-\ep_{n-5}}\cap O_3\cap O_{cyc}^e$.Then $\tilde\beta=\ep_{n-5}-\ep_{i+1}\in\Gamma^0_{\beta_{n/2-2}}\cap S_{\beta}\cap O_2$, $\theta(\tilde\beta)=\ep_{n-4}+\ep_{i+1}\in O_2$ and $\tilde\beta^1=\ep_i-\ep_{n-4}\in\Gamma^0_{\ep_{n-2}-\ep_{n-4}}\cap O_2$ and $\alpha=\theta(\tilde\beta^1)\in O_1$. Hence $\beta$ satisfies condition (vie) of Definition~\ref{cyce} and $\theta(\alpha)={\tilde\beta}^1$.

Hence $\alpha$ satisfies the last part of condition (4) of Lemma~\ref{enewnon-degeneracy}.

(e) \underline{The roots in $\Gamma^0_{\ep_n-\ep_{n-3}}$}.

Here one checks that all roots belong to $O_{st}^e$.

(f) \underline{The roots in $\Gamma^0_{\ep_{n-3}+\ep_{n-1}}$}.

Here one checks that all roots belong to $O_{st}^e$, except when $\alpha=\ep_{n-3}-\ep_{n-2}$ with $n\ge 10$, in which case $\alpha=\tilde\beta^1$, for $\beta=\ep_{n-4}-\ep_{n-8}\in\Gamma^0_{\ep_{n-4}-\ep_{n-5}}\cap O_3\cap O_{cyc}^e$  by paragraph (c) above; or when $\alpha=\ep_{n-1}-\ep_{n-5}$, or $\alpha=\ep_{n-5}+\ep_{n-3}$, in which case $\alpha\in O_{cyc}^e$ by paragraph (c) above.

(g) \underline{The roots in $\Gamma^0_{\beta_{n/2-2}}$}.

One has that $\beta_{n/2-2}=\ep_{n-5}+\ep_{n-4}$ and
$\Gamma^0_{\beta_{n/2-2}}=\{\ep_{n-5}+\ep_i,\,\ep_{n-4}-\ep_i,\,\ep_{n-5}-\ep_j,\,\ep_j+\ep_{n-4},\,\ep_{n-5}+\ep_{2k-1},\,\ep_{n-4}-\ep_{2k-1}\mid
n-2\le i\le n,\, 1\le j\le n,\,j\not\in\{n-5,\,n-4\},\,1\le k\le n/2-3\}$.

Using paragraphs (c) or (d) above, one checks that $\alpha\in O_{st}^e$ or that $\alpha\in O_{cyc}^e$, unless $\alpha=\ep_{n-5}-\ep_j$,  or $\alpha=\ep_j+\ep_{n-4}$, $1\le j\le n-6$, $j$ even, in which case $\alpha=\tilde\beta$ or $\theta(\alpha)=\tilde\beta$, where $\beta\in\Gamma^0_{\ep_{n-4}-\ep_{n-5}}\cap O_{cyc}^e\cap O_3$ by paragraph (c) above.

Thus condition (4) of Lemma~\ref{enewnon-degeneracy} holds for all roots in $\Gamma^0_{\beta_{n/2-2}}$.
Also condition (2) for $n=6$ holds since one may verify that, if $\alpha\in\Gamma^0_{\beta_1}$, then $S_{\alpha}\cap\Gamma^0_{\beta_1}=\{\theta(\alpha)\}$.

(h) \underline{The roots in $\Gamma^0_{\beta_{i}}$, with $1\le i\le n/2-3$}.

Observe that this implies that $n\ge 8$.

Recall that $\beta_i=\ep_{2i-1}+\ep_{2i}$ and observe that
$\Gamma^0_{\beta_i}=\{\ep_{2i-1}+\ep_{2j-1},\,\ep_{2i}-\ep_{2j-1},\,\ep_{2i-1}-\ep_{2k-1},\,\ep_{2i}+\ep_{2k-1},\,\ep_{2i-1}\pm\ep_u,\,\ep_{2i}\mp\ep_u\mid i+1\le j\le n/2-3,\,i+1\le k\le n/2-2,\,n-2\le u\le n\}\sqcup\{\ep_{2i-1}-\ep_v,\,\ep_{2i}+\ep_v\mid 1\le v\le 2i-2\}$.

Here one checks, using the above paragraphs, that $\alpha\in O_{st}^e$, unless $\alpha=\ep_{2i-1}-\ep_{n-5}\in O_{cyc}^e$, resp. $\alpha=\ep_{2i}+\ep_{n-5}\in O_{cyc}^e$, or when $\alpha=\ep_{2i-1}+\ep_{n-2}$, resp. $\alpha=\ep_{2i}-\ep_{n-2}$, and $i\ge 3$, in which case $\theta(\alpha)=\tilde\beta^1$, resp. $\alpha=\tilde\beta^1$, with $\beta\in\Gamma^0_{\ep_{n-4}-\ep_{n-5}}\cap O_{cyc}^e$ by paragraph (c) above.

Thus condition (4) of Lemma~\ref{enewnon-degeneracy} holds for all roots in $\Gamma^0_{\beta_{i}}$, with $1\le i\le n/2-3$. Also condition (2) for $i=1$ and $n\ge 8$ holds since  one may verify that, if $\alpha\in\Gamma^0_{\beta_1}$, then $S_{\alpha}\cap\Gamma^0_{\beta_1}=\{\theta(\alpha)\}$.
\end{proof}


Recall that we denote by $T$ the complement of the set $\Gamma=\bigsqcup_{\gamma\in S}\Gamma_{\gamma}$ in $\Delta^+\sqcup\Delta^-_{\pi'}$.

\begin{lemma}\label{TDe} 

One has that $\lvert T\rvert=\ind \p$.
\end{lemma}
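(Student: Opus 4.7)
The plan is to evaluate both sides of the identity separately and compare.

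For the right-hand side, I would use the formula $\ind \mathfrak{p} = |E(\pi')|$ from \cite[3.2]{FJ1} recalled in Section~\ref{standardnotation}. Since $\mathfrak{g}$ is of type $\mathrm{D}_n$ with $n$ even, the longest Weyl group element satisfies $w_0 = -\mathrm{id}$, so $\mathbf{j} = \mathrm{id}_\pi$. The Levi subsystem $\pi'$ has type $\mathrm{A}_{n-1}$, so $w_0'$ induces $\mathbf{i}(\alpha_k) = \alpha_{n-k}$ on $\pi'$. For $\alpha_n \in \pi \setminus \pi'$, the definition of $\mathbf{i}$ for simple roots outside $\pi'$ gives $r = 0$ (since $\mathbf{j}(\alpha_n) = \alpha_n \notin \pi'$ already), hence $\mathbf{i}(\alpha_n) = \alpha_n$. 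The $\langle\mathbf{ij}\rangle$-orbits on $\pi$ are therefore $\{\alpha_k, \alpha_{n-k}\}$ for $1 \le k \le n/2 - 1$, together with the two singletons $\{\alpha_{n/2}\}$ and $\{\alpha_n\}$, giving $\ind \mathfrak{p} = n/2 + 1$.

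For the left-hand side, the plan is to produce an explicit description of $T = (\Delta^+ \sqcup \Delta^-_{\pi'}) \setminus \Gamma$ by tracing through the six explicit Heisenberg families $\Gamma_{\epsilon_{2k}-\epsilon_{2k-2}}$, $\Gamma_{\epsilon_{n-3}-\epsilon_{n-6}}$, $\Gamma_{\epsilon_{n-4}-\epsilon_{n-5}}$, $\Gamma_{\epsilon_{n-2}-\epsilon_{n-4}}$, $\Gamma_{\epsilon_n-\epsilon_{n-3}}$, $\Gamma_{\epsilon_{n-3}+\epsilon_{n-1}}$ together with the $\Gamma_{\beta_i}$ for $1 \le i \le n/2 - 2$, and determining exactly which roots of $\Delta^+ \sqcup \Delta^-_{\pi'}$ escape them. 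By analogy with Lemmas~\ref{setTB} and~\ref{setTD}, one expects $T$ to consist of a short list of roots (certain short differences $\epsilon_{2i-1} - \epsilon_{2i}$ and a few boundary roots), whose cardinality turns out to be exactly $n/2 + 1$; the identity then follows by direct comparison.

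The main obstacle is the bookkeeping for the explicit description of $T$. The $\Gamma_{\beta_i}$ are defined by decreasing induction starting from $\Gamma_{\beta_{n/2-2}}$, each absorbing the residue of $H_{\beta_i}$ relative to the previously constructed Heisenberg sets together with an ad hoc augmentation involving roots $\epsilon_p \pm \epsilon_{n-2k-2}$ and $\epsilon_{n-2k-3} \mp \epsilon_p$. Verifying that each positive root is placed in exactly one Heisenberg set and that the negative roots of $\Delta^-_{\pi'}$ are correctly partitioned by the six explicit families (except for the elements ending up in $T$) is the part that requires the most care; once $T$ has been pinned down explicitly, counting immediately yields $|T| = n/2 + 1 = \ind \mathfrak{p}$.
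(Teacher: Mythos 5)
Your proposal is correct and follows essentially the same route as the paper: one determines $T$ explicitly from the Heisenberg decomposition (the paper records $T=\{\ep_{n-3}-\ep_{n-1},\,\ep_{n-2}+\ep_n,\,\ep_n-\ep_{n-5},\,\ep_{n-3}-\ep_{n-4},\,\ep_{n-2k}-\ep_{n-2k-1}\mid 3\le k\le n/2-1\}$, of cardinality $n/2+1$) and compares with the number of $\langle {\bf ij}\rangle$-orbits, which you compute exactly as the paper does ($\{\alpha_t,\alpha_{n-t}\}$ for $1\le t\le n/2-1$, $\{\alpha_{n/2}\}$ and $\{\alpha_n\}$). The only remaining work in your plan is the bookkeeping identifying $T$, which is precisely the ``one checks'' step of the paper's own proof.
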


\begin{proof}
One checks that $T=\{\ep_{n-3}-\ep_{n-1},\,\ep_{n-2}+\ep_n,\,\ep_n-\ep_{n-5},\,\ep_{n-3}-\ep_{n-4},\,\ep_{n-2k}-\ep_{n-2k-1}\mid 3\le k\le n/2-1\}$.
Then $\lvert T\rvert=n/2+1$.

Moreover the $\langle\bf{ ij}\rangle$-orbits in $\pi$ are $\Gamma_t=\{\alpha_t,\,\alpha_{n-t}\}$ for all $1\le t\le n/2-1$, $\Gamma_{n/2}=\{\alpha_{n/2}\}$ and $\Gamma_n=\{\alpha_n\}$.
They are $n/2+1$ in number, hence the lemma.
\end{proof}

\begin{Remark}\label{remcond}\rm
All conditions of Lemma~\ref{refineregular} are satisfied. Hence defining $h\in\h_{\Lambda}$ by $\gamma(h)=-1$ for all $\gamma\in S$,
and setting $y=\sum_{\gamma\in S}x_{\gamma}$ we obtain an adapted pair $(h,\,y)$  for $\p^-_{\pi',\,\Lambda}$.
\end{Remark}


\begin{lemma}\label{eigenvaluesDe}
The semisimple element $h$ of the above adapted pair for $\p^-_{\pi',\,\Lambda}$ is

$h=-\ep_2+5\ep_3-2\ep_4-6\ep_5+4\ep_6$ for $n=6$, 
and for $n\ge 8$

\noindent
$\begin{array}{ll}
h=& -n\ep_1+\sum_{k=1}^{n/2-4}(k-n)\ep_{2k+1}+\sum_{k=1}^{n/2-3}(n-k)\ep_{2k}-\ep_{n-4}+
 (n/2+2)\ep_{n-3}\\&-2\ep_{n-2}-(n/2+3)\ep_{n-1}+(n/2+1)\ep_n.\end{array}$ 

Then the eigenvalues of $\ad h$ on $\g_T$ are :

$\bullet$ $2(n-i)+1=(\ep_{2i}-\ep_{2i-1})(h)$ for all $1\le i\le n/2-3$.

$\bullet$ $n+5=(\ep_{n-3}-\ep_{n-1})(h)$.

$\bullet$ $n/2-1=(\ep_{n-2}+\ep_n)(h)$.

$\bullet$ $n/2+1=(\ep_n-\ep_{n-5})(h)$.

$\bullet$ $n/2+3=(\ep_{n-3}-\ep_{n-4})(h)$.

From the first two equalities, we have that $n+4+2k-1$ is an eigenvalue of $\ad h$ on $\g_T$, for all $k\in\mathbb N$, with $1\le k\le n/2-2$.

\end{lemma}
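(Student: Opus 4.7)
The argument has two independent parts: verifying the closed-form expression for the semisimple element $h$, and then reading off the eigenvalues of $\ad h$ on $\g_T$. Both amount to routine computation, since the conceptual work (construction of the adapted pair, which guarantees existence and uniqueness of $h$ via the defining relations $\gamma(h)=-1$ for $\gamma\in S$, together with the fact that $S_{|\h_\Lambda}$ is a basis of $\h_\Lambda^*$) was already done in Remark~\ref{remcond} and the opening lemma of Section~\ref{SecDe}.

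For the formula for $h$, I would check the defining relations $\gamma(h)=-1$ family by family, splitting $S$ according to its description into: the cascade roots $\ep_{2i-1}+\ep_{2i}$ with $1\le i\le n/2-2$; the root $\ep_{n-3}+\ep_{n-1}$; the pair $\ep_n-\ep_{n-3}$, $\ep_{n-2}-\ep_{n-4}$; the pair $\ep_{n-4}-\ep_{n-5}$, $\ep_{n-3}-\ep_{n-6}$; and the chain $\ep_{n-2j}-\ep_{n-2j-2}$, $3\le j\le n/2-2$. For each family, I would extract the coefficients of the relevant $\ep_i$ from the stated formula for $h$ and verify that they sum to $-1$. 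Two mild subtleties arise: first, one must track indices falling outside the summation ranges of the displayed formula (for instance $\ep_{n-5}$ has coefficient $0$, since $(n-6)/2$ sits one step above the upper bound $n/2-4$ of the first sum); second, the case $n=6$ is treated separately, where the interior sums collapse and one verifies $\gamma(h)=-1$ directly against the explicit $h=-\ep_2+5\ep_3-2\ep_4-6\ep_5+4\ep_6$ for each of $\ep_1+\ep_2$, $\ep_3+\ep_5$, $\ep_6-\ep_3$, $\ep_4-\ep_2$, $\ep_2-\ep_1$.

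The eigenvalues on $\g_T$ are then obtained by substituting each $\beta\in T$ (as determined in Lemma~\ref{TDe}) into the formula for $h$, producing the five displayed identities directly. The closing observation that $n+4+2k-1$ is an eigenvalue for every $1\le k\le n/2-2$ consolidates the first two listed families: $k=1$ corresponds to $\ep_{n-3}-\ep_{n-1}\in T$ giving $n+5$, while $k=2,\ldots,n/2-2$ corresponds to the root $\ep_{n-2k}-\ep_{n-2k-1}\in T$ (equivalently, to the index $i=n/2-k$ in the first family $\ep_{2i}-\ep_{2i-1}$), yielding the consecutive odd integers $n+7,\,n+9,\,\ldots,\,2n-1$.

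The main (and only) bookkeeping hazard is the piecewise nature of the formula for $h$ near the extreme indices $n-5,\ldots,n$; beyond careful case analysis there, no conceptual step is needed.
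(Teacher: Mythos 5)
Your approach is exactly what the paper intends: the lemma is proved by direct computation, checking $\gamma(h)=-1$ family by family over $S$ (with the boundary subtleties you note, e.g.\ the vanishing coefficient of $\ep_{n-5}$ and the collapsed sums at $n=6$) and then evaluating each root of $T$ from Lemma~\ref{TDe} on $h$; all of these verifications are correct. The only slip is in your final consolidation: for $2\le k\le n/2-2$ the eigenvalue $n+4+2k-1=n+2k+3$ comes from the index $i=n/2-k-1$ in the first family, i.e.\ from the root $\ep_{n-2k-2}-\ep_{n-2k-3}\in T$ (parameter $j=k+1$ in the description $\ep_{n-2j}-\ep_{n-2j-1}$, $3\le j\le n/2-1$), not from $\ep_{n-2k}-\ep_{n-2k-1}$ with $i=n/2-k$, which would give $n+2k+1$ instead; the resulting list of values $n+7,n+9,\ldots,2n-1$ is nevertheless the correct one, so the conclusion stands.
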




\begin{lemma}\label{lbDe}
 The lower bound $\ch\mathcal A$ for $Y(\p)$ is equal to
$$\prod_{\Gamma\in E(\pi')}(1-e^{\delta_{\Gamma}})^{-1}=(1-e^{-2\varpi_n})^{-3}(1-e^{-4\varpi_n})^{-(n/2-2)}\eqno (4)$$
\end{lemma}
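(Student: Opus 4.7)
The plan is to directly compute each $\delta_\Gamma$ for $\Gamma\in E(\pi')$ using the list of $\langle{\bf ij}\rangle$-orbits recalled in the proof of Lemma~\ref{TDe}: namely $\Gamma_t=\{\alpha_t,\alpha_{n-t}\}$ for $1\le t\le n/2-1$, $\Gamma_{n/2}=\{\alpha_{n/2}\}$ and $\Gamma_n=\{\alpha_n\}$. Since $n$ is even in type ${\rm D}_n$ we have $w_0=-{\rm id}$, hence ${\bf j}={\rm id}$ on $\pi$; on $\pi'$ (of type ${\rm A}_{n-1}$) the involution ${\bf i}$ sends $\alpha_k\mapsto\alpha_{n-k}$, and since ${\bf j}(\alpha_n)=\alpha_n\notin\pi'$ the definition of ${\bf i}$ on $\pi\setminus\pi'$ gives ${\bf i}(\alpha_n)=\alpha_n$ (with $r=0$). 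So ${\bf j}(\Gamma)=\Gamma$ and ${\bf i}(\Gamma\cap\pi')=\Gamma\cap\pi'$ for every orbit, and the formula for $\delta_\Gamma$ collapses to $\delta_{\Gamma_n}=-2\varpi_n$ and, for $\Gamma\subset\pi'$, $\delta_\Gamma=-2\sum_{\gamma\in\Gamma}(\varpi_\gamma-\varpi'_\gamma)$.

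The computational core is to express $\varpi_k-\varpi'_k$ as a multiple of $\varpi_n$. Using the standard identifications $\varpi_k=\ep_1+\cdots+\ep_k$ for $1\le k\le n-2$, $\varpi_{n-1}=\tfrac{1}{2}(\ep_1+\cdots+\ep_{n-1}-\ep_n)$, $\varpi_n=\tfrac{1}{2}(\ep_1+\cdots+\ep_n)$, together with $\varpi'_k=(\ep_1+\cdots+\ep_k)-(k/n)(\ep_1+\cdots+\ep_n)$ for the fundamental weights of the $\mathfrak{sl}_n$ Levi, a direct calculation yields
$$\varpi_k-\varpi'_k=\frac{2k}{n}\varpi_n\quad(1\le k\le n-2),\qquad\varpi_{n-1}-\varpi'_{n-1}=\frac{n-2}{n}\varpi_n.$$
The only bookkeeping that requires care is this anomalous value at $k=n-1$, which departs from the generic $(2k/n)\varpi_n$ pattern because $\varpi_{n-1}$ is a spin weight of ${\rm D}_n$; I expect this to be the main source of potential error.

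To assemble: for $2\le t\le n/2-1$ both indices $t$ and $n-t$ lie in $[1,n-2]$, so $\delta_{\Gamma_t}=-2((2t/n)+(2(n-t)/n))\varpi_n=-4\varpi_n$, contributing $n/2-2$ orbits. For $\Gamma_1=\{\alpha_1,\alpha_{n-1}\}$ the anomaly at $n-1$ conspires with $\varpi_1-\varpi'_1=(2/n)\varpi_n$ to give $\delta_{\Gamma_1}=-2((2/n)+((n-2)/n))\varpi_n=-2\varpi_n$. Finally $\delta_{\Gamma_{n/2}}=-2(2\cdot(n/2)/n)\varpi_n=-2\varpi_n$ and $\delta_{\Gamma_n}=-2\varpi_n$. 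Thus exactly three orbits ($\Gamma_1,\Gamma_{n/2},\Gamma_n$) contribute $-2\varpi_n$ while the remaining $n/2-2$ orbits contribute $-4\varpi_n$, and substituting into $\ch\mathcal A=\prod_{\Gamma\in E(\pi')}(1-e^{\delta_\Gamma})^{-1}$ gives the right hand side of (4). (As a sanity check one verifies the small cases $n=6$ and $n=8$ by hand.)
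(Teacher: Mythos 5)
Your proof is correct and follows essentially the same route as the paper: it uses the orbit list from Lemma~\ref{TDe}, the reduction $\delta_\Gamma=-2\sum_{\gamma\in\Gamma}(\varpi_\gamma-\varpi'_\gamma)$ (valid since ${\bf j}={\rm id}$ for $n$ even), and the identities $\varpi_t-\varpi'_t=(2t/n)\varpi_n$ for $t\le n-2$ and $\varpi_{n-1}-\varpi'_{n-1}=((n-2)/n)\varpi_n$, exactly as in the paper's proof. You merely make explicit the verifications the paper leaves to the reader, and the final tally of three orbits with $\delta_\Gamma=-2\varpi_n$ and $n/2-2$ orbits with $\delta_\Gamma=-4\varpi_n$ agrees with (4).
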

\begin{proof}
One checks that, for all $t\in\mathbb N$, $1\le t\le n-2$, $\varpi_t-\varpi'_t=(2t/n)\varpi_n$ and that $\varpi_{n-1}-\varpi'_{n-1}=((n-2)/n)\varpi_n$. Then with the notations of the proofs of Lemma~\ref{calculboundsB} and of Lemma~\ref{TDe} one has
that, for all $t\in\mathbb N$, $2\le t\le n/2-1$, $\delta_{\Gamma_t}=-4\varpi_n$, whereas $\delta_{\Gamma_1}=\delta_{\Gamma_{n/2}}=\delta_{\Gamma_n}=-2\varpi_n$. Hence equality (4) holds.
\end{proof}


 \begin{lemma}\label{improvboundDe}

The improved upper bound $\mathcal B'$ for ${\rm ch}\,Y(\p)$ is equal to the lower bound, given by the right hand side of (4).
\end{lemma}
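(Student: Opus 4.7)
The plan is to follow the same strategy as in the proofs of Lemmas~\ref{improvboundB} and~\ref{improvboundD}: for each $\gamma\in T$ we exhibit an explicit element $t(\gamma)\in\mathbb Q S$ such that $\gamma+t(\gamma)$ vanishes on $\h_\Lambda$, and then read off the resulting product. Since $\h_\Lambda$ is spanned by $\{\alpha_i^\vee\mid 1\le i\le n-1\}$, a weight vanishes on $\h_\Lambda$ exactly when it is a scalar multiple of the fundamental weight $\varpi_n=\tfrac12(\ep_1+\cdots+\ep_n)$. Thus for every $\gamma\in T$ one will have $\gamma+t(\gamma)=c_\gamma\varpi_n$ for some positive integer $c_\gamma$, and the claim is that $c_\gamma=2$ occurs exactly three times while $c_\gamma=4$ occurs exactly $n/2-2$ times.

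Recall from the proof of Lemma~\ref{TDe} that
$$T=\{\ep_{n-3}-\ep_{n-1},\,\ep_{n-2}+\ep_n,\,\ep_n-\ep_{n-5},\,\ep_{n-3}-\ep_{n-4}\}\sqcup\{\ep_{n-2k}-\ep_{n-2k-1}\mid 3\le k\le n/2-1\}.$$
First I would treat the three ``boundary'' roots $\ep_{n-2}+\ep_n$, $\ep_n-\ep_{n-5}$ and $\ep_{n-3}-\ep_{n-4}$: for each of them a short $\mathbb Z$-combination of the roots $\beta_i=\ep_{2i-1}+\ep_{2i}$ $(1\le i\le n/2-2)$ together with the elements $\ep_{n-3}+\ep_{n-1}$, $\ep_n-\ep_{n-3}$, $\ep_{n-2}-\ep_{n-4}$ and $\ep_{n-4}-\ep_{n-5}$ of $S$ yields $\gamma+t(\gamma)=\sum_{i=1}^n\ep_i=2\varpi_n$. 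For instance one may take
$$t(\ep_{n-2}+\ep_n)=\sum_{i=1}^{n/2-2}(\ep_{2i-1}+\ep_{2i})+(\ep_{n-3}+\ep_{n-1}),$$
and analogous explicit combinations handle the other two cases.

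Next I would treat the remaining $n/2-2$ roots. For $\gamma=\ep_{n-3}-\ep_{n-1}$ doubling a similar combination and using the chain $\ep_n-\ep_{n-3},\,\ep_{n-3}+\ep_{n-1}\in S$ to cancel signs gives $\gamma+t(\gamma)=2\sum_{i=1}^n\ep_i=4\varpi_n$. For $\gamma=\ep_{n-2k}-\ep_{n-2k-1}$ with $3\le k\le n/2-1$, one uses telescoping combinations of the chain roots $\ep_{n-2j}-\ep_{n-2j-2}$ $(3\le j\le n/2-2)$ together with $\ep_{n-3}-\ep_{n-6}$ and $\ep_{n-4}-\ep_{n-5}$, doubled and combined with the $\beta_i$'s, so as to reconstruct $4\varpi_n$. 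This telescoping is the main computational obstacle; it is completely analogous to (though slightly more intricate than) the one appearing in the proof of Lemma~\ref{improvboundB} for the roots $\ep_{2i-1}-\ep_{2i}\in T$ in type ${\rm B}$.

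Once all $t(\gamma)$ are fixed and the multiples $c_\gamma$ recorded, we obtain
$$\mathcal B'=\prod_{\gamma\in T}(1-e^{-(\gamma+t(\gamma))})^{-1}=(1-e^{-2\varpi_n})^{-3}(1-e^{-4\varpi_n})^{-(n/2-2)},$$
which coincides with the lower bound $\ch\mathcal A$ computed in Lemma~\ref{lbDe}. In view of the sandwich $\ch\mathcal A\le\ch Y(\p)\le\mathcal B'$ of Section~\ref{standardnotation}, this forces $\ch Y(\p)=\mathcal B'=\ch\mathcal A$, and by the last paragraph of Section~\ref{standardnotation} we conclude that $Y(\p)$ is a polynomial algebra over $k$.
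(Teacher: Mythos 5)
Your proposal is correct and takes essentially the same route as the paper: one computes, for each $\gamma\in T$, the unique $t(\gamma)\in\mathbb Q S$ with $\gamma+t(\gamma)$ a multiple of $\varpi_n$, and your partition of $T$ (the three roots $\ep_{n-2}+\ep_n$, $\ep_n-\ep_{n-5}$, $\ep_{n-3}-\ep_{n-4}$ giving $2\varpi_n$, and $\ep_{n-3}-\ep_{n-1}$ together with the roots $\ep_{n-2k}-\ep_{n-2k-1}$, $3\le k\le n/2-1$, giving $4\varpi_n$), as well as your formula for $t(\ep_{n-2}+\ep_n)$, coincide with the paper's explicit computations. The only difference is that the paper writes out all the remaining $t(\gamma)$'s explicitly, whereas you defer them to an ``analogous/telescoping'' verification; since the coefficients you assert are the correct ones, this is an omission of routine computation rather than a flaw in the argument.
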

\begin{proof}
Recall that, if for every $\gamma\in T$, $t(\gamma)$ denotes the unique element in $\mathbb QS$ such that $\gamma+t(\gamma)$ is a multiple of $\varpi_n$, then the improved upper bound is given by the product $\prod_{\gamma\in T}(1-e^{-(\gamma+t(\gamma))})^{-1}$.

One verifies that :

$\bullet$ $t(\ep_{n-2}+\ep_n)=\sum_{1\le i\le n/2-2}(\ep_{2i-1}+\ep_{2i})+(\ep_{n-3}+\ep_{n-1})$
and that $\ep_{n-2}+\ep_n+t(\ep_{n-2}+\ep_n)=2\varpi_n$.

$\bullet$ $t(\ep_n-\ep_{n-5})=\sum_{1\le i\le n/2-3}(\ep_{2i-1}+\ep_{2i})+2(\ep_{n-5}+\ep_{n-4})+(\ep_{n-3}+\ep_{n-1})+(\ep_{n-2}-\ep_{n-4})$ and $\ep_n-\ep_{n-5}+t(\ep_n-\ep_{n-5})=2\varpi_n$.

$\bullet$ $t(\ep_{n-3}-\ep_{n-4})=\sum_{1\le i\le n/2-3}(\ep_{2i-1}+\ep_{2i})+2(\ep_{n-5}+\ep_{n-4})+(\ep_{n-3}+\ep_{n-1})+(\ep_n-\ep_{n-3})+(\ep_{n-2}-\ep_{n-4})+(\ep_{n-4}-\ep_{n-5})$, and $\ep_{n-3}-\ep_{n-4}+t(\ep_{n-3}-\ep_{n-4})=2\varpi_n$.

$\bullet$ $t(\ep_{n-3}-\ep_{n-1})=2\sum_{1\le i\le n/2-3}(\ep_{2i-1}+\ep_{2i})+3(\ep_{n-5}+\ep_{n-4})+3(\ep_{n-3}+\ep_{n-1})+2(\ep_n-\ep_{n-3})+2(\ep_{n-2}-\ep_{n-4})+(\ep_{n-4}-\ep_{n-5})$ and $\ep_{n-3}-\ep_{n-1}+t(\ep_{n-3}-\ep_{n-1})=4\varpi_n$.

$\bullet$ For $3\le k\le n/2-1$, $t(\ep_{n-2k}-\ep_{n-2k-1})=2\sum_{1\le i\le n/2-3,\,i\neq n/2-k}(\ep_{2i-1}+\ep_{2i})+3(\ep_{n-5}+\ep_{n-4})+3(\ep_{n-2k-1}+\ep_{n-2k})+2(\ep_{n-3}+\ep_{n-1})+2(\ep_n-\ep_{n-3})+2(\ep_{n-2}-\ep_{n-4})+(\ep_{n-4}-\ep_{n-5})+2(\ep_{n-3}-\ep_{n-6})+2\sum_{3\le j\le k-1}(\ep_{n-2j}-\ep_{n-2j-2})$ and $\ep_{n-2k}-\ep_{n-2k-1}+t(\ep_{n-2k}-\ep_{n-2k-1})=4\varpi_n$.

Thus the improved upper bound is equal to the right hand side of $(4)$.
\end{proof}

One can now give the following

\begin{theorem}\label{conDe}
Let $\g$ be a simple Lie algebra of type ${\rm D}_n$ with $n$ an even integer, $n\ge 6$, and let $\p=\p^-_{\pi',\,\Lambda}$ be the truncated maximal parabolic subalgebra of $\g$ associated to $\pi'=\pi\setminus\{\alpha_n\}$.

There exists an adapted pair $(h,\,y)$ for $\p$ and an affine slice $y+\g_T$ in $\p^*$ such that  restriction of functions gives an isomorphism of algebras
between $Y(\p)$ and the ring $R[y+\g_T]$ of polynomial functions on $y+\g_T$.

In particular $Y(\p)$ is a polynomial algebra over $k$, the degrees of a set of homogeneous generators are the eigenvalues plus one
of $\ad h$ on $\g_T$ (Lemma~\ref{eigenvaluesDe}) and the field $C(\p^-_{\pi'})$ of invariant fractions is a purely transcendental extension of $k$.
\end{theorem}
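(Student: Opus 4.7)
The plan is to assemble the pieces already established in this section into a direct proof, mirroring the short proofs of Theorems~\ref{conB} and~\ref{conD}. First I would invoke Remark~\ref{remcond}: taking $h\in\h_{\Lambda}$ defined by $\gamma(h)=-1$ for all $\gamma\in S$ and $y=\sum_{\gamma\in S}x_{\gamma}$, one obtains an adapted pair $(h,\,y)$ for $\p$. This remark packages together the earlier verifications: that $S_{\mid\h_{\Lambda}}$ is a basis for $\h_{\Lambda}^*$ (the opening lemma of the section), that the non-degeneracy conditions (2)--(4) of Lemma~\ref{enewnon-degeneracy} hold (Lemma~\ref{condDe}), and that $\lvert T\rvert=\ind\,\p$ (Lemma~\ref{TDe}), combined with the extended version of Lemma~\ref{refineregular}/Corollary~\ref{conc} in which Lemma~\ref{non-degeneracy} is replaced by Lemma~\ref{enewnon-degeneracy}.

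Next I would invoke Lemma~\ref{improvboundDe}, which gives that the improved upper bound $\mathcal B'$ coincides with the lower bound $\ch\mathcal{A}$. By~\cite[Lem. 6.11]{J6bis}, recalled at the end of Section~\ref{standardnotation}, this coincidence forces the restriction map $Y(\p)\to R[y+\g_T]$ to be an isomorphism of algebras. In particular $Y(\p)$ is a polynomial algebra over $k$, and $y+\g_T$ is both an algebraic slice (Weierstrass section) in the sense of~\cite[7.6]{J8} and, by~\cite{FJ4}, an affine slice in $\p^*$ for the coadjoint action of the adjoint group of $\p$.

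For the degrees of a set of homogeneous generators of $Y(\p)$, I would apply~\cite[Cor. 2.3]{JS}: the degrees are the eigenvalues of $\ad h$ on an $h$-stable complement of $(\ad \p)y$ in $\p^*$, each augmented by one, and these eigenvalues are computed explicitly in Lemma~\ref{eigenvaluesDe}. Finally, the pure transcendentality of $C(\p^-_{\pi'})$ follows from the polynomiality of $Sy(\p^-_{\pi'})=Y(\p)$, whose generators can be chosen to be semi-invariants, via the argument given in the introduction using~\cite[Thm. 66]{O}.

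All the genuine technical work has already been carried out in the preceding lemmas — in particular, the extensive case analysis identifying each root of $O^m$ and $O^\pm$ as either extended stationary, extended cyclic, or of the form $\tilde{\beta}^i$ for some extended cyclic $\beta$ (Lemma~\ref{condDe}), and the explicit expansion of each element $t(\gamma)$ for $\gamma\in T$ in $\mathbb{Q}S$ (Lemma~\ref{improvboundDe}). Consequently, I do not anticipate any new obstacle at the level of the theorem itself; the statement is essentially a compilation result, the substantive difficulty having been the new non-degeneracy framework (Definitions of extended stationary and cyclic roots and Lemma~\ref{enewnon-degeneracy}) that was needed precisely because the extremal case produces an abundance of roots lying outside $O_1\sqcup O_2$.
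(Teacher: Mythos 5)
Your proposal is correct and follows essentially the same route as the paper, which leaves the proof of Theorem~\ref{conDe} implicit as exactly this compilation: the adapted pair from Remark~\ref{remcond} (resting on the basis lemma, Lemma~\ref{condDe} via Lemma~\ref{enewnon-degeneracy}, and Lemma~\ref{TDe}), the coincidence $\ch\mathcal A=\mathcal B'$ from Lemma~\ref{improvboundDe} combined with the machinery of \cite[Lem. 6.11]{J6bis} recalled at the end of Section~\ref{standardnotation}, the degrees via \cite[Cor. 2.3]{JS} and Lemma~\ref{eigenvaluesDe}, and the transcendentality argument from the introduction. No gaps; nothing further is needed.
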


\section{Type ${\rm E}_7$}\label{E7}

 Let $\mathfrak g$ be of type ${\rm E}_7$ and let $\p=\p^-_{\pi',\,\Lambda}$ be the truncated maximal parabolic subalgebra corresponding to $\pi'=\pi\setminus \{\alpha_3\}$. Let $\beta_1$ be the unique highest root of $\g$ and let $H_{\beta_1}=\{\beta\in \Delta^+\,|\, (\beta,\beta_1)>0\}$ be the maximal Heisenberg set of centre $\beta_1$ in $\Delta^+$. Then notice that the set $\Delta\setminus (H_{\beta_1}\sqcup -H_{\beta_1})$ is a root system of type ${\rm D}_6$ and removing $\alpha_3$ corresponds to removing the extremal root from a system of type ${\rm D}_6$.

Write $(a_1,\, a_2,\, a_3, \,a_4,\, a_5, \,a_6, \,a_7)$ for the root $\sum_{i=1}^7a_i\alpha_i$ (with $a_i$ some integers).

The sets $S$ and $T$ given in Section~\ref{SecDe} for type ${\rm D}_6$ with $s=6$ lead us to taking for $S$ the set

$\begin{array}{lll}
S=\{\beta_1, (0,\,1,\,1,\,2,\,2,\,2,\,1),\, (0,\,1,\,1,\,1,\,1,\,0,\,0),\,(0,\,-1,\,0,\,-1,\,-1,\,0,\,0),\\
 (0,\,0,\,0,\,0,\,-1,\,-1,\,0),\,(0,\,0,\,0,\,0,\,0,\,0,\,-1)\}\end{array}$

and for $T$ the set

$\begin{array}{ll}
T=\{(-1,\,0,\,0,\,0,\,0,\,0,\,0),\, (0,\,0,\,0,\,1,\,1,\,0,\,0),\,(0,\,0,\,1,\,1,\,0,\,0,\,0),\\(0,\,-1,\,0,\,-1,\,-1,\,-1,\,-1),\,
(0,\,0,\,0,\,0,\,0,\,-1,\,0)\}.\end{array}$

More explicitly, we have added to the set $S$ in type ${\rm D}_6$ with $s=6$ (rewritten with respect to the roots in type ${\rm E}_7$) the highest root $\beta_1$, and to the set $T$ in type ${\rm D}_6$ with $s=6$ (rewritten with respect to the roots in type ${\rm E}_7$) we have added the negative root $-\alpha_1$.

For every $\gamma\in S\setminus\{\beta_1\}$, we take the same Heisenberg set $\Gamma_{\gamma}$ (rewritten with respect to the roots in type ${\rm E}_7$) as in type ${\rm D}_6$ with $s=6$ and we add the maximal Heisenberg set $H_{\beta_1}$. Observe that if $\alpha\in H_{\beta_1}$ and $\beta\in\Gamma_{\gamma}$ with $\gamma\in S\setminus\{\beta_1\}$ then one has that $\alpha+\beta\not\in S$.

Hence, by the extremal case in type ${\rm D}_6$ (see the remark~\ref{remcond}), it follows that all conditions of Lemma~\ref{refineregular} hold
for $y=\sum_{\gamma\in S}x_{\gamma}$. Then defining $h\in\h_{\Lambda}$ by $\gamma(h)=-1$ for all $\gamma\in S$,
one obtains that  $(h,\,y)$ is an adapted pair for $\p$.

Finally we show that $Y(\p)$ is polynomial. For this we need to calculate the $\langle {\bf ij}\rangle$-orbits in $\pi$ and the lower and improved upper bounds for $Y(\p)$.
The orbits are the $\Gamma_1=\{\alpha_1\}, \Gamma_2=\{\alpha_3\}, \Gamma_3=\{\alpha_2,\, \alpha_7\}, \Gamma_4=\{\alpha_4,\,\alpha_6\}$ and $\Gamma_5=\{\alpha_5\}$.
For the lower bound, we need to compute $\delta_\Gamma$ for all orbit $\Gamma$.

Let $\{\ep_i\}_{1\le i\le 8}$ be an orthonormal basis of $\mathbb R^8$ according to which the simple roots of $\g$ are expanded as in~\cite[Planche VI]{BOU}.

Recall that the fundamental weights $\varpi'_i$, $1\le i\le 7$, $i\neq 3$, are those for the Levi factor of $\p$.

A direct computation gives :

$\varpi_1'=\frac{1}{4}(\ep_1-\ep_2-\ep_3-\ep_4-\ep_5-\ep_6-\ep_7+\ep_8)$ and $\varpi'_1-\varpi_1=-\frac{1}{2}\varpi_3$,

$\varpi_2'=\frac{1}{6}(5\ep_1+\ep_2+\ep_3+\ep_4+\ep_5+\ep_6)$ and $\varpi'_2-\varpi_2=-\frac{2}{3}\varpi_3$,

$\varpi_4'=\frac{1}{3}(2\ep_1-2\ep_2+\ep_3+\ep_4+\ep_5+\ep_6)$ and $\varpi'_4-\varpi_4=-\frac{4}{3}\varpi_3$,

$\varpi_5'=\frac{1}{2}(\ep_1-\ep_2-\ep_3+\ep_4+\ep_5+\ep_6)$ and $\varpi'_5-\varpi_5=-\varpi_3$,

$\varpi_6'=\frac{1}{3}(\ep_1-\ep_2-\ep_3-\ep_4+2\ep_5+2\ep_6)$ and $\varpi'_6-\varpi_6=-\frac{2}{3}\varpi_3$,

$\varpi_7'=\frac{1}{6}(\ep_1-\ep_2-\ep_3-\ep_4-\ep_5+5\ep_6)$ and $\varpi'_7-\varpi_7=-\frac{1}{3}\varpi_3$.

Thus we get (recall proof of Lemma~\ref{calculboundsB}) :
$\delta_{\Gamma_1}=-2(\varpi_1-\varpi'_1)=-\varpi_3$. Similarly one gets $\delta_{\Gamma_2}=\delta_{\Gamma_3}=\delta_{\Gamma_5}=-2\varpi_3$ and $\delta_{\Gamma_4}=-4\varpi_3$.
Hence the lower bound is $(1-e^{-\varpi_3})^{-1}(1-e^{-2\varpi_3})^{-3}(1-e^{-4\varpi_3})^{-1}$.

Now for the improved upper bound, for each $\gamma\in T$ we will find $t(\gamma)\in \mathbb QS$ such that $\gamma+t(\gamma)$ is a multiple of $\varpi_3$. Denote by $s_i$ the $i$-th element of $S$ as it is written above.

For $\gamma=-\alpha_1$, we have that $t(\gamma)=2s_1$ and $\gamma+t(\gamma)=\varpi_3$.

For $\gamma=\alpha_4+\alpha_5$, we have $t(\gamma)=6s_1+3(s_2+s_3)+2(s_4+s_5)+s_6$ and $\gamma+t(\gamma)=4\varpi_3$.

For $\gamma=\alpha_3+\alpha_4$, we have $t(\gamma)=3s_1+s_2+s_3$ and $\gamma+t(\gamma)=2\varpi_3$.

For $\gamma=-(\alpha_2+\alpha_4+\alpha_5+\alpha_6+\alpha_7)$, we have $t(\gamma)=3s_1+2s_2+s_3+s_5$ and $\gamma+t(\gamma)=2\varpi_3$.

Finally, for $\gamma=-\alpha_6$, we have $t(\gamma)=3s_1+2s_2+s_3+s_4+s_5+s_6$ and $\gamma+t(\gamma)=2\varpi_3$.

We deduce that the lower bound coincides with the improved upper bound. Thus $Y(\p)$ is a polynomial algebra over $k$.

Then one checks that $h=-\alpha_1^\vee-\frac{13}{2}\alpha_2^\vee+3\alpha_4^\vee+\frac{11}{2}\alpha_5^\vee-2\alpha_6^\vee-\frac{1}{2}\alpha_7^\vee$. The eigenvalues of $\ad h$ on the elements of $\g_T$ are respectively: $2,\, 5,\, 7,\, 9,\,17$, hence the degrees of a set of homogeneous generators of $Y(\p)$ are $3,\, 6,\, 8,\, 10,\, 18$.

Thus we obtain the following Theorem.

\begin{theorem}\label{conE7}

Let $\p^-_{\pi'}$ be the maximal parabolic subalgebra of the simple Lie algebra $\g$
of type ${\rm E}_7$ corresponding to $\pi'=\pi\setminus\{\alpha_3\}$. Then the Poisson semicentre
$Sy(\p^-_{\pi'})$ is a polynomial algebra over $k$ in five homogeneous generators, having degrees
$3,\, 6,\, 8,\, 10,\, 18$ respectively, and there exists an affine slice $y+\g_T$ in $(\p^-_{\pi',\,\Lambda})^*$, which is also a Weierstrass section for $Y(\p^-_{\pi',\,\Lambda})$.

\end{theorem}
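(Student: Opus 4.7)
The plan is to assemble the pieces already developed in this section and reduce the proof to routine verifications, exploiting the structural fact that $\Delta\setminus(H_{\beta_1}\sqcup -H_{\beta_1})$ is a root subsystem of type ${\rm D}_6$ in which $\alpha_3$ plays the role of the extremal root. This motivates enlarging the ${\rm D}_6$ data from Section~\ref{SecDe} by adjoining $\beta_1$ to $S$, $H_{\beta_1}$ to the list of Heisenberg sets, and $-\alpha_1$ to $T$. With these choices, $y=\sum_{\gamma\in S}x_\gamma$ and the element $h\in\h_\Lambda$ defined by $\gamma(h)=-1$ for all $\gamma\in S$ form the candidate adapted pair.

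First I would verify that $(h,y)$ is indeed an adapted pair by applying Lemma~\ref{refineregular}. The condition that $S_{\mid\h_\Lambda}$ be a basis for $\h_\Lambda^*$ follows from the analogous statement for ${\rm D}_6$ together with the fact that $\beta_1$ contributes an independent new functional, since $\alpha_1^\vee\in\h_\Lambda$ and $\beta_1$ is the only element of $S$ that does not vanish on $\alpha_1^\vee$. The cardinality condition $\lvert T\rvert=\ind\,\p$ I would check by listing the five $\langle{\bf ij}\rangle$-orbits in $\pi$, namely $\Gamma_1=\{\alpha_1\}$, $\Gamma_2=\{\alpha_3\}$, $\Gamma_3=\{\alpha_2,\alpha_7\}$, $\Gamma_4=\{\alpha_4,\alpha_6\}$, $\Gamma_5=\{\alpha_5\}$, and comparing with the explicit five-element set $T$ above. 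For the non-degeneracy of $\Phi_y$ on $\o\times\o$ I would invoke Lemma~\ref{enewnon-degeneracy}: the key observation is that no sum $\alpha+\beta$ with $\alpha\in H_{\beta_1}\setminus\{\beta_1\}$ and $\beta$ in a Heisenberg set $\Gamma_\gamma$ for $\gamma\in S\setminus\{\beta_1\}$ can lie in $S$, so the matrix of $\Phi_y$ splits block-diagonally into a standard symplectic block on the Heisenberg complement of $\beta_1$, which is obviously non-degenerate, and a block identical to that already settled by Lemma~\ref{condDe} in the extremal ${\rm D}_6$ case.

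Next I would compute both bounds on $\ch Y(\p)$. The lower bound is obtained by enumerating the five $\delta_{\Gamma_i}$ using the formulas of Section~\ref{standardnotation} and the expansions $\varpi'_i-\varpi_i\in\mathbb Q\varpi_3$ already recorded, yielding $\ch\mathcal A=(1-e^{-\varpi_3})^{-1}(1-e^{-2\varpi_3})^{-3}(1-e^{-4\varpi_3})^{-1}$. For the improved upper bound I would, for each $\gamma\in T$, exhibit an explicit $t(\gamma)\in\mathbb QS$ making $\gamma+t(\gamma)$ a positive integer multiple of $\varpi_3$; across the five elements of $T$ these multiples come out as $1,4,2,2,2$, giving $\mathcal B'=\ch\mathcal A$. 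By~\cite[Lem. 6.11]{J6bis} recalled in Section~\ref{standardnotation}, the coincidence $\ch\mathcal A=\mathcal B'$ implies $Y(\p)\simeq R[y+\g_T]$ via restriction, so $Y(\p)$ is polynomial over $k$ and $y+\g_T$ is a Weierstrass, and hence affine, slice in $\p^*$. Finally, the degrees of a homogeneous generating family are obtained from~\cite[Cor. 2.3]{JS} as the $\ad h$-eigenvalues on an $h$-stable basis of $\g_T$, each augmented by one: evaluating the explicit $h$ on the five weights of $T$ yields $2,5,7,9,17$, hence the degrees $3,6,8,10,18$.

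The main obstacle I anticipate is the careful verification of condition (4) of Lemma~\ref{enewnon-degeneracy} for the roots in $H_{\beta_1}\setminus\{\beta_1\}$, which is the genuinely new input compared with the ${\rm D}_6$ analysis. While the intuitive separation between the $H_{\beta_1}$-block and the ${\rm D}_6$-block makes it plausible that the new roots do not interact with $O^m$ through sums in $S$, this has to be verified root by root by examining the Heisenberg structure of $H_{\beta_1}$ inside the larger ${\rm E}_7$ root system.
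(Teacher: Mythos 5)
Your proposal reproduces the paper's own proof in all essentials: the same set $S$ (the extremal ${\rm D}_6$ data with $\beta_1$ adjoined), the same Heisenberg sets (the ${\rm D}_6$ ones together with $H_{\beta_1}$), the same $T$ (the ${\rm D}_6$ set plus $-\alpha_1$), the same key observation that no sum of a root of $H_{\beta_1}$ with a root of a ${\rm D}_6$ Heisenberg set lies in $S$, and the same computations of $\ch\mathcal A$, of $\mathcal B'$ (multiples $1,4,2,2,2$ of $\varpi_3$) and of the $\ad h$-eigenvalues $2,5,7,9,17$ on $\g_T$, giving the degrees $3,6,8,10,18$.

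One detail in your verification of the basis condition is false as stated: it is not true that $\beta_1$ is the only element of $S$ which does not vanish on $\alpha_1^\vee$; for instance the elements $(0,1,1,2,2,2,1)$ and $(0,1,1,1,1,0,0)$ of $S$ take the value $-1$ on $\alpha_1^\vee$, since $\alpha_3(\alpha_1^\vee)=-1$. The correct one-line argument goes the other way round: $\beta_1$ is the highest root of ${\rm E}_7$, hence equals $\varpi_1$ as a weight, so it vanishes on the ${\rm D}_6$ truncated Cartan ${\rm span}\{\alpha_2^\vee,\alpha_4^\vee,\alpha_5^\vee,\alpha_6^\vee,\alpha_7^\vee\}$ while $\beta_1(\alpha_1^\vee)=1$; restricting a vanishing linear combination of $S_{\mid\mathfrak h_{\Lambda}}$ first to that subspace and invoking the ${\rm D}_6$ lemma kills the five ${\rm D}_6$ coefficients, and evaluation at $\alpha_1^\vee$ then kills the coefficient of $\beta_1$. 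Finally, the difficulty you anticipate with condition (4) of Lemma~\ref{enewnon-degeneracy} for the roots of $H_{\beta_1}\setminus\{\beta_1\}$ does not arise: once the non-interaction observation is checked, every such root $\alpha$ lies in $O_1$ (two roots of $H_{\beta_1}$ sum to a root non-orthogonal to $\beta_1$, hence among elements of $S$ only possibly to $\beta_1$ itself), so the corresponding block of $\Phi_y$ is a standard non-degenerate Heisenberg block, the off-diagonal blocks vanish, and the remaining verification reduces, exactly as in the paper, to the already established extremal ${\rm D}_6$ case (Lemma~\ref{condDe}, Remark~\ref{remcond}); no stationary or cyclic analysis is needed for the new block.
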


\section{Type ${\rm E}_6$.}\label{secE6}

Recall that the numbering of simple roots follows \cite[Planche V]{BOU}. 
In type ${\rm E}_6$ we know that the Poisson centre of the  truncated maximal parabolic subalgebra associated to $\pi'=\pi\setminus \{\alpha_s\}$ is polynomial for $s=3,\,4,\,5$ by~\cite{FJ2} (since both bounds $\ch\mathcal A$  and $\ch\mathcal B$ coincide), resp. for $s=2$ by~\cite{PPY} and an adapted pair was constructed in~\cite{FL}, resp.  in~\cite{J6bis}. It remains to examine the cases $s=1,\,6$, and by symmetry we may just assume that $s=6$. 
In the latter case, we have that $\p^-_{\pi',\,\Lambda}=\g'\ltimes\m$, where $\g'$ is the Levi factor of $\p^-_{\pi'}$ (of type ${\rm D}_5$), and $\m$ is the nilradical of $\p^-_{\pi'}$, which is an abelian ideal of $\g'$, isomorphic to the half-spin representation of $\s\o_{10}$. Moreover the group ${\rm Spin}_{10}$ acts on $\m$ with a dense open orbit, which has no divisors in the complement, and the stabiliser of an element in this orbit is $Q={\rm Spin}_7\ltimes {\rm exp}\,(k^8)$ (see~\cite[Summary Table]{PV}). By~\cite[Prop. 3.10]{Y1}, one has that the algebra of invariants $S(\q)^Q$ (with $\q={\rm Lie}\,Q$) is a polynomial ring in three generators and the general theory of~\cite{Y1} asserts that $Y(\p^-_{\pi',\,\Lambda})$ is also polynomial in the same number of generators (but the degrees were not known).

Here we give an adapted pair for $\p^-_{\pi',\,\Lambda}$ and show that for this pair, the improved upper bound $\mathcal B'$ coincides with the lower bound $\ch\mathcal A$. We also compute the degrees of the three generators of the polynomial algebra $Y(\p^-_{\pi',\,\Lambda})$.


Recall some notations and hypotheses.
Consider $S\subset\Delta^+\sqcup\Delta^-_{\pi'}$, and for all $\gamma\in S$, let $\Gamma_{\gamma}\subset\Delta^+\sqcup\Delta^-_{\pi'}$ be a Heisenberg set.  Suppose that all the sets $\Gamma_{\gamma}$'s
are disjoint and set $\Gamma=\bigsqcup_{\gamma\in S}\Gamma_{\gamma}$. Set, for all $\gamma\in S$, $\Gamma_{\gamma}^0=\Gamma_{\gamma}\setminus\{\gamma\}$ and $O=\bigsqcup_{\gamma\in S}\Gamma_{\gamma}^0$. Set $\o=\g_{-O}$ (notation of Section~\ref{standardnotation}) and $y=\sum_{\gamma\in S}c_{\gamma}x_{\gamma}$ where all the $c_{\gamma}$'s are nonzero scalars. Denote by $\Phi_y$ the skew-symmetric bilinear form on $\g$ such that, for all $x,\,x'\in\g$, $\Phi_y(x,\,x')=K(y,\,[x,\,x'])$, where $K$ is the Killing form.

Instead of Lemma~\ref{refineregular}, we will use the following Lemma:

\begin{lemma}\label{regularE}

Assume further that

\begin{enumerate}
\item[(i)] There exist disjoint subsets $T^*$ and $T$ of $\Delta^+\sqcup\Delta^-_{\pi'}$, also disjoint from $\Gamma$, such that $\Delta^+\sqcup\Delta^-_{\pi'}=\Gamma\sqcup T^*\sqcup T$. 
\item[(ii)] The restriction of $\Phi_y$ to $\o\times\o$ is non-degenerate.
\item[(iii)] $S_{\mid\h_{\Lambda}}$ is a basis for $\h_{\Lambda}^*$.
\item[(iv)] For all $\beta\in T^*$, $x_{\beta}\in(\ad \p_{\pi',\,\Lambda}^-)\,y+\g_T$.
\item[(v)] $\lvert T\rvert=\ind\p_{\pi',\,\Lambda}$.
\end{enumerate}
Then $\p_{\pi',\,\Lambda}=(\ad \p_{\pi',\,\Lambda}^-)\,y\oplus\g_T$, where $\ad$ denotes the coadjoint action. In particular, $y$ is regular in $\p_{\pi',\,\Lambda}$. Moreover, if we uniquely define $h\in \h_{\Lambda}$ by the relations $\gamma(h)=-1$ for all $\gamma\in S$, then $(h,\,y)$ is an adapted pair for $\p_{\pi',\,\Lambda}^-$.
\end{lemma}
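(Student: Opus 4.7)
The plan is to adapt the proof of Lemma~\ref{refineregular}, using hypothesis (iv) in place of the role that would be played by enlarging $T$ to $T\sqcup T^*$. First, by (i), I write the vector space decomposition $\p_{\pi',\,\Lambda}=\g_\Gamma\oplus\g_{T^*}\oplus\g_T\oplus\h_\Lambda$, so the assertion $\p_{\pi',\,\Lambda}=(\ad\p^-_{\pi',\,\Lambda})y+\g_T$ (as a sum, not yet direct) reduces to showing that each of $\g_\Gamma$, $\g_{T^*}$ and $\h_\Lambda$ lies in $(\ad\p^-_{\pi',\,\Lambda})y+\g_T$. The summand $\g_{T^*}$ is handled at once by (iv). For the $\g_S$-part of $\g_\Gamma$, I use (iii): since $[h,y]=\sum_{\gamma\in S}c_\gamma\gamma(h)x_\gamma$ and $S_{\mid\h_\Lambda}$ is a basis of $\h_\Lambda^*$, the linear map $h\mapsto[h,y]$ is a bijection $\h_\Lambda\to\g_S$, whence $\g_S=(\ad\h_\Lambda)y$.

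The heart of the argument, identical in spirit to Lemma~\ref{refineregular}, is to show $\g_O\subseteq(\ad\p^-_{\pi',\,\Lambda})y+\g_T+\g_{T^*}+\g_S$ by exploiting (ii). For every $\alpha\in O$ one has $-\alpha\in\Delta^-\sqcup\Delta^+_{\pi'}$, so $x_{-\alpha}\in\p^-_{\pi',\,\Lambda}$ and $[x_{-\alpha},y]\in(\ad\p^-_{\pi',\,\Lambda})y$. Expanding $[x_{-\alpha},y]=\sum_{\gamma\in S}c_\gamma[x_{-\alpha},x_\gamma]$, the component in $\g_\beta$ for $\beta\in O$ is nonzero only when $\alpha+\beta\in S$, and its coefficient agrees up to a nonzero scalar with the $(\alpha,\beta)$-th entry of the matrix of $\Phi_y$ on $\o\times\o$; the non-degeneracy (ii) of this matrix therefore forces the $\g_O$-projections of $(\ad x_{-\alpha})y$, as $\alpha$ runs over $O$, to span $\g_O$ modulo $\g_S\oplus\g_T\oplus\g_{T^*}$. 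Applying (iv) to absorb $\g_{T^*}$ and the first paragraph to absorb $\g_S$ yields $\g_O\subseteq(\ad\p^-_{\pi',\,\Lambda})y+\g_T$. To recover $\h_\Lambda$, I bracket $x_{-\gamma}\in\p^-_{\pi',\,\Lambda}$ (for $\gamma\in S$) with $y$: the $\h$-component of $[x_{-\gamma},y]$ is a nonzero multiple of the element $t_\gamma\in\h$ with $K(t_\gamma,\cdot)=\gamma(\cdot)$, whose projection to $\h_\Lambda$ is dual to $\gamma_{\mid\h_\Lambda}$. By (iii) these projections form a basis of $\h_\Lambda$ as $\gamma$ varies over $S$, so $\h_\Lambda\subseteq(\ad\p^-_{\pi',\,\Lambda})y+\g_\Gamma+\g_{T^*}+\g_T\subseteq(\ad\p^-_{\pi',\,\Lambda})y+\g_T$. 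Assembled, $\p_{\pi',\,\Lambda}=(\ad\p^-_{\pi',\,\Lambda})y+\g_T$.

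A standard dimension count then completes the proof: the general inequality $\dim(\ad\p^-_{\pi',\,\Lambda})y\le\dim\p^-_{\pi',\,\Lambda}-\ind\p^-_{\pi',\,\Lambda}$ combined with (v), i.e. $\lvert T\rvert=\ind\p_{\pi',\,\Lambda}$, forces the surjection to be direct. Hence $\p_{\pi',\,\Lambda}=(\ad\p^-_{\pi',\,\Lambda})y\oplus\g_T$ and $y$ is regular in $\p_{\pi',\,\Lambda}$. Finally, the defining relations $\gamma(h)=-1$ for $\gamma\in S$ give $(\ad h)y=\sum_{\gamma\in S}c_\gamma\gamma(h)x_\gamma=-y$ at once, so $(h,y)$ is an adapted pair for $\p^-_{\pi',\,\Lambda}$.

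The only genuinely delicate step is the middle paragraph, namely the passage from the non-degeneracy of $\Phi_y$ on $\o\times\o$ to a quantitative control of the $\g_O$-components of $(\ad\g_{-O})y$. This is exactly the linear-algebra step carried out in~\cite[Lem.~6]{FL}; the one new feature of the present lemma is the systematic use of (iv) to absorb the contributions in $\g_{T^*}$, which spares us from having to enlarge $T$ beyond the size $\ind\p_{\pi',\,\Lambda}$.
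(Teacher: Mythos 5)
Your proposal is correct and follows essentially the same route as the paper's proof: decompose $\p_{\pi',\Lambda}$ via (i), obtain $\g_S=(\ad\h_\Lambda)y$ and recover $\h_\Lambda$ from (iii), use the non-degeneracy (ii) to capture $\g_O$ modulo the other summands, absorb $\g_{T^*}$ by (iv), and conclude directness from (v) by the index/codimension inequality, with $(\ad h)y=-y$ immediate from the definition of $h$. You merely spell out the linear-algebra details (the pairing argument from \cite[Lem.~6]{FL}) that the paper leaves implicit.
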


\begin{proof}
Condition (i) implies that $\p^-_{\pi',\,\Lambda}=\h_{\Lambda}\oplus\o\oplus\g_{-S}\oplus\g_{-T^*}\oplus\g_{-T}$ and that $\p_{\pi',\,\Lambda}=\h_{\Lambda}\oplus\g_O\oplus\g_{S}\oplus\g_{T^*}\oplus\g_{T}$.
Condition (ii) implies that $\g_O\subset(\ad\o)y+\g_S+\g_T+\g_{T^*}$ since $O\cap S=\emptyset$.
Condition (iii) implies that $\g_S=(\ad\h_{\Lambda})y$ and that $\h_{\Lambda}\subset(\ad\g_{-S})y+\g_O+\g_S+\g_T+\g_{T^*}$.
Condition (iv) implies that $\g_{T^*}\subset(\ad \p_{\pi',\,\Lambda}^-)\,y+\g_T$.
Hence $\p_{\pi',\,\Lambda}=\h_{\Lambda}\oplus\g_O\oplus\g_{S}\oplus\g_{T^*}\oplus\g_{T}\subset(\ad \p_{\pi',\,\Lambda}^-)\,y+\g_T$. Finally condition (v) implies that the latter sum is direct, since $\dim\g_T=\ind\p_{\pi',\,\Lambda}\le{\rm codim}\,(\ad \p_{\pi',\,\Lambda}^-)\,y$.
\end{proof}

Recall the strongly orthogonal positive roots $\beta_1$, $\beta_2$, $\beta_3$, $\beta_4$ of the Kostant cascade for $\Delta^+$ (see~\cite[Table I]{J1} or~\cite[Table I]{FL}) and $\beta'_1$, $\beta'_2$, $\beta'_3$ and $\beta'_{1'}$ for $\Delta^+_{\pi'}=-\Delta^-_{\pi'}$ (see Section~\ref{SecD}).

We choose for $S$ the set $S=\{\beta_1,\, \beta_2, \,\beta_3, \,-\beta_1', \,-\beta_2'+\alpha_2\}$ or in terms of simple roots, by writing as $(a_1,\,a_2,\,a_3,\,a_4,\,a_5,\,a_6)$ the root $\sum_{i=1}^6a_i\alpha_i$, our chosen set $S$ is the set
$\begin{array}{ll}
S=&\{(1,\,2,\,2,\,3,\,2,\,1),\, (1,\,0,\,1,\,1,\,1,\,1),\, (0,\,0,\,1,\,1,\,1,\,0), (-1,\,-1,\,-2,\,-2,\,-1,\,0),\\
& (0,\,0,\,0,\,-1,\,-1,\,0)\}.\end{array}$

We easily check  that $S_{\mid\h_{\Lambda}}$ is a basis for $\h_{\Lambda}^*$, hence condition (iii) of Lemma~\ref{regularE} is satisfied.

Set $\Gamma_{\beta_1}=H_{\beta_1}\setminus\{(0,\,1,\,1,\,1,\,0,\,0),\,(1,\,1,\,1,\,2,\,2,\,1)\}$, where $H_{\beta_1}$ is the maximal Heisenberg set in $\Delta^+$ defined in Example~\ref{MaxHeisenbergset}.

Set $\Gamma_{\beta_2}=H_{\beta_2}\setminus\{(1,\,0,\,1,\,1,\,1,\,0),\,(0,\,0,\,0,\,0,\,0,\,1)\}$ and $\Gamma_{\beta_3}=H_{\beta_3}$.
Set $\Gamma_{-\beta'_1}=-H_{\beta'_1}$ and $\Gamma_{-\beta'_2+\alpha_2}=\{-\beta'_2+\alpha_2,\,-\alpha_4,\,-\alpha_5\}$.
We easily check that all these sets are disjoint Heisenberg sets.

Now set $T^*=\{(1,\,1,\,1,\,2,\,2,\,1),\,(1,\,0,\,1,\,1,\,1,\,0),\,-\alpha_1,\,-\alpha_2,\,-(\alpha_2+\alpha_4),\,-(\alpha_2+\alpha_4+\alpha_5)\}$ and $T=\{\alpha_4,\, \alpha_6,\, \alpha_2+\alpha_3+\alpha_4\}$.

The $\langle {\bf ij}\rangle$-orbits in $\pi$ are $\Gamma_1:=\{\alpha_1,\, \alpha_6\}, \,\Gamma_2:=\{\alpha_2,\,\alpha_3,\,\alpha_5\}$ and $\Gamma_3:=\{\alpha_4\}$, hence condition (v) of Lemma~\ref{regularE} is satisfied.

By~\cite[Lemma 2.2]{J1} or~\cite[Lemma 3 (2)]{FL}, one has that $\Delta^+=\bigsqcup_{i=1}^4 H_{\beta_i}$. Moreover $H_{\beta_4}=\{\beta_4=\alpha_4\}$. Hence one has that $\Delta^+=\Gamma_{\beta_1}\sqcup\Gamma_{\beta_2}\sqcup\Gamma_{\beta_3}\sqcup(T^*\cap\Delta^+)\sqcup T$.

Similarly one has that $\Delta^+_{\pi'}=H_{\beta'_1}\sqcup H_{\beta'_2}\sqcup H_{\beta'_3}\sqcup H_{\beta'_{1'}}$.

Moreover $H_{\beta'_2}=\{\beta'_2,\,\alpha_2,\,\alpha_4+\alpha_5,\,\alpha_2+\alpha_4,\,\alpha_5\}$, $H_{\beta'_3}=\{\beta'_3=\alpha_4\}$ and $H_{\beta'_{1'}}=\{\beta'_{1'}=\alpha_1\}$.
Hence one has that $\Delta^-_{\pi'}=\Gamma_{-\beta'_1}\sqcup\Gamma_{-\beta'_2+\alpha_2}\sqcup(T^*\cap\Delta^-_{\pi'})$. Thus condition (i) of Lemma~\ref{regularE} is satisfied.

To prove condition (ii), it suffices to prove Lemma~\ref{non-degeneracy}, noting that $S^+=\{\beta_1,\,\beta_2,\,\beta_3\}$, $S^-=\{-\beta'_1,\,-\beta'_2+\alpha_2\}$ and $S^m=\emptyset$.
Using~\cite[Lemma 3 (5)]{FL}, condition (2) and (3) of Lemma~\ref{non-degeneracy} follow directly
and condition (4) is empty. Hence condition (ii).

It remains to prove condition (iv). By rescaling the nonzero root vectors $x_{\alpha}$, $\alpha\in\Delta$, one has that 

$x_{(1,\,1,\,1,\,2,\,2,\,1)}=(\ad(x_{(0,\,-1,\,-1,\,-1,\,0,\,0)}+x_{(1,\,0,\,1,\,0,\,0,\,0)}))y$; 

$x_{(1,\,0,\,1,\,1,\,1,\,0)}=(\ad x_{\alpha_1})y$;

$x_{-\alpha_1}=(\ad x_{(-1,\,0,\,-1,\,-1,\,-1,\,0)})y+x_{\alpha_6}$; 

$x_{-(\alpha_2+\alpha_4+\alpha_5)}=(\ad x_{(-1,\,-1,\,-1,\,-2,\,-2,\,-1)})y+x_{\alpha_2+\alpha_3+\alpha_4}$. 

Hence the roots $(1,\,1,\,1,\,2,\,2,\,1),\,(1,\,0,\,1,\,1,\,1,\,0),\,-\alpha_1,\,-(\alpha_2+\alpha_4+\alpha_5)$ satisfy condition (iv).

Finally one has that

$(\ad x_{(0,\,-1,\,-1,\,-2,\,-1,\,0)})y=x_{(1,\,1,\,1,\,1,\,1,\,1)}+x_{(0,\,-1,\,0,\,-1,\,0,\,0)}$

$(\ad x_{(0,\,1,\,0,\,0,\,0,\,0)})y=x_{(1,\,1,\,1,\,1,\,1,\,1)}+x_{(0,\,1,\,1,\,1,\,1,\,0)}$

$(\ad x_{(-1,\,-1,\,-1,\,-2,\,-1,\,-1)})y=x_{(0,\,1,\,1,\,1,\,1,\,0)}+x_{(0,\,-1,\,0,\,-1,\,0,\,0)}$.

Hence the root $-(\alpha_2+\alpha_4)$ satisfies condition (iv).

Similarly one has that

$(\ad x_{(0,\,-1,\,-1,\,-1,\,-1,\,0)})y=x_{(1,\,1,\,1,\,2,\,1,\,1)}+x_{(0,\,-1,\,0,\,0,\,0,\,0)}$

$(\ad x_{(-1,\,-1,\,-1,\,-1,\,-1,\,-1)})y=x_{(0,\,1,\,1,\,2,\,1,\,0)}+x_{(0,\,-1,\,0,\,0,\,0,\,0)}$

$(\ad x_{(0,\,1,\,0,\,1,\,0,\,0)})y=x_{(1,\,1,\,1,\,2,\,1,\,1)}+x_{(0,\,1,\,1,\,2,\,1,\,0)}$.

Hence the root $-\alpha_2$ satisfies condition (iv).

All conditions of Lemma~\ref{regularE} are satisfied. Thus we obtain an adapted pair $(h,\,y)$ for $\p^-_{\pi',\,\Lambda}$.

Now we compute the lower bound $\ch\mathcal A$ and the improved upper bound $\mathcal B'$
for $\ch Y(\p^-_{\pi',\,\Lambda})$.
 Note that $\pi'$ is of type ${\rm D}_5$ but we need to pay attention at the numbering of simple roots, which is different from the usual for ${\rm D}_5$. Denote by $\{\ep_i\}_{i=1}^8$ an orthonormal basis of $\mathbb R^8$ according to which the roots of ${\rm E}_6$ are expanded as in~\cite[Planche V]{BOU}. Recall that the fundamental weights $\varpi_i'$, $i\in \{1, \dots, 5\}$, are those of the Levi factor of $\p$. We have:
$\varpi_1'=\frac{1}{2}(\ep_8-\ep_7-\ep_5-\ep_6)$ and $\varpi'_1-\varpi_1=-\frac{1}{2}\varpi_6$,

$\varpi_2'=\frac{1}{2}(\ep_1+\ep_2+\ep_3+\ep_4)-\frac{1}{4}(\ep_5+\ep_6+\ep_7-\ep_8)$ and $\varpi'_2-\varpi_2=-\frac{3}{4}\varpi_6$,

$\varpi_3'=\frac{1}{2}(-\ep_1+\ep_2+\ep_3+\ep_4-\ep_5-\ep_6-\ep_7+\ep_8)$ and $\varpi'_3-\varpi_3=-\varpi_6$,

$\varpi_4'=\ep_3+\ep_4-\frac{1}{2}(\ep_5+\ep_6+\ep_7-\ep_8)$ and $\varpi'_4-\varpi_4=-\frac{3}{2}\varpi_6$,

$\varpi_5'=\ep_4-\frac{1}{4}(\ep_5+\ep_6+\ep_7-\ep_8)$ and $\varpi'_5-\varpi_5=-\frac{5}{4}\varpi_6$.

We may compute $\delta_\Gamma$, for each orbit $\Gamma$.
We have $\delta_{\Gamma_1}=-2(\varpi_1+\varpi_6-\varpi_1')=-3\varpi_6$,

$\delta_{\Gamma_2}=-2(\varpi_2+\varpi_3+\varpi_5-\varpi_2'-\varpi_3'-\varpi_5')=-6\varpi_6$,

and $\delta_{\Gamma_3}=-2(\varpi_4-\varpi_4')=-3\varpi_6.$

Hence the lower bound for $\ch Y(\p)$ is equal to $(1-e^{-3\varpi_6})^{-2}(1-e^{-6\varpi_6})^{-1}\le \ch Y(\p)$.

We now compute the improved upper bound; recall that for every $\gamma\in T$ we need to compute the unique element $t(\gamma)\in \mathbb QS$ such that $\gamma+t(\gamma)$ is a multiple of $\varpi_6$.

For $\gamma=\alpha_4$, one has $t(\gamma)=5\beta_1+3\beta_2+3\beta_3+2(-\beta_2'+\alpha_2)+4(-\beta_1')$ and $\gamma+t(\gamma)=6\varpi_6$.

For $\gamma=\alpha_6$, one has $t(\gamma)=2\beta_1+\beta_2+\beta_3+(-\beta_1')$ and $\gamma+t(\gamma)=3\varpi_6$.
For $\gamma=\alpha_2+\alpha_3+\alpha_4$, one has $t(\gamma)=2\beta_1+2\beta_2+\beta_3+2(-\beta_1')$ and $\gamma+t(\gamma)=3\varpi_6$.

Hence the improved upper bound coincides with the lower bound and $Y(\p^-_{\pi',\,\Lambda})$ is a polynomial algebra over $k$. Note that the element $h\in\h_{\Lambda}$ such that $\gamma(h)=-1$ for all $\gamma\in S$ is $h=-2\alpha_1^\vee-\alpha_2^\vee+\alpha_3^\vee+6\alpha_4^\vee-5\alpha_5^\vee$. Then the eigenvalues of $\ad h$ on the elements of $\g_T$ are $5,\,7$ and $17$, hence the degrees of a set of homogeneous generators for $Y(\p)$ are $6,\, 8$ and $18$.

Now we can give the following Theorem.

\begin{theorem}\label{conE6}

Let $\p^-_{\pi'}$ be the maximal parabolic subalgebra of the simple Lie algebra $\g$
of type ${\rm E}_6$ corresponding to $\pi'=\pi\setminus\{\alpha_6\}$. Then the Poisson semicentre
$Sy(\p^-_{\pi'})$ is a polynomial algebra over $k$ in three homogeneous generators, having degrees
$6,\, 8$ and $18$ respectively, and there exists an affine slice $y+\g_T$ in $(\p^-_{\pi',\,\Lambda})^*$, which is also a Weierstrass section for $Y(\p^-_{\pi',\,\Lambda})$.

\end{theorem}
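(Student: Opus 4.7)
The plan is to apply Lemma~\ref{regularE} with the explicit data spelled out just before the theorem statement: the five-element set $S=\{\beta_1,\beta_2,\beta_3,-\beta'_1,-\beta'_2+\alpha_2\}$, the Heisenberg sets $\Gamma_\gamma$ obtained by trimming the maximal Heisenberg sets $H_{\beta_i}$ in $\Delta^+$ and $-H_{\beta'_j}$ in $\Delta^-_{\pi'}$, and the sets $T^*$ and $T$ with $\lvert T\rvert=3$. Once the hypotheses of Lemma~\ref{regularE} are verified, the element $y=\sum_{\gamma\in S}x_\gamma$ together with the unique $h\in\h_\Lambda$ satisfying $\gamma(h)=-1$ for all $\gamma\in S$ will form an adapted pair for $\p^-_{\pi',\,\Lambda}$, from which Theorem~\ref{conE6} will follow in the standard way.

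Concretely, I would verify the five conditions of Lemma~\ref{regularE} in the following order. First, condition (iii), that $S_{\mid \h_\Lambda}$ is a basis of $\h_\Lambda^*$, is a direct $5\times 5$ determinant check. Second, condition (v) reduces to counting the $\langle\mathbf{ij}\rangle$-orbits in $\pi$, which are $\{\alpha_1,\alpha_6\}$, $\{\alpha_2,\alpha_3,\alpha_5\}$ and $\{\alpha_4\}$, so $\lvert T\rvert=3=\ind\p^-_{\pi',\,\Lambda}$. Third, condition (i) (the disjoint decomposition $\Delta^+\sqcup\Delta^-_{\pi'}=\Gamma\sqcup T^*\sqcup T$) follows from the standard decomposition $\Delta^+=\bigsqcup_{i=1}^4 H_{\beta_i}$ (and its analogue for $\Delta^+_{\pi'}$), combined with the fact that the extracted roots $H_{\beta_4}$ and $H_{\beta'_3}, H_{\beta'_{1'}}$ are singletons that fall into $T$ or $T^*$. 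Fourth, for condition (ii) I would invoke Lemma~\ref{non-degeneracy}: here $S^m=\emptyset$, so only conditions (2) and (3) are needed, and they follow immediately from~\cite[Lemma 3 (5)]{FL} since $\Gamma_{\beta_i}^0\subset H_{\beta_i}$ and similarly on the negative side.

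The main obstacle is condition (iv), which requires producing, for each $\beta\in T^*$, an explicit preimage showing $x_\beta\in(\ad\p^-_{\pi',\,\Lambda})y+\g_T$. The six roots in $T^*$ split into two types: for $(1,1,1,2,2,1)$, $(1,0,1,1,1,0)$, $-\alpha_1$ and $-(\alpha_2+\alpha_4+\alpha_5)$ a single coadjoint bracket $(\ad x_{-\delta})y$ with an appropriate $\delta$ either gives $x_\beta$ outright or gives $x_\beta$ plus an element of $\g_T$; for the more delicate roots $-\alpha_2$ and $-(\alpha_2+\alpha_4)$ one needs to combine three brackets of the form $(\ad x_{-\delta_j})y$ whose off-diagonal contributions cancel in pairs, leaving a scalar multiple of $x_\beta$. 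This is the step that requires the most care, since the cancellations depend on the structure constants and on rescaling the chosen root vectors $x_\alpha$, and it is precisely where the adapted-pair construction for $\pi'=\pi\setminus\{\alpha_6\}$ differs qualitatively from the simpler cases $s=3,4,5$ treated in~\cite{FL}.

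Once Lemma~\ref{regularE} yields the adapted pair $(h,y)$, polynomiality and the degrees follow from the computation done above the statement: one compares the lower bound $\ch\mathcal A=(1-e^{-3\varpi_6})^{-2}(1-e^{-6\varpi_6})^{-1}$ (obtained from $\delta_{\Gamma_1}=\delta_{\Gamma_3}=-3\varpi_6$, $\delta_{\Gamma_2}=-6\varpi_6$) with the improved upper bound $\mathcal B'=\prod_{\gamma\in T}(1-e^{-(\gamma+t(\gamma))})^{-1}$ using the explicit expressions of $t(\gamma)\in\mathbb Q S$ for $\gamma\in\{\alpha_4,\alpha_6,\alpha_2+\alpha_3+\alpha_4\}$ listed above, which give exponents $-6\varpi_6,-3\varpi_6,-3\varpi_6$ respectively. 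Coincidence of $\ch\mathcal A$ and $\mathcal B'$ implies by~\cite[Lem. 6.11]{J6bis} that restriction gives an isomorphism $Y(\p^-_{\pi',\,\Lambda})\simeq R[y+\g_T]$, hence $Y(\p^-_{\pi',\,\Lambda})$ is polynomial in three generators and $y+\g_T$ is a Weierstrass section (and, by~\cite{FJ4}, an affine slice). Finally, by~\cite[Cor. 2.3]{JS} the degrees of the homogeneous generators equal the eigenvalues of $\ad h$ on $\g_T$ augmented by one; computing $h=-2\alpha_1^\vee-\alpha_2^\vee+\alpha_3^\vee+6\alpha_4^\vee-5\alpha_5^\vee$ and evaluating on the three weights of $T$ gives eigenvalues $5,7,17$, hence degrees $6,8,18$, as claimed.
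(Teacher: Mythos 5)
Your proposal is correct and follows essentially the same route as the paper: it verifies the five conditions of Lemma~\ref{regularE} for the same data $S$, $\Gamma_\gamma$, $T^*$, $T$ (using Lemma~\ref{non-degeneracy} with $S^m=\emptyset$ for the non-degeneracy condition and explicit bracket combinations for condition (iv)), then deduces polynomiality from the coincidence of $\ch\mathcal A$ and the improved upper bound $\mathcal B'$, and the degrees $6,8,18$ from the eigenvalues $5,7,17$ of $\ad h$ on $\g_T$ via~\cite[Cor. 2.3]{JS}. This matches the paper's own argument in all essentials.
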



\begin{thebibliography}{99}

\bibitem{BGR} Borho, W., and P. Gabriel, and R. Rentschler, 
 ``Primideale in Einh\" ullenden aufl\"osbarer Lie-Algebren (Beschreibung durch Bahnenr\"aume)'',
 Lecture Notes in Math., {\bf 357}, Springer-Verlag, Berlin, 1973.



\bibitem{BOU} Bourbaki, N.,
{``Groupes et alg\`ebres de Lie, Chapitres IV-VI''},
 Hermann, Paris, 1968.


\bibitem{D1} Dixmier, J.,
{\em Sur le centre de l'alg\`ebre enveloppante d'une alg\`ebre de Lie}, 
C.R. Acad. Sc. Paris, {\bf 265} (1967), 408--410.

\bibitem{D} Dixmier, J.,
{``Alg\`ebres enveloppantes''},
 Editions Jacques Gabay, les grands classiques Gauthier-Villars, Paris/Bruxelles/Montr\'eal, 1974.



\bibitem{F} Fauquant-Millet, F.,
 {\em Sur la polynomialit\'e de certaines alg\`ebres d'invariants d'alg\`ebres de Lie}, M\'emoire d'Habilitation \`a Diriger des Recherches,\,http://tel.archives-ouvertes.fr/tel-00994655.

\bibitem{FJ1} Fauquant-Millet, F., and A. Joseph,
{\em Sur les semi-invariants d'une sous-alg\`ebre parabolique d'une alg\`ebre enveloppante quantifi\'ee},
 Transformation Groups, {\bf 6} (2001), no. 2, 125--142.



\bibitem{FJ2} Fauquant-Millet, F., and A. Joseph, 
 {\em Semi-centre de l'alg\`ebre enveloppante d'une sous-alg\`ebre parabolique d'une
alg\`ebre de Lie semi-simple}, 
 Ann. Sci. \'Ecole. Norm. Sup. (4)
{\bf{38}} (2005), no. 2, 155--191.

\bibitem{FJ3} Fauquant-Millet, F., and A. Joseph, 
 {\em La somme des faux degr\'es - un
myst\`ere en th\'eorie des invariants}, 
 Advances in Maths {\bf 217} (2008), 1476--1520.


\bibitem{FJ4} Fauquant-Millet, F., and A. Joseph, 
 {\em Adapted pairs and Weierstrass sections}, 
 http://arxiv.org/abs/1503.02523.

\bibitem{FL} Fauquant-Millet, F., and P. Lamprou, 
 {\em Slices for maximal parabolic subalgebras of a semisimple Lie algebra},
 Transformation Groups (2016), DOI : 10.1007/S00031-016-9366-9.


\bibitem{H} Heckenberger, I.,
 {\em On the semi-centre of $U(\liep)$ for parabolic subalgebras $\liep$ of $\mathfrak{so}_7$ and $\mathfrak{so}_9$}, 
 unpublished notes.

\bibitem{J1} Joseph, A.,
 {\em A preparation theorem for the prime spectrum of a semisimple Lie
algebra}, 
J. Algebra {\bf{48}} (1977), 241--289.



\bibitem{J6} Joseph, A.,
{\em On semi - invariants and index for biparabolic (seaweed) algebras
I}, 
J. Algebra {\bf 305} (2006), no. 1, 487--515.

\bibitem{J7} Joseph, A.,
 {\em On semi - invariants and index for biparabolic (seaweed) algebras
II},  
 J. Algebra {\bf 312} (2007), no. 1, 158--193.



\bibitem{JL} Joseph, A., and P. Lamprou, 
 {\em Maximal Poisson commutative subalgebras for truncated parabolic subalgebras of maximal
index in $\mathfrak{sl}_n$},
 Transform. Groups {\bf 12} (2007), no. 3, 549--571.



\bibitem{J5} Joseph, A.,
 {\em Slices for biparabolic coadjoint actions in type
$A$},  
 J. Algebra {\bf 319} (2008), no. 12, 5060--5100.

\bibitem{J6bis} Joseph, A.,
 {\em Compatible adapted pairs and a common slice theorem for some centralizers}, Transform. Groups {\bf 13} (2008), no. 3-4, 637--669.

\bibitem{J8} Joseph, A.,
 {\em An algebraic slice in the coadjoint space of the Borel and the Coxeter element}, Advances in Maths {\bf 227} (2011), 522--585.

\bibitem{JS} Joseph, A., and D. Shafrir, 
 {\em Polynomiality of invariants, unimodularity and adapted pairs},
 Transform. Groups {\bf 15} (2010), no. 4, 851--882.

\bibitem{K} Kostant, B.,
 {\em Lie group representations on polynomial
rings}, 
 Amer. J. Math. {\bf{85}} (1963), 327--404.


\bibitem{O} Ooms, A.,
 {\em The polynomiality of the Poisson center and semi-center of a Lie algebra and Dixmier's fourth problem},
 to appear in Journal of Algebra.
 
 

\bibitem{PPY} Panyushev, D., and A. Premet, and O. Yakimova, 
 {\em On symmetric invariants of centralisers in reductive Lie algebras}, 
 J. Algebra {\bf 313} (2007), no. 1, 343--391.
 
 

\bibitem{PPY1} Panyushev, D., and O. Yakimova,
 {\em A remarkable contraction of semisimple Lie algebras}, 
 Ann. Inst. Fourier (Grenoble) {\bf 62} (2012), no. 6, 2053--2068 (2013).

\bibitem{PPY2} Panyushev, D., and O. Yakimova,
 {\em Parabolic contractions of semisimple Lie algebras and their invariants}, 
 Selecta Math. (N.S.) {\bf 19} (2013), no. 3, 699--717.

\bibitem{PPY3} Panyushev, D., and O. Yakimova, 
{\em Symmetric invariants related to representations of exceptional simple groups}, arXiv : 1609.01914.

\bibitem{PV} Popov V.L., and E.B. Vinberg, {\em Invariant theory}, In:
``Algebraic Geometry IV'' (Encyclopedia Math. Sci., {\bf vol. 55}, 123--284), Berlin, Heidelberg, New-York, Springer 1994.

\bibitem{TY} Tauvel, P.,  and R.W.T Yu, 
 {\em Affine Slice for the Coadjoint Action of a Class of Biparabolic Subalgebras of a Semisimple Lie Algebra}, 
Algebr. Represent. Theor.
DOI : 10.1007/s10468-012-9335-5.

\bibitem{Y} Yakimova, O.,
 {\em A counterexample to Premet's and Joseph's conjectures},
  Bull. Lond. Math. Soc. {\bf 39} (2007), no. 5, 749--754.

\bibitem{Y1} Yakimova, O.,
 {\em Symmetric invariants of $\mathbb Z_2$-contractions and other semi-direct products}, 
 Int. Math. Res. Notices (2016), DOI : 10.1093/imrn/rnv381.

\bibitem{Y2} Yakimova, O.,
{\em Some semi-direct products with free algebras of symmetric invariants}, 

 Proceedings of ``Perspectives in Lie Theory'', to appear.

\end{thebibliography}
\end{document}